\documentclass{article}
\usepackage{amsmath}
\usepackage{amsfonts}
\usepackage{amssymb}
\usepackage{graphicx}
\graphicspath{ {./images/} }

\usepackage{amsthm}
\usepackage{enumerate}

\begin{document}

\newtheorem{Thm}{Theorem}[section]
\newtheorem{Ass}{Assumption}
\newtheorem{Lem}[Thm]{Lemma}
\newtheorem{Prop}[Thm]{Proposition}
\newtheorem{Cor}[Thm]{Corollary}
\newtheorem{Def}[Thm]{Definition}
\newtheorem{Ex}[Thm]{Example}
\newtheorem{Rem}[Thm]{Remark}

\newcommand{\hr}{\mathbb{H}^2 \times \mathbb{R}}
\newcommand{\la}{\left\langle} 
\newcommand{\ra}{\right\rangle} 
\newcommand{\e}{\epsilon}
\newcommand{\tM}{\widetilde{M}}
\newcommand{\tg}{\tilde{g}}
\newcommand{\tsff}{\tilde{h}}
\newcommand{\tp}{\tilde{p}}
\newcommand{\tG}{\widetilde{\Gamma}}
\newcommand{\tN}{\widetilde{\nabla}}
\newcommand{\tQ}{\widetilde{Q}}
\newcommand{\tL}{\widetilde{L}}
\newcommand{\ok}{\overline{\kappa}}
\newcommand{\uk}{\underline{\kappa}}
\newcommand{\csl}{\partial_{\theta}^2 + \alpha^{-2}E}
\newcommand{\R}{\mathbb{R}}

\title{Constant Mean Curvature $\frac{1}{2}$ Surfaces in $\hr$}
\author{Murray Christian}
\date{October 2019}

\maketitle

\begin{abstract}
This thesis lies in the field of constant mean curvature (cmc) hypersurfaces and specifically cmc $1/2$ surfaces in the three-manifold $\hr$. The value $1/2$ is the critical mean curvature for $\hr$, in that there do no exist closed cmc surfaces with mean curvature $1/2$ or less. Daniel and Hauswirth have constructed a one-parameter family of complete, cmc $1/2$ annuli that are symmetric about a reflection in the horizontal plane $\mathbb{H}^2 \times \{0\}$, the \emph{horizontal catenoids}. In this thesis we prove that these catenoids converge to a singular limit of two tangent horocylinders as the neck size tends to zero. We discuss the analytic gluing construction that this fact suggests, which would create a multitude of cmc $1/2$ surfaces with positive genus.

The main result of the thesis concerns a key step in such an analytic gluing construction. We construct families of cmc $1/2$ annuli with boundary, whose single end is asymptotic to an end of a horizontal catenoid. We produce these families by solving the mean curvature equation for normal graphs off the end of a horizontal catenoid. This is a non-linear boundary value problem, which we solve by perturbative methods. To do so we analyse the linearised mean curvature operator, known as the Jacobi operator. We show that on carefully chosen weighted H\"older spaces the Jacobi operator can be inverted, modulo a finite-dimensional subspace, and provided the neck size of the horizontal catenoid is sufficiently small. Using these linear results we solve the boundary value problem for the mean curvature equation by a contraction mapping argument. 
\end{abstract}

\newpage

\section*{Acknowledgements}

I would especially like to thank my supervisor Jesse Ratzkin for all the mathematical advice, optimism and encouragement, and for his patience during the generous gestation of this thesis. Thanks to Rob Kusner for his interest in this project and the discussions about it. This thesis has benefitted from his remarks and those of the other examiners, Karsten Grosse-Brauckmann and Laurent Hauswirth.

I gratefully acknowledge the financial support of an Innovation Scholarship from the National Research Foundation.

\newpage 

\tableofcontents

\newpage

\section{Introduction}

We begin this thesis with a brief discussion of the field of minimal and constant mean curvature surfaces in the classical setting of $\R^3$. One of the central topics in the field has been the construction and classification of complete, embedded, non-compact examples of such surfaces. We focus on the rather general gluing techniques that have been brought to bear on the construction problem. A central component of these is the analytic construction of spaces of ends, by solving boundary value problems for the non-linear mean curvature equation. The main result of our thesis is a result of this type, in the context of constant mean curvature $1/2$ surfaces in $\hr$. In section \ref{cmchtwor} we discuss some of the recent developments in the theory of these surfaces, before stating our main result, giving an overview of the thesis, and mentioning some future work.  

\subsection{Minimal and constant mean curvature surfaces in $\R^3$}

\subsubsection*{Mean curvature and least area}

Minimal and constant mean curvature surfaces are solutions to the problem of finding surfaces of least area. The classical Plateau problem asks one to find a surface having the smallest possible area amongst all surfaces with boundary a prescribed closed curve. A minimiser for this problem has vanishing mean curvature, and in general one calls a surface with mean curvature zero at all points a minimal surface. On the other hand, surfaces that minimise area subject to an enclosed volume constraint must have constant, non-zero mean curvature. These \emph{minimal} and \emph{constant mean curvature} (cmc) surfaces represent idealised models for soap films and soap bubbles, respectively.

Minimal and cmc surfaces are in many ways distinguished objects. One indication of this is the fact that, locally, they are solutions to a quasilinear, elliptic, partial differential equation (pde). More precisely, if one parametrises such a surface locally as a graph over its tangent plane, the graphing function $u$ satisfies the pde
\begin{equation}  div \left(\frac{\nabla u}{\sqrt{1 + |\nabla u|^2}} \right) = 2H, \label{MCEforgraphs} \end{equation}
where $H$ is the mean curvature of the surface. Such a pde enjoys a maximum principle, with the following striking geometric consequence. Suppose two surfaces having the same constant mean curvature meet tangentially at a point in such a way that their unit normals coincide, and, locally, one surface lies entirely on one side of the other. Then the surfaces coincide in a neighbourhood of that point. In fact, because solutions to \eqref{MCEforgraphs} are real-analytic, the surfaces are identical if they're connected. 

\subsubsection*{Complete, embedded, non-compact examples}

As a consequence of the maximum principle, it's not hard to see that there cannot exist closed (i.e. compact boundaryless) minimal surfaces in $\R^3$. Indeed supposing such a surface existed, one could take a plane disjoint from the surface and parallel translate it until it makes a first, tangential point of contact with the minimal surface. Since the plane has vanishing mean curvature, the argument of the previous paragraph implies that the plane and our putative compact surface are identical, a contradiction.On the other hand, the solution to Plateau's problem by Douglas and Rado guarantees that there is an abundance of compact minimal surfaces with boundary in $\R^3$. 

The minimal surfaces we'll be most interested in are the complete, embedded, non-compact surfaces which have finite topology, in the sense that they are homeomorphic to a compact surface with finitely many punctures. The central examples, apart from flat planes, are the catenoid and helicoid. The catenoid is a surface of revolution, homeomorphic to a sphere with two punctures. The helicoid is ruled by lines, and homeomorphic to a once-punctured sphere. The helicoid and catenoid were discovered in the $1700$'s, and for a long time it was not known if there exist complete, embedded minimal surfaces of finite topology and positive genus.  Only in the $1980$'s did Costa discover such a surface, with genus one and three ends. This and subsequent examples with higher genus have been constructed using the Enneper-Weierstrass representation. This representation produces minimal surfaces from complex-analytic data, but is peculiar to mean curvature zero, and other methods are required for other cmc surfaces. 


As in the case of minimal surfaces, there are rather limited possibilities for closed, embedded cmc surfaces in $\mathbb{R}^3$. In fact the only possibilities are round spheres, as was shown by Alexandrov using his ingenious method of moving planes. The argument used above to show that there are no closed minimal surfaces in $\mathbb{R}^3$ is a moving planes argument, and illustrates some of the general features, particularly the use of a maximum principle for an elliptic pde. As far as complete examples are concerned, the most important embedded cmc surfaces are the Delaunay unduloids. These are a one-parameter family of rotationally invariant, periodic  surfaces that interpolate between a cylinder and a singular limit of tangentially intersecting spheres whose centers lie along a line. The most powerful method for producing new examples of embedded cmc surfaces has so far proved to be analytic gluing techniques, which were pioneered in this setting by Kapouleas \cite{Kap1}, \cite{Kap}. We turn to these now.

\subsubsection*{Gluing and spaces of ends}

The Delauney unduloids and catenoids play much deeper roles in the theory of cmc and minimal surfaces than simply that of examples. This is because they form the asymptotic models for embedded annular ends of their respective types of surfaces. Indeed Schoen \cite{Sch83} proved that every embedded, minimal end is asymptotic to a catenoid or a plane, and it was later shown that every embedded annular cmc end is asymptotic to the end of a Delaunay surface \cite{KKS}. Therefore, one knows that any complete embedded minimal or cmc surface one may construct will have ends that are perturbations of these asymptotic models. This suggests a procedure for building new examples by attaching perturbations of these model ends to central building blocks. 

One variant of this gluing procedure is the method of Cauchy data matching developed by Mazzeo, Pacard, Pollack and others. In \cite{MP} the authors construct new examples of complete, embedded cmc surfaces by attaching Delaunay ends to certain truncated minimal surfaces called $k$-noids. Since this nicely illustrates the method we describe it in some detail. 

To begin with we'd like to give some motivation for why such a construction may be possible. The crucial ingredient is the one-parameter family of minimal catenoids: mean curvature scales like $\e^{-1}$ under a homothety by $\e > 0$, thus all homotheties of the catenoid $\mathcal{C}$ are also minimal surfaces. The parameter $\e$ is the necksize of the corresponding catenoid $\e \mathcal{C}$. The importance of this family is that it provides asymptotic models both for (non-planar) ends of $k$-noids, \emph{and} for neck regions of Delaunay unduloids.  
The compact, central building blocks in \cite{MP} are truncations of $k$-noids with asymptotically catenoidal ends. To each end one associates a necksize parameter $a_i$ which is that of the catenoid to which that end is asymptotic. Similarly any Delaunay surface has a necksize parameter $\e$, and as $\e \rightarrow 0$, neighbourhoods of the neck regions are asymptotic to the catenoids $\e \mathcal{C}$. Thus if one shrinks the $k$-noid sufficiently, each of its ends may be closely matched to a neck region of the unduloid that shares the same catenoidal model, or in other words, the same necksize parameter. 

The method of Cauchy data matching consists of producing families of cmc $1$ surfaces close to each of the building blocks in the construction. 
This is done by solving the Dirichlet problem for the mean curvature equation for normal graphs off these pieces. The family is then parametrised by the boundary data one can prescribe for this problem. Since one is only interested in surfaces close to the given models, the solution of the non-linear equation is perturbative, after inverting the linearisation of the mean curvature equation. The next step is then to show that it is possible to find a member of each family such that, at each interface, the boundaries match to zeroth and first order. The  resulting surface is then a continuously differentiable weak solution to the cmc $1$ equation, and hence by elliptic regularity, a smooth cmc $1$ surface. 

\subsection{Minimal and cmc $1/2$ surfaces in $\hr$} \label{cmchtwor}


Whilst many deep open problems remain in the study of cmc surfaces in $\R^3$, the field has also expanded to other ambient three-manifolds, in particular the eight homogeneous geometries of Thurston. To put these in context, recall that in two dimensions, the constant curvature geometries $\R^2, \mathbb{S}^2$ and $\mathbb{H}^2$ play something of a universal role. Indeed, every compact, orientable two-manifold is homeomorphic to a quotient of one of these spaces by a discrete group of isometries. In other words, each of these \emph{topological} spaces admits a canonical \emph{geometry}. The famous Geometrisation Conjecture of Thurston asserts that something analogous is true in three dimensions; of course, this conjecture has now been proven through the work of Hamilton and Perelman. Loosely speaking, the Geometrisation Theorem asserts that every compact, orientable three-manifold can be topologically decomposed in such a way that each piece admits a canonical geometry. More precisely, each piece is homeomorphic to the quotient of a 'model' geometry by a discrete group of isometries. Unlike the two-dimensional case where the models have constant curvature, there are eight model geometries, which constitute all the simply-connected, homogeneous three-manifolds. They are

\[ \R^3, \hspace{5pt} \mathbb{S}^3, \hspace{5pt} \mathbb{H}^3, \hspace{5pt} \mathbb{S}^2 \times \R, \hspace{5pt} \mathbb{H}^2 \times \R, \hspace{5pt} \widetilde{SL_2(\R)}, \hspace{5pt} Nil_3 \mbox{ and } \hspace{1pt} Sol. \]

The constant curvature spaces $\R^3, \mathbb{S}^3$ and $\mathbb{H}^3$ possess the maximum possible symmetry, their isometry groups are six-dimensional. The isometry groups of the product spaces $\mathbb{S}^2 \times \R$ and $\mathbb{H}^2 \times \R$ are four-dimensional, as are those of the universal cover $\widetilde{SL_2(\R)}$ of $SL_2(\R)$, and the Heisenberg group $Nil_3$. The least symmetric of the eight model geometries is the Lie group $Sol$, which has a three-dimensional isometry group. See \cite{S} for further details. 


Before we focus on $\hr$ we would like to mention two highlights of the theory of cmc surfaces in the Thurston geometries, which extend results on cmc surfaces in space forms to the homogenous manifolds with four-dimensional isometry group. One of the first such results was the discovery by Abresch and Rosenberg \cite{AR} of a holomorphic quadratic differential on cmc surfaces in $\mathbb{S}^2 \times \R$ and $\hr$, which generalises the classical Hopf differential for cmc surfaces in $\R^3$ (and the other space forms). The most famous application of Hopf's differential is the classification of immersed cmc spheres as round spheres. Using their holomorphic quadratic differential, \cite{AR} classified the immersed cmc spheres in $\mathbb{S}^2 \times \R$ and $\hr$: they are all embedded and rotationally invariant. For a survey of other global results in these spaces, including analogs of Alexandrov's classification of compact, embedded cmc surfaces, and Bernstein's classification of entire minimal graphs, see \cite{FM}.

Another important result, due to Daniel \cite{Dan}, is the solution of the isometric embedding problem for surfaces in three-dimensional manifolds with four-dimensional isometry group. In the space forms the Gauss and Codazzi equations are necessary and sufficient conditions for a surface to be locally isometrically embedded with prescribed shape operator. Due to the reduced symmetry in  $\mathbb{S}^2 \times \R, \mathbb{H}^2 \times \R, \widetilde{SL_2(\R)}$ and $Nil_3$, the Gauss and Codazzi equations must be augmented by two further equations relating to the structure of these spaces as fibrations over a two-dimensional space form. Using these integrability conditions, Daniel generalised the Lawson correspondence between cmc surfaces in the space forms to this setting. For example, under Daniel's correspondence every simply-connected, immersed, minimal surface in $Nil_3$ is isometric to a simply-connected, immersed, cmc $1/2$ surface in $\hr$, and a conformal parametrisation of one of these gives rise to a conformal parametrisation of the other.  



\subsubsection*{Minimal surfaces in $\hr$}

One of the pioneering works on cmc surfaces in $\hr$ was that of Nelli and Rosenberg \cite{NR}, in which they produced many examples of complete minimal surfaces. These included a family of rotationally invariant annuli, or catenoids, and a family of screw motion invariant surfaces generated by a horizontal geodesic, that is, helicoids. They proved that, given any $\mathcal{C}^1$ vertical graph in the asymptotic boundary $\partial_{\infty} \mathbb{H}^2 \times \mathbb{R}$, there is a unique, entire minimal graph asymptotic to the given curve. They also proved a Jenkins-Serrin theorem, showing that over suitable domains there exist minimal graphs with infinite boundary data on some portions of the boundary, and arbitrary continuous boundary data on others. Lastly, in conjunction with the work of Collin and Rosenberg \cite{CR}, many Scherk-type surfaces have been constructed. These are minimal graphs over domains bounded by finite polygons or ideal polygons with $2n$ sides, which have asymptotic boundary values alternating between $\infty$ and $-\infty$ on the edges of the polygon. 

In addition to the helicoids just mentioned, Sa Earp and Toubiana \cite{S-T05} constructed surfaces invariant under a screw motion but whose generating curve is not a horizontal geodesic. In \cite{S-T08} these authors also constructed disc-type minimal surfaces invariant under hyperbolic translations. 

The last examples of minimal surfaces we wish to mention are the positive genus surfaces constructed using gluing techniques by Martin, Mazzeo and Rodriguez \cite{MMR}. The key building blocks in their construction are the horizontal minimal catenoids constructed independently by Pyo \cite{Pyo} and  Morabito and Rodriguez \cite{MoR}. These embedded minimal annuli have ends asymptotic to vertical geodesic planes, and finite total curvature. In fact, Hauswirth, Nelli, Sa Earp and Toubiana \cite{HNST} proved that these are the unique complete, immersed, finite total curvature, minimal surfaces having two ends asymptotic to vertical planes; a result analogous to Schoen's \cite{Sch83} characterisation of minimal catenoids in $\mathbb{R}^3$. 

The gluing procedure of \cite{MMR} relies on the geometry of the ends of the horizontal catenoids through the following key observation. If two horizontal catenoids each have an end asymptotic to a common vertical plane, and if the distance between their necks is sufficiently large, then these two ends will be very close to each other, and an approximately minimal surface can be constructed by gluing the ends together with a small cutoff function.  

Now consider a collection $\mathcal{G}$ of disjoint geodesics in $\mathbb{H}^2$, and a collection $\mathcal{A}$ of geodesic arcs connecting pairs of geodesics in $\mathcal{G}$ in such a way that each arc in $\mathcal{A}$ realises the distance between the pair it connects. Replace each arc in $\mathcal{A}$ with a horizontal catenoid that is asymptotic to the vertical planes over the geodesics in $\mathcal{G}$ that it connects. Using cut-off functions to glue these catenoids together, they create an approximately minimal surface. If the catenoids' necks are all sufficiently far away from each other, then the approximately minimal surface can be perturbed to a minimal surface. By considering different configurations of geodesics $\mathcal{G}$ and connecting arcs $\mathcal{A}$, the authors construct surfaces of any positive genus. In order to  keep the neck separation sufficiently large, they require many ends for each handle they attach.




\subsubsection*{Constant mean curvature $1/2$ surfaces in $\hr$.}

Apart from minimal surfaces, there is a second distinguished class of cmc surfaces in $\hr$, those with cmc $1/2$. The value $1/2$ is the critical mean curvature of $\hr$, that is, the largest value of the mean curvature for which there are no closed cmc surfaces. This can be seen by an argument similar to the one for minimal surfaces in $\mathbb{R}^3$, once one has the appropriate analogue of a plane, namely a horocylinder. A horocylinder is a vertical cylinder over a horocycle in $\mathbb{H}^2$. Since horocycles have constant curvature $1$, the horocylinders have constant mean curvature $1/2$. Like parallel planes in $\mathbb{R}^3$, horocylinders foliate $\hr$. Now, if there was a closed cmc $1/2$ surface in $\hr$, then there would exist a horocylinder that makes one-sided, tangential contact with it, with coinciding normal vector. Once again, the geometric maximum principle implies that the surfaces coincide, a contradiction. On the other hand, for every $H > 1/2$ there do exist closed cmc $H$ surfaces, for example the rotational spheres. 

There are many examples of complete cmc $1/2$ surfaces in $\hr$. These include screw motion invariant surfaces, and both embedded and immersed rotationally invariant annuli that are symmetric with respect to reflection in a horizontal plane \cite{S-T05}. Another important example is the hyperboloid, which is an entire, rotationally invariant vertical graph whose height function tends to infinity on the end of the surface. Nelli and Sa Earp \cite{NS} proved a half-space theorem with respect to the hyperboloid: the only complete, properly immersed cmc $1/2$ surfaces that lie on the mean convex side of a hyperboloid are vertical translations of the hyperboloid. A different half-space theorem was proved by Hauswirth, Rosenberg and Spruck \cite{HRS}, with respect to horocylinders. Their result is that a complete, properly immersed cmc $1/2$ surface contained in the mean convex side of a horocylinder must be another horocylinder.

A fair amout is known about vertical graphs of cmc $1/2$. Hauswirth, Rosenberg and Spruck \cite{HRS09} proved a Jenkins-Serrin theorem for them, and \cite{ENS} constructed examples by solving exterior Dirichlet problems, obtaining surfaces whose ends are contained between two hyperboloids. Cartier and Hauswirth \cite{CH} showed that it is possible to pertub the ends of the hyperboloids to obtain entire graphs whose horizontal distance from a hyperboloid in a slice $\mathbb{H}^2 \times \{t\}$ and direction $\theta$ is given, in the limit as $t \rightarrow \infty$, by an essentially arbitrary, small function of $\theta$.  Entire examples have been constructed using the harmonic Gauss map of Fernandez and Mira \cite{FM07} and a result of \cite{HRS}, and the entire examples have been completely classified (see \cite{FM}). 

Moving away from vertical graphs, Plehnert has constructed examples with dihedral symmetry using a conjugate Plateau construction \cite{P1}, \cite{P2}. This method proceeds by solving Plateau problems for minimal surfaces in the Heisenberg group $Nil_3$, then using the sister correspondence of Daniel \cite{Dan} to obtain cmc $1/2$ surfaces in $\hr$, which are extended by Schwarz reflection. Genus one surfaces with $k$-ends, and genus zero $k$-noids have been constructed this way. 

Lastly we mention the cmc $1/2$ surfaces that are the topic of this thesis. Using the Gauss map of \cite{FM07}, Daniel and Hauswirth  constructed a family of complete, properly embedded cmc $1/2$ annuli whose axes lie in a horizontal plane; the horizontal catenoids \cite{DH}. The family is parametrised by their necksizes. We discuss their geometry and the small necksize limit in detail in section \ref{DHshoricats}.








\subsection{Main results and outline of the thesis}

The main result of this thesis is the construction of families of cmc $1/2$ annular ends asymptotic to the ends of horizontal catenoids. Let $\Sigma_{\e}$ denote the horizontal catenoid of necksize $\e$, and $\Sigma_{\e}^c$ the truncation of the end (to be defined precisely later). Stated somewhat imprecisely, our result is

\begin{Thm}
There exists $\e_0 >0$ such that for all $\e \in (0, \e_0)$, there is a projection
$\pi''_{\e}$ on $\mathcal{C}^{2,\alpha}(\partial \Sigma_{\e}^c)$ with two-dimensional kernel, and the following property. For each $\phi \in \pi''_{\e}\mathcal{C}^{2,\alpha}(\partial \Sigma_{\e}^c)$ there is a unique decaying solution to the boundary value problem for the mean curvature operator
\[ \left \{ \begin{array}{cc} \mathcal{M}_{\e}(w) = 1/2 & \mbox{ on } \Sigma_{\e}^c \\ \pi''_{\e}w = \phi & \mbox{ on } \partial \Sigma_{\e}^c \end{array} \right. \]
\end{Thm}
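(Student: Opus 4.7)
The plan is to cast the CMC $1/2$ equation for normal graphs over $\Sigma_\e^c$ as a perturbation of its linearization at zero and solve the resulting equation by a fixed point argument. Writing $\mathcal{M}_\e(w) - 1/2 = L_\e w + Q_\e(w)$, where $L_\e$ is the Jacobi operator of $\Sigma_\e$ and $Q_\e$ collects the quadratic and higher order terms in $w$ and its derivatives, the problem becomes
\[ L_\e w = -Q_\e(w), \qquad \pi''_\e w|_{\partial \Sigma_\e^c} = \phi. \]
The whole scheme hinges on producing a bounded right inverse $G_\e$ for $L_\e$, subject to the boundary projection $\pi''_\e$, on a carefully chosen pair of weighted H\"older spaces, with operator norm uniform in $\e$.

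For the linear theory I would first identify the asymptotic model of $L_\e$ on the end. Since the horizontal catenoid end is asymptotic to a horocylinder, in appropriate cylindrical coordinates $(s,\theta)$ the Jacobi operator differs from a translation invariant model $\partial_s^2 + \partial_\theta^2 + V_\infty$ by terms decaying in $s$. Separation of variables in $\theta$ reduces this model to a sequence of ODEs whose indicial roots govern the Fredholm properties of $L_\e$ on exponentially weighted spaces; choosing a decay weight $e^{-\delta s}$ lying generically between two consecutive indicial roots, a finite number of low Fourier modes on the boundary must be excluded. These are precisely the modes spanned by $\ker \pi''_\e$, and the expected dimension two reflects the bounded Jacobi fields coming from the vertical translation and the horizontal translation transverse to the catenoid's axis. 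Patching a parametrix on the end to a local Dirichlet solution operator near $\partial \Sigma_\e^c$ then produces a right inverse
\[ G_\e : \pi''_\e \mathcal{C}^{2,\alpha}(\partial \Sigma_\e^c) \times \mathcal{C}^{0,\alpha}_\delta(\Sigma_\e^c) \to \mathcal{C}^{2,\alpha}_\delta(\Sigma_\e^c). \]

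With $G_\e$ in hand and its norm bounded independently of $\e$, the nonlinear problem becomes the fixed point equation $w = G_\e(\phi, -Q_\e(w))$ on a small ball $B_r \subset \mathcal{C}^{2,\alpha}_\delta(\Sigma_\e^c)$. Because $Q_\e$ is smooth in its arguments and vanishes to second order at $w = 0$, it satisfies a Lipschitz estimate of the shape $\|Q_\e(w_1) - Q_\e(w_2)\|_{0,\alpha;\delta} \le C r \|w_1 - w_2\|_{2,\alpha;\delta}$ on $B_r$, provided $\delta$ is chosen so that products of decaying functions remain in the target space. Picking $r$ small in terms of $\|G_\e\|$, $\|\phi\|$ and $C$ makes the map $w \mapsto G_\e(\phi, -Q_\e(w))$ a contraction, and Banach fixed point delivers the unique decaying solution. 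The main obstacle, and the technical heart of the argument, will be establishing the $\e$-uniform invertibility of $L_\e$: the geometry of $\Sigma_\e$ degenerates as $\e \to 0$ toward two tangent horocylinders, so the weighted H\"older norms must be designed to mix decay along the end with appropriate scale factors near the neck, and any potential blow-up of $\|G_\e\|$ must be ruled out --- either by direct separation of variables estimates or, in the style of \cite{MP} and \cite{MMR}, by a contradiction argument that extracts a nontrivial bounded Jacobi field on a limit geometry and then contradicts a nondegeneracy property there.
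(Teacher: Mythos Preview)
Your overall architecture---linearize, build a right inverse on weighted H\"older spaces modulo a finite-dimensional boundary defect, then contract---matches the paper. But the claim that the right inverse has operator norm \emph{uniform in $\e$}, which you identify as ``the main obstacle,'' is both false and unnecessary, and this is where your plan diverges from what actually works.

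In the paper the Green operator $\mathcal{G}_\e$ satisfies only $\|\mathcal{G}_\e\|\le c\,\e^{-1}$ (Proposition~\ref{GreenOp}), and this blow-up is unavoidable: with weight $\mu=-2$ the second indicial root $\gamma_{2,\e}$ approaches $2$ from above as $\e\to 0$, so the denominator $\gamma_{2,\e}^2-4$ in the mode-by-mode estimates is of order $\e$. The contraction still closes because the right-hand side gains a compensating factor of $\e^2$. Concretely, after factoring out the conformal prefactor the equation reads
\[
L_\e w = -2(\cosh s)^{-2}w + \e^2\cosh^2 s\, Q_0(w),
\]
and on the truncated end $s\ge S_\e$ one has $\cosh s\ge \e^{-1}$, so both terms on the right pick up $\e^2$ in the weighted norm. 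The net contraction constant is $c\,\e^{-1}\cdot\e^2 = c\,\e$. Your scheme, which aims for a uniform bound on $G_\e$ and treats the nonlinearity as simply ``quadratic,'' misses this bookkeeping; a contradiction/blow-up argument of the type you suggest would in fact produce a nontrivial limit and fail. The specific choice $\mu=-2$ is also not generic but forced: one needs $\mu\le -2$ so that the nonlinearity maps $\mathcal{C}^{2,\alpha}_\mu$ back to $\mathcal{C}^{0,\alpha}_\mu$, yet $\mu>\gamma_{-2,\e}$ to keep the boundary defect two-dimensional.

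A smaller point: your attribution of the two-dimensional kernel of $\pi''_\e$ to vertical and horizontal translations is off. The paper imposes invariance under the reflection $\theta\mapsto\pi-\theta$ in $\mathbb{H}^2\times\{0\}$, which eliminates the $\cos\theta$ mode (vertical translation). The surviving low modes are $\psi_{0,\e}$ and $\psi_{1,\e}=\sin\theta$, corresponding geometrically to hyperbolic dilation/necksize variation and hyperbolic translation/rotation respectively.
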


In Section $2$ we introduce two models of $\hr$, and Daniel and Hauswirth's original parametrisation of the horizontal catenoids. We then map this to the upper half plane model of $\hr$, and reparametrise the catenoids with the coordinates that we will perform our analysis with. 

Our approach to constructing the families of cmc $1/2$ ends is perturbative. In section $3$ we expand the mean curvature operator on normal graphs in the form
\[ \mathcal{M}_{\e}(w) = 1/2 + \mathcal{J}_{\e}w + \mathcal{Q}_{\e}(w). \]
The linearisation $\mathcal{J}_{\e}$ is the well-known Jacobi operator, and our primary focus is to understand the structure of the non-linear operator $\mathcal{Q}_{\e}$ which collects the quadratic and higher order terms in $w$ and its derivatives. In particular we show that the coefficients of $\mathcal{Q}_{\e}$ are functions that are bounded on the ends of the catenoids, independently of $\epsilon$. To obtain these results we begin with an expansion of the mean curvature equation in a more general setting using Fermi coordinates. Then we incorporate information on the geometry of $\hr$ and the horizontal catenoids to establish the final estimates. 

Section $4$ is devoted to analysing the linear Jacobi operator. We begin by calculating an explicit expression for the operator in our chosen coordinates, which brings out a close similarity to the Jacobi operator on minimal catenoids in Euclidean space. This similarity allows us to obtain explicit expressions for several Jacobi fields that arise from geometric deformations of the horizontal catenoids.  Our two main linear results provide solutions of homogeneous and inhomogeneous boundary value problems for the Jacobi operator with estimates. This is done in the setting of weighted H\"older spaces, using eigenfunction expansions. 

With the linear analysis in place, Section $5$ contains the solution of the boundary value problems for the mean curvature operator, by reformulating it as a fixed point problem. Our results on the structure of the non-linear term allow us to show that for small neck-sizes, the relevant operator is a contraction on a suitable small ball, proving existence of the desired cmc surfaces.

The final Section $6$ outlines a gluing construction for cmc $1/2$ surfaces in $\hr$, in which the space of ends asymptotic to horizontal catenoids would play an important role. We discuss the proposed construction, as well the technical obstacles that have prevented us from completing it.




\newpage

\section{Constant mean curvature $\frac{1}{2}$ horizontal catenoids in $\hr$}

\subsection{Models of $\hr$}

We will use two models for $\hr$, corresponding to the upper half-plane and disc models of the hyperbolic plane. Hauswirth and Rosenberg parametrise the horizontal catenoids in the disc model of $\hr$, but we will map the parametrisation to the upper half plane model and conduct all our analysis there. 

By the upper half plane model of $\hr$ we mean, of course, the upper half-plane model of hyperbolic space crossed with the real line. This is the set $\{(x,y,z) \in \mathbb{R}^3: y > 0\}$ endowed with the product metric $g_{\hr}$, which in these coordinates takes the form
\[ g_{\hr} = \frac{dx^2 + dy^2}{y^2} + dz^2. \]

On the other hand, the disk model is the set $\{(\tilde{x}, \tilde{y},z) \in \mathbb{R}^3: \tilde{x}^2 + \tilde{y}^2 < 1\}$ with the metric 
\[ g_{\hr} = 4\frac{d\tilde{x}^2 + d\tilde{y}^2}{(1 - \tilde{x}^2 - \tilde{y}^2)^2} + dz^2. \]

In both models the coordinate $z$ parametrises the $\mathbb{R}$ factor, which we shall refer to as the \emph{vertical} factor, in contrast to the \emph{horizontal} hyperbolic factor. Similarly, there is an orthogonal direct sum decomposition of the tangent bundle as 
\[ T(\hr) \cong T\mathbb{H}^2 \oplus T\mathbb{R}. \] Vectors lying in the first factor will be called \emph{horizontal}, whilst those in the second factor are \emph{vertical}. The horizontal and vertical components of a tangent vector $X$ we denote by $X_h$ and $X_v$ .  

The isometries we will use to relate the two models act trivially on the vertical factor. On the horizontal factor they are the M\"obius transformations that exchange the ordered triples $(i, 0, -i)$ and $(\infty, i, 0)$ , when we identify the pairs $(x,y)$ and $(\tilde{x}, \tilde{y})$ with the complex variables $\xi = x + iy$ and $\tilde{\xi} = \tilde{x} + i\tilde{y}$ respectively. Explicitly, these maps are given by
\begin{equation} \xi = i \left( \frac{i + \tilde{\xi}}{i - \tilde{\xi}}\right) \mbox{ mapping $(i,0,-i)$ to $(\infty, i, 0)$, and }  \label{balltouhp}\end{equation}
\[ \tilde{\xi} = i\left(\frac{\xi - i}{\xi + i}\right) \mbox{ mapping $(\infty, i, 0)$ to $(i,0,-i)$.} \]

The isometry group of $\hr$ is isomorphic to the product of isometry groups of $\mathbb{H}^2$ and $\mathbb{R}$. The isometry group of the upper half plane model of $\mathbb{H}^2$ is generated by three families of orientation-preserving isometries, 
as well as an orientation-reversing inversion. The orientation preserving families are
parabolic translations
\[ \tau_a(x,y) = (x + a, y), \hspace{5pt} a \in \mathbb{R}, \]
hyperbolic dilations
\[ \delta_a(x,y) = (ax, ay), \hspace{5pt} a > 0, \]
elliptic rotations about $(0,1)$,
\[ \rho_a(x,y) = \frac{\left(\frac{1}{2}(1 - x^2 - y^2)\sin a + x \cos a, y \right)}{(x^2 + y^2)\sin^2(a/2) + \cos^2(a/2) - x\sin a}, \hspace{5pt} a \in [0,2\pi], \]
whereas inversion in the semi-circle $x^2 + y^2 = 1, y > 0$ reverses orientation. Each of the one-parameter families of translations, dilations and rotations give rise to a Killing field, and these are responsible for a certain degeneracy of the linear operators we will study. The hyperbolic plane is homogeneous and isotropic, so the next result is not surprising.

\begin{Lem}\label{seccurv}
The sectional curvature $K_{\hr}(P)$ of a plane $P \subset T_p(\hr)$ is given in terms of a unit normal vector $\nu$ to $P$ by
\[ K_{\hr}(P) = -\la \nu, \partial_z \ra^2 \]
\end{Lem}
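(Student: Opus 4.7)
The plan is to exploit the product structure of $\hr$ together with the fact that $\mathbb{H}^2$ has constant sectional curvature $-1$ and $\R$ is flat. Recall that for a Riemannian product $M_1 \times M_2$ the curvature tensor decomposes as $R(X,Y)Z = R_1(X_h, Y_h)Z_h + R_2(X_v, Y_v)Z_v$, so here only the horizontal hyperbolic factor contributes. If $\{X, Y\}$ is an orthonormal basis of $P$ with horizontal parts $X_h, Y_h$, then using the standard constant-curvature expression for $R_1$ I get
\[ K_{\hr}(P) = \la R(X,Y)Y, X \ra = \la R_1(X_h, Y_h) Y_h, X_h \ra = -\bigl( |X_h|^2 |Y_h|^2 - \la X_h, Y_h \ra^2 \bigr). \]
So the problem reduces to identifying this ``squared horizontal area'' with $\la \nu, \partial_z \ra^2$.

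Next I would carry out this identification by a short Pythagorean computation. Writing $X = X_h + \la X, \partial_z \ra \partial_z$ and $Y = Y_h + \la Y, \partial_z \ra \partial_z$, and using that $X \perp Y$ and $|X| = |Y| = 1$, a direct application of Lagrange's identity gives
\[ |X_h|^2 |Y_h|^2 - \la X_h, Y_h \ra^2 = 1 - \la X, \partial_z \ra^2 - \la Y, \partial_z \ra^2. \]
Then I would complete $\{X, Y\}$ to an orthonormal basis $\{X, Y, \nu\}$ of $T_p(\hr)$ and expand $\partial_z$ in this basis: since $|\partial_z| = 1$, we have
\[ \la X, \partial_z \ra^2 + \la Y, \partial_z \ra^2 + \la \nu, \partial_z \ra^2 = 1. \]
Substituting this into the previous identity yields $|X_h|^2 |Y_h|^2 - \la X_h, Y_h \ra^2 = \la \nu, \partial_z \ra^2$, and combining with the first display gives the claim.

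There is no real obstacle here; this is essentially a linear-algebra computation once one has the product curvature formula. The only thing to double-check carefully is the sign convention in $R_1$ for $\mathbb{H}^2$ with curvature $-1$, and the fact that the formula is manifestly independent of the choice of orientation for $\nu$, since only $\la \nu, \partial_z \ra^2$ appears.
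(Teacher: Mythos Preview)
Your proof is correct and follows essentially the same approach as the paper: both use the product curvature formula, the constant-curvature expression for $\mathbb{H}^2$, and a Pythagorean expansion of $\partial_z$ in the orthonormal basis $\{X,Y,\nu\}$. The only minor tactical difference is that the paper first chooses $X$ horizontal (orthogonal in $P$ to the projection of $\partial_z$), which collapses $|X_h|^2|Y_h|^2 - \la X_h,Y_h\ra^2$ to $|Y_h|^2$ immediately, whereas you carry out the equivalent computation for an arbitrary orthonormal basis via the Lagrange identity.
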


\begin{proof}
The product structure of $\hr$ and the fact that $\mathbb{R}$ has zero curvature imply that the curvature tensor only depends on the horizontal components of its arguments. Furthermore the horizontal factor has constant curvature $-1$, so the curvature tensor of $\hr$ is given by
\[ R(X,Y,Z,W) = R(X_h,Y_h,Z_h,W_h) = - (\la X_h, W_h\ra \la Y_h, Z_h \ra - \la X_h, Z_h \ra \la Y_h, W_h \ra). \]
Given the plane $P$, choose an orthonormal basis $X,Y$ for $P$ where $X$ is horizontal; this is always possible as we can choose $X$ orthogonal (in P) to the projection of $\partial_z$ onto $P$. With this choice of basis we have
\[ K_{\hr}(P) = R(X,Y,Y,X) = - (|X_h|^2|Y_h|^2 - \la X_h, Y_h \ra^2) = -|Y_h|^2. \]
Expressing the vertical vector $\partial_z$ in terms of the orthonormal basis $X,Y, \nu$ we have $\partial_z = \la Y, \partial_z \ra Y + \la \nu, \partial_z \ra \nu$, so Pythagorous' theorem implies that
\[ \la \nu, \partial_z \ra^2 = 1 - \la Y, \partial_z \ra^2 = 1 - |Y_v|^2 = |Y_h|^2.  \]
Combining the last two equations gives the result. \qedhere
\end{proof}

\begin{Lem} \label{secplusricci}
Let $\nu$ be a unit normal for $P$. Then $K_{\hr}$ and the Ricci tensor of $\nu$ are related by \[ K_{\hr}(P) + Ric(\nu)  \equiv - 1. \] 
\end{Lem}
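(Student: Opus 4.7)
The plan is to compute $\mathrm{Ric}(\nu)$ directly by summing two sectional curvatures, and then apply Lemma \ref{seccurv} to each of them. I would reuse exactly the orthonormal frame $\{X,Y,\nu\}$ from the previous proof, where $X,Y$ span $P$ and $X$ is horizontal. The standard trace formula for the Ricci tensor, combined with the antisymmetry that kills the $R(\nu,\nu,\nu,\nu)$ term, gives
\[ \mathrm{Ric}(\nu) = R(X,\nu,\nu,X) + R(Y,\nu,\nu,Y) = K_{\hr}(\mathrm{span}(\nu,X)) + K_{\hr}(\mathrm{span}(\nu,Y)). \]

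Next I would evaluate each of these sectional curvatures by picking obvious unit normals. A unit normal to $\mathrm{span}(\nu,X)$ is $\pm Y$, so Lemma \ref{seccurv} gives $K_{\hr}(\mathrm{span}(\nu,X)) = -\la Y, \partial_z\ra^2 = -|Y_v|^2$. A unit normal to $\mathrm{span}(\nu,Y)$ is $\pm X$; since $X$ is horizontal, $\la X,\partial_z\ra = 0$, and this sectional curvature vanishes. Therefore $\mathrm{Ric}(\nu) = -|Y_v|^2$.

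To close the loop, I would use the unit-length relation $|Y_v|^2 = 1 - |Y_h|^2$ together with the intermediate identity $|Y_h|^2 = -K_{\hr}(P)$ that appears inside the proof of Lemma \ref{seccurv}. This yields $|Y_v|^2 = 1 + K_{\hr}(P)$, so $\mathrm{Ric}(\nu) = -1 - K_{\hr}(P)$, which rearranges to the stated identity.

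I don't expect any real obstacle here; the whole argument is a short bookkeeping exercise once the adapted frame from the previous lemma is in place. The only point that requires a moment's care is the choice to take $X$ horizontal, since it is precisely this that makes one of the two auxiliary planes automatically have a horizontal unit normal and thus vanishing sectional curvature.
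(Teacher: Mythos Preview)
Your argument is correct, but it takes a different route from the paper. You compute $\mathrm{Ric}(\nu)$ directly in the adapted frame $\{X,Y,\nu\}$ inherited from the proof of Lemma~\ref{seccurv}, applying that lemma to the two planes $\mathrm{span}(\nu,X)$ and $\mathrm{span}(\nu,Y)$ and then combining with the intermediate identity $K_{\hr}(P)=-|Y_h|^2$. The paper instead observes that $K_{\hr}(P)+\mathrm{Ric}(\nu)$ is the sum of sectional curvatures of three mutually orthogonal planes, which is half the scalar curvature and hence independent of the planes chosen; it then evaluates this constant using the coordinate planes spanned by $\partial_x,\partial_y,\partial_z$. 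The paper's argument is frame-free and never invokes Lemma~\ref{seccurv}, while yours is a direct continuation of that lemma's computation; both are short, but the paper's version makes clear that the identity is really the statement that the scalar curvature of $\hr$ equals $-2$.
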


\begin{proof}
To see this note that $K_{\hr}(P) + Ric(\nu)$ is the sum of sectional curvatures of three orthogonal planes (specifically $P$ and any two orthogonal planes containing $\nu$). However $K_{\hr} + Ric(\nu)$ is also half the scalar curvature, so the sum is independent of the orthogonal planes chosen. Choosing the three planes spanned by each pair of the coordinate vectors $\partial_x, \partial_y$, $\partial_z$ we obtain the given value. \qedhere
\end{proof}

\subsection{The horizontal catenoids}\label{DHshoricats}

In this section we describe our parametrisation of the horizontal catenoids (Proposition \ref{stparm}). This is  obtained from the original parametrisation of Daniel and Hauswirth by, firstly, mapping it from the ball to the half-plane model of $\hr$, and secondly, making a change of coordinates motivated by the resulting form of the Jacobi operator. Thereafter we discuss their geometry and calculate some explicit formulae for various curvatures that we will use in computing the Jacobi operators. 

The original, conformal parametrization of the horizontal catenoids obtained by \cite{DH} is given in Proposition \ref{DHparm}. We begin by describing the parameters and functions needed to define the parametrisation. For details on the origin of these, see \cite{DH}.

The one-parameter family of catenoids is indexed by $\alpha > 0$. We define equivalent parameters
\begin{equation} \alpha_* := \sqrt{\alpha^2 + 1} \mbox{ and } \epsilon := \frac{\alpha_*}{\alpha} - 1. \label{epsilon} \end{equation}
The length of the shortest closed geodesic on the horizontal catenoid with parameter $\alpha$ is on the order of $\epsilon$, so we may think of $\epsilon$ as a neck-size parameter. We will use both $\alpha$ and $\epsilon$ as convenient. Typically our results hold for all large $\alpha$, or equivalently for all sufficiently small $\e > 0$. 

\begin{Def}\label{aux}
The functions $\varphi(u)$ and $\varphi_*(u)$ are  defined as the solutions to the initial value problems
\begin{equation}
(\varphi')^2 = \alpha^2 + \cos^2 \varphi, \hspace{5pt} \varphi(0) = 0, \varphi'(0) < 0, 
\end{equation}
\begin{equation}
(\varphi_*')^2 = \alpha_*^2 - \cos^2 \varphi_*, \hspace{5pt} \varphi_*(0) = 0, \varphi_*'(0) < 0.
\end{equation}
We also define 
\[ f(u) = \frac{\alpha \cos \varphi_* - \alpha_* \cos \varphi}{\alpha \cos \varphi \cos^2 \varphi_*}. \]
\end{Def}

Now we can state the original parametrization of the horizontal catenoids. 

\begin{Prop}[Horizontal catenoids of \cite{DH}]\label{DHparm}
For each $\alpha > 0$ there is a constant mean curvature $1/2$ horizontal catenoid parametrised in the ball model of $\hr$  by
\[ \widetilde{\mathbf{X}}_{\alpha}(u,v) = \left(\frac{X_1}{1 + X_3}, \frac{X_2}{1 + X_3}, z \right) \hspace{5pt} \mbox{ where } \]
\begin{eqnarray*}
X_1(u,v) & := &  \cosh(\alpha v) \sin \varphi_*(u) f(u) \\
X_2(u,v) & := & \cosh(\alpha v)\sinh(\alpha_*v)\left(f(u) + \frac{\alpha_*}{\alpha}\right) - \cosh(\alpha_*v)\sinh(\alpha v) \\
X_3(u,v) & := & \cosh(\alpha v)\cosh(\alpha_*v)\left(f(u) + \frac{\alpha_*}{\alpha}\right) - \sinh(\alpha_*v)\sinh(\alpha v) \\
z(u,v) & := & \frac{\cos \varphi \cosh(\alpha v)}{\alpha(\alpha - \varphi')}.
\end{eqnarray*}
The metric in these coordinates is
\[ \frac{\cosh^2(\alpha v)}{(\alpha - \varphi')^2}(du^2 + dv^2). \]
\end{Prop}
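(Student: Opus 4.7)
The plan is to follow Daniel--Hauswirth's derivation via the harmonic Gauss map of Fernandez--Mira. To any simply connected cmc $1/2$ surface in $\hr$ one associates a harmonic map $G : \Sigma \to \mathbb{H}^2$, and conversely a non-constant harmonic map satisfying a mild non-degeneracy condition determines the surface uniquely up to vertical translation. The horizontal catenoids are singled out among such surfaces by requiring invariance under a one-parameter group of hyperbolic translations along a horizontal geodesic, which forces a corresponding translation symmetry on $G$. Under this symmetry reduction the harmonic map equation collapses to a single ODE in the variable $u$, namely $(\varphi')^2 = \alpha^2 + \cos^2\varphi$ for an appropriate phase $\varphi$, with $\varphi_*$ playing the analogous role for the target. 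The explicit reconstruction formula of Fernandez--Mira then produces the particular combinations $X_1, X_2, X_3, z$ and the factor $f$ appearing in the statement.

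For a self-contained verification of the conclusions I would instead check them a posteriori. First I would confirm that $(X_1, X_2, X_3)$ lies on the Minkowski hyperboloid $X_3^2 - X_1^2 - X_2^2 = 1$, so that the stereographic expression $(X_1 + iX_2)/(1+X_3)$ genuinely lands in the unit disc. The difference $X_3^2 - X_2^2$ factors as $(X_3 - X_2)(X_3 + X_2)$, and the $v$-dependence telescopes through $\cosh(\alpha_* v) \pm \sinh(\alpha_* v) = e^{\pm \alpha_* v}$, reducing the hyperboloid constraint to the single algebraic identity $(f + \alpha_*/\alpha)^2 - f^2 \sin^2 \varphi_* = 1$. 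This in turn follows by direct expansion from the definition of $f$ together with $\alpha_*^2 - \alpha^2 = 1$.

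Second I would compute the induced metric. Pulling back $-dX_3^2 + dX_1^2 + dX_2^2 + dz^2$, differentiating each $X_i$ in $u$ and $v$, and using the ODEs $(\varphi')^2 = \alpha^2 + \cos^2\varphi$ and $(\varphi_*')^2 = \alpha_*^2 - \cos^2\varphi_*$ to eliminate squared derivative terms, the aim is to show the $du\,dv$ cross term cancels and the $du^2$ and $dv^2$ coefficients both equal $\cosh^2(\alpha v)/(\alpha - \varphi')^2$. Once conformality is established, the cmc $1/2$ property is checked by computing a unit normal and the second fundamental form in $(u,v)$ coordinates; in a conformal parametrisation $2H$ is the trace of the second fundamental form divided by the conformal factor, and the principal curvatures should turn out to have clean expressions in $\cos\varphi$ and $\varphi'$ that sum to $1$. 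Annular topology and the reflection symmetry across $\{z = 0\}$ then follow from the manifest $v \mapsto -v$ symmetry of the formulas together with periodicity of $\varphi$.

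The main obstacle is purely computational. Every simplification leans on the non-elementary defining ODEs for $\varphi$ and $\varphi_*$ and on the precise algebraic form of $f$, so without the conceptual guidance of the Fernandez--Mira construction it is not transparent why these particular combinations conspire to yield a conformal immersion of constant mean curvature exactly $1/2$.
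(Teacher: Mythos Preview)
The paper does not prove this proposition at all: it is stated as a quotation of the parametrisation obtained by Daniel and Hauswirth in \cite{DH}, with no argument given. So there is no ``paper's own proof'' to compare against. Your sketch of the derivation via the Fernandez--Mira harmonic Gauss map is indeed how \cite{DH} constructs these surfaces, so your first paragraph accurately reflects the origin of the formulas.

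Your proposed a posteriori verification is reasonable but partially redundant with what the paper does next: the hyperboloid identity $X_3^2 - X_1^2 - X_2^2 = 1$ that you reduce to $(f + \alpha_*/\alpha)^2 - f^2\sin^2\varphi_* = 1$ is exactly Lemma~\ref{XID}, proved there by the same manipulation you describe (and relying on the identities of Lemma~\ref{identityvarphi}). The metric computation you outline in the hyperboloid model is a legitimate route, though the paper simply quotes the conformal factor from \cite{DH} rather than rederiving it.

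One small correction: the $v \mapsto -v$ symmetry does \emph{not} give the reflection across $\{z = 0\}$, since $z(u,v)$ is even in $v$. In the disc model $v \mapsto -v$ sends $X_2 \mapsto -X_2$ and fixes $X_1, X_3, z$, which is reflection in the geodesic $\tilde{y} = 0$ (see Proposition~\ref{catsymmetries}). The $\{z = 0\}$ reflection corresponds instead to a symmetry in the $u$ variable (roughly $\varphi \mapsto \pi - \varphi$). This does not affect the cmc or metric claims, but your attribution of the symmetries is off.
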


Before we can map this to the upper half plane we need some preliminary calculations to obtain reasonably simple expressions when we do so.

\begin{Lem}\label{identityvarphi}
The following identities relate $\varphi$ and $\varphi_*$:
\[ -\varphi' \cos \varphi_* = \alpha_* \cos \varphi, \hspace{10pt} -\varphi' \sin \varphi_* = \alpha \sin \varphi. \]
These allow us to replace $\cos \varphi_*$ and $\sin \varphi_*$ with functions of $\varphi$ in Lemma \ref{XID} below. 
\end{Lem}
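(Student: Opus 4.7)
The plan is to reduce both identities to a single uniqueness statement for an ordinary differential equation. First I would record a preliminary observation that enables dividing by $\varphi'$: from $(\varphi')^2 = \alpha^2 + \cos^2 \varphi \geq \alpha^2 > 0$ and $(\varphi_*')^2 = \alpha_*^2 - \cos^2\varphi_* \geq \alpha_*^2 - 1 = \alpha^2 > 0$, neither derivative ever vanishes; combined with the prescribed initial signs, both $\varphi'$ and $\varphi_*'$ remain strictly negative for all $u$.

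Next I would define a candidate angle $\psi(u)$ by the proposed formulas,
\[ \cos \psi(u) := -\frac{\alpha_* \cos \varphi(u)}{\varphi'(u)}, \qquad \sin \psi(u) := -\frac{\alpha \sin \varphi(u)}{\varphi'(u)}, \]
and check that this is consistent: using $\alpha_*^2 = \alpha^2 + 1$ one computes
\[ \cos^2 \psi + \sin^2 \psi = \frac{\alpha_*^2 \cos^2 \varphi + \alpha^2 \sin^2 \varphi}{(\varphi')^2} = \frac{\alpha^2 + \cos^2 \varphi}{(\varphi')^2} = 1, \]
so $\psi$ is a well-defined smooth angle function, and we may fix $\psi(0) = 0$ since $\cos \psi(0) = 1$, $\sin \psi(0) = 0$. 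The whole point of the lemma is then the equality $\psi \equiv \varphi_*$.

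To prove that equality I would show $\psi$ satisfies the same initial value problem as $\varphi_*$. Differentiating the first defining relation and using $\varphi'' = -\cos\varphi \sin\varphi$ (obtained by differentiating $(\varphi')^2 = \alpha^2 + \cos^2\varphi$), a short calculation gives $\psi' = \alpha \alpha_* / \varphi'$. Squaring this and using $(\varphi')^2 - \cos^2 \varphi = \alpha^2$ together with the definition of $\cos \psi$ yields
\[ (\psi')^2 = \frac{\alpha^2 \alpha_*^2}{(\varphi')^2} = \alpha_*^2 - \alpha_*^2 \frac{\cos^2 \varphi}{(\varphi')^2} = \alpha_*^2 - \cos^2 \psi, \]
which is exactly the ODE defining $\varphi_*$. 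Since $\psi'(0) = \alpha\alpha_*/\varphi'(0) = -\alpha = \varphi_*'(0)$, both $\psi$ and $\varphi_*$ pick out the same branch $y' = -\sqrt{\alpha_*^2 - \cos^2 y}$, and because this right-hand side is bounded away from zero (and hence smooth as a function of $y$), standard uniqueness for the initial value problem forces $\psi \equiv \varphi_*$. Unwinding the definitions of $\cos \psi$ and $\sin \psi$ then gives both identities simultaneously.

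There is no real obstacle here; the only step that warrants care is the sign analysis in the first paragraph, which guarantees that $\varphi'$ is nowhere zero and that the square-root branch for the $\varphi_*$ equation is unambiguous. The computation of $\psi'$ is a routine one-line derivative, and the uniqueness invocation is standard.
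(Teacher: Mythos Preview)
Your proof is correct. Both your argument and the paper's reduce the lemma to a uniqueness statement for an ODE, but they do so for different quantities. The paper shows that the two functions $A_1 := -\varphi'/\cos\varphi$ and $A_2 := \alpha_*/\cos\varphi_*$ satisfy the same first-order IVP $(A')^2 = (A^2-1)(A^2-\alpha_*^2)$, $A(0)=\alpha_*$, hence coincide; this yields the first identity directly, and the second is then deduced algebraically. You instead package both proposed identities into a single candidate angle $\psi$, verify $\cos^2\psi+\sin^2\psi=1$, and show $\psi$ satisfies the defining IVP for $\varphi_*$ itself. Your route proves both identities in one stroke and works entirely with quantities whose denominators ($\varphi'$) are nowhere zero, whereas the paper's auxiliary function $-\varphi'/\cos\varphi$ has poles at the zeros of $\cos\varphi$ and so, strictly speaking, requires either a local argument near $u=0$ or a passage to the reciprocal; the paper sidesteps this by citing \cite{DH}.
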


\begin{proof}
This is proved in Lemma $8.1$ of \cite{DH}. Using the Definition \ref{aux} it is straightforward to verify that the functions $\frac{-\varphi'}{\cos \varphi}$ and $\frac{\alpha_*}{\cos \varphi_*}$ both solve the initial value problem
\[ (A')^2 = (A^2 - 1)(A^2 - \alpha_*^2), \hspace{5pt} A(0) = \alpha_*. \] This gives the first identity, and the second follows from the first using the definitions \ref{aux} and standard trigonometric identities. \qedhere
\end{proof}

\begin{Lem}\label{XID}
The functions $X_1, X_2$ and $X_3$ satisfy the identity
\[ X_3^2 - 1 = X_1^2 + X_2^2. \]
\end{Lem}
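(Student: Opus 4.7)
The plan is to reduce the identity to a purely one-variable statement about $\varphi,\varphi_*,f$ by exploiting the simple hyperbolic structure in $v$, and then conclude using Definition \ref{aux} together with Lemma \ref{identityvarphi}.

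First I would introduce the abbreviation $A(u) := f(u) + \alpha_*/\alpha$, so that
\[ X_2 = \cosh(\alpha v)\sinh(\alpha_* v)A - \cosh(\alpha_* v)\sinh(\alpha v), \qquad X_3 = \cosh(\alpha v)\cosh(\alpha_* v)A - \sinh(\alpha_* v)\sinh(\alpha v). \]
Computing the factorization $X_3^2 - X_2^2 = (X_3-X_2)(X_3+X_2)$, the combinations $\cosh(\alpha_* v)\mp\sinh(\alpha_* v) = e^{\mp\alpha_* v}$ collapse the $v$-dependence nicely, giving
\[ X_3 - X_2 = e^{-\alpha_* v}\bigl[\cosh(\alpha v)A + \sinh(\alpha v)\bigr], \qquad X_3 + X_2 = e^{\alpha_* v}\bigl[\cosh(\alpha v)A - \sinh(\alpha v)\bigr], \]
so that $X_3^2 - X_2^2 = \cosh^2(\alpha v)A^2 - \sinh^2(\alpha v) = \cosh^2(\alpha v)(A^2-1) + 1$. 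Since $X_1^2 = \cosh^2(\alpha v)\sin^2\varphi_* \, f^2$, the desired identity is equivalent to
\[ A^2 - 1 = f^2 \sin^2\varphi_*, \]
a pointwise identity in $u$ only.

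Next, I would use $\alpha_*^2 - \alpha^2 = 1$ to expand $A^2-1 = f^2 + 2f\alpha_*/\alpha + 1/\alpha^2$, so that the remaining task is to verify
\[ f^2\cos^2\varphi_* + \frac{2\alpha_*}{\alpha} f + \frac{1}{\alpha^2} = 0. \]
The definition of $f$ gives the clean rearrangement $f\cos\varphi_* = \sec\varphi - (\alpha_*/\alpha)\sec\varphi_*$, and substituting this (after squaring the first term and expanding the cross term via the definition of $f$ again) the mixed $\sec\varphi\sec\varphi_*$ terms cancel and one is left with
\[ \sec^2\varphi - \frac{\alpha_*^2}{\alpha^2}\sec^2\varphi_* + \frac{1}{\alpha^2} = 0, \]
i.e.\ $\alpha_*^2/\cos^2\varphi_* = (\alpha^2+\cos^2\varphi)/\cos^2\varphi$.

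To finish, I would apply Lemma \ref{identityvarphi}, which gives $\alpha_*^2/\cos^2\varphi_* = (\varphi')^2/\cos^2\varphi$, and then invoke the defining ODE $(\varphi')^2 = \alpha^2 + \cos^2\varphi$ from Definition \ref{aux}, which is exactly what is needed. The main (though very modest) obstacle is purely bookkeeping: keeping track of the cross terms when squaring $f\cos\varphi_*$ and ensuring the $\sec\varphi\sec\varphi_*$ contributions cancel cleanly; once organised this way the identity reduces to the ODE for $\varphi$ and is immediate.
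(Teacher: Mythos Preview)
Your proof is correct and follows essentially the same strategy as the paper: separate the $v$-dependence to reduce to the one-variable identity $(f+\alpha_*/\alpha)^2 - 1 = f^2\sin^2\varphi_*$, then verify this using the definition of $f$ and Lemma~\ref{identityvarphi}. Your factorization $X_3^2-X_2^2=(X_3-X_2)(X_3+X_2)$ with the collapse $\cosh(\alpha_*v)\mp\sinh(\alpha_*v)=e^{\mp\alpha_*v}$ is a slightly cleaner way to handle the $v$-variable than the direct expansion the paper uses, and your endgame reduces to the ODE $(\varphi')^2=\alpha^2+\cos^2\varphi$ via one squared identity from Lemma~\ref{identityvarphi}, whereas the paper rewrites the final step as $\alpha_*^2\cos^2\varphi\sin^2\varphi_*=\alpha^2\cos^2\varphi_*\sin^2\varphi$ and invokes both identities; these are minor differences in bookkeeping rather than substance.
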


\begin{proof}
We'll show that $X_3^2 - X_1^2 - X_2^2 = 1$. Expanding the squares and using hyperbolic trigonometric identities we get
\[ X_3^2 - X_1^2 - X_2^2 = \cosh^2(\alpha v) \left\{ \left(f(u) + \frac{\alpha_*}{\alpha}\right)^2 - \sin^2 \varphi_*(u)f^2(u)\right\} - \sinh^2(\alpha v). \]
Hence the claimed formula will follow once we show that the quantity in $\{ \cdots \}$ is identically one. Direct calculation from the expression for $f$ (see Definition \ref{aux}) shows that this is equivalent to the identity
\[ \alpha_*^2\cos^2 \varphi \sin^2 \varphi_* = \alpha^2 \cos^2 \varphi_* \sin^2 \varphi \]
and the validity of this expression can be seen by using the identities of Lemma \ref{identityvarphi} to replace $\sin^2 \varphi_*$ on the left hand side and $\cos^2 \varphi_*$ on the right hand side. \qedhere
\end{proof}

Now we are in a position to map the parametrisation in Definition \ref{DHparm} to the upper half plane. The isometry we will use acts trivially on the $\mathbb{R}$ factor, so we only need to compute its effect on $(\tilde{x}, \tilde{y})$. In terms of real and imaginary parts, the map \eqref{balltouhp} of the ball model to the upper half plane model is given by
\begin{equation} (x,y) = \left(\frac{2\tilde{x}}{\tilde{x}^2 + (\tilde{y} - 1)^2}, \frac{1 - \tilde{x}^2 - \tilde{y}^2}{\tilde{x}^2 + (\tilde{y} - 1)^2}\right). \label{balltouhpcoord} \end{equation} 
Note that this is \emph{not} the most commonly used map between these models; it is our choice because it places the ''axis'' of the catenoids along the $x = 0$ axis, and the ends at $0 \in \mathbb{R}$ and $\infty$. 
\begin{Lem}
Composing $\widetilde{\mathbf{X}}_{\alpha}$ with the mapping \eqref{balltouhp} to the upper half plane we have  
\[ (x,y) = \left(\frac{X_1}{X_3 - X_2}, \frac{1}{X_3 - X_2} \right). \]
\end{Lem}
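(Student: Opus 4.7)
The plan is to prove this by a direct substitution into the coordinate formula \eqref{balltouhpcoord}, with the key simplification provided by Lemma \ref{XID}.

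First I would record the ball-model coordinates of $\widetilde{\mathbf{X}}_{\alpha}$: by Proposition \ref{DHparm} we have $\tilde{x} = X_1/(1+X_3)$ and $\tilde{y} = X_2/(1+X_3)$. Substituting these into the two denominators appearing in \eqref{balltouhpcoord} gives
\[ \tilde{x}^2 + (\tilde{y} - 1)^2 = \frac{X_1^2 + X_2^2 - 2X_2(1+X_3) + (1+X_3)^2}{(1+X_3)^2}, \]
and similarly
\[ 1 - \tilde{x}^2 - \tilde{y}^2 = \frac{(1+X_3)^2 - (X_1^2 + X_2^2)}{(1+X_3)^2}. \]

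Next I would apply Lemma \ref{XID} to replace $X_1^2 + X_2^2$ by $X_3^2 - 1 = (X_3-1)(X_3+1)$ in both numerators. For the first, this yields
\[ (1+X_3)(X_3 - 1) - 2X_2(1+X_3) + (1+X_3)^2 = (1+X_3)\bigl(2X_3 - 2X_2\bigr), \]
so $\tilde{x}^2 + (\tilde{y}-1)^2 = 2(X_3 - X_2)/(1+X_3)$. For the second, $(1+X_3)^2 - (X_3^2 - 1) = 2(1+X_3)$, so $1 - \tilde{x}^2 - \tilde{y}^2 = 2/(1+X_3)$.

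Finally I would substitute these two simplified expressions, together with $\tilde{x} = X_1/(1+X_3)$, into the formulae for $x$ and $y$ in \eqref{balltouhpcoord}. The factors of $2$ and $(1+X_3)$ cancel, leaving exactly $x = X_1/(X_3 - X_2)$ and $y = 1/(X_3 - X_2)$. There is no real obstacle here beyond keeping track of the algebra; the entire content of the lemma is that the identity $X_3^2 - 1 = X_1^2 + X_2^2$ forces the Möbius map to take the clean form claimed.
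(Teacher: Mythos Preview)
Your proof is correct and follows essentially the same approach as the paper: substitute $\tilde{x} = X_1/(1+X_3)$, $\tilde{y} = X_2/(1+X_3)$ into \eqref{balltouhpcoord}, use Lemma \ref{XID} to simplify $\tilde{x}^2 + (\tilde{y}-1)^2$ and $1 - \tilde{x}^2 - \tilde{y}^2$, and read off the result. The only difference is that you spell out the intermediate factorisation a little more explicitly.
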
 

\begin{proof}
Given that $\tilde{x} = \frac{X_1}{1 + X_3}$ and $\tilde{y} = \frac{X_2}{1 + X_3}$, the identity of Lemma \ref{XID} gives
\[\tilde{x}^2 + (\tilde{y} - 1)^2 = \frac{X_1^2 + X_2^2 - 2X_2(1 + X_3) + (1 + X_3)^2}{(1 + X_3)^2} = \frac{2(X_3 - X_2)}{1 + X_3}. \] Another application of the identity gives
\[ 1 - \tilde{x}^2 - \tilde{y}^2 = \frac{(1 + X_3)^2 - (X_1^2 + X_2^2)}{(1 + X_3)^2} = \frac{2}{1 + X_3}. \]
Therefore the claim follows from \eqref{balltouhpcoord}. \qedhere
\end{proof}

\begin{Lem}
The denominators $X_3 - X_2$ are given by
\[ X_3 - X_2 = e^{-(\alpha_* - \alpha)v}\left(1 + e^{-\alpha v}\cosh(\alpha v)\left(f + \frac{\alpha_*}{\alpha} - 1\right)\right). \]
\end{Lem}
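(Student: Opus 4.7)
The proof is a direct computation from the definitions of $X_2$ and $X_3$ in Proposition \ref{DHparm}. The plan is to substitute those formulas, extract a common hyperbolic factor, and then reshape the remaining bracketed expression using the identity $\cosh(\alpha v) + \sinh(\alpha v) = e^{\alpha v}$.

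First I would take the difference
\[ X_3 - X_2 = \cosh(\alpha v)\bigl(\cosh(\alpha_* v) - \sinh(\alpha_* v)\bigr)\left(f + \frac{\alpha_*}{\alpha}\right) + \sinh(\alpha v)\bigl(\cosh(\alpha_* v) - \sinh(\alpha_* v)\bigr), \]
which regroups the four terms in the definitions of $X_2$ and $X_3$ around the common factor $\cosh(\alpha_* v) - \sinh(\alpha_* v) = e^{-\alpha_* v}$. Unlike Lemma \ref{XID}, no identities from Lemma \ref{identityvarphi} or properties of $f$ are needed here; the function $f$ just rides along as a parameter.

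Next I would write
\[ X_3 - X_2 = e^{-\alpha_* v}\left[\cosh(\alpha v)\left(f + \frac{\alpha_*}{\alpha}\right) + \sinh(\alpha v)\right] \]
and rearrange the bracket by adding and subtracting $\cosh(\alpha v)$:
\[ \cosh(\alpha v)\left(f + \frac{\alpha_*}{\alpha}\right) + \sinh(\alpha v) = \cosh(\alpha v)\left(f + \frac{\alpha_*}{\alpha} - 1\right) + \bigl(\cosh(\alpha v) + \sinh(\alpha v)\bigr), \]
and the last parenthesis is simply $e^{\alpha v}$.

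Finally I would factor $e^{\alpha v}$ out of the bracket, combining it with the prefactor $e^{-\alpha_* v}$ to produce $e^{-(\alpha_* - \alpha)v}$, which gives exactly the claimed expression. There is no real obstacle; the only thing to watch is the bookkeeping when adding and subtracting $\cosh(\alpha v)$ so that the $-1$ lands in the right place inside the final bracket.
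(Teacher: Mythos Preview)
Your proof is correct and follows essentially the same route as the paper: the paper expands the hyperbolic functions directly as sums of exponentials to reach the intermediate form $\frac{e^{-(\alpha_* - \alpha)v}}{2}\bigl(f + \frac{\alpha_*}{\alpha} + 1\bigr) + \frac{e^{-(\alpha_* + \alpha)v}}{2}\bigl(f + \frac{\alpha_*}{\alpha} - 1\bigr)$, whereas you use the identities $\cosh - \sinh = e^{-x}$ and $\cosh + \sinh = e^{x}$ to arrive at the same factorisation. The two computations are equivalent bookkeeping of the same elementary identity.
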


\begin{proof}
Expanding the hyperbolic trigonometric functions in Proposition \ref{DHparm} as sums of exponentials we obtain
\[X_3 - X_2 = \frac{e^{-(\alpha_* - \alpha)v}}{2}\left(f + \frac{\alpha_*}{\alpha} + 1 \right) + \frac{e^{-(\alpha_* + \alpha)v}}{2}\left(f + \frac{\alpha_*}{\alpha} - 1 \right), \] from which the claimed formula follows straightforwardly.\qedhere 
\end{proof}

\begin{Lem}
\[ f + \frac{\alpha_*}{\alpha} - 1 = \frac{\sin^2 \varphi}{\alpha_*(\alpha + \alpha_*)(\alpha - \varphi')(\alpha_* - \varphi')}. \]
\end{Lem}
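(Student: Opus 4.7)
The plan is a direct, if somewhat delicate, algebraic manipulation of the definition of $f$, using Lemma \ref{identityvarphi} and the ODEs of Definition \ref{aux}.

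First I would combine the terms on the left over the common denominator $\alpha \cos\varphi \cos^2\varphi_*$, obtaining
\[
f + \tfrac{\alpha_*}{\alpha} - 1
= \frac{\alpha\cos\varphi_* - \alpha_*\cos\varphi\sin^2\varphi_* - \alpha\cos\varphi\cos^2\varphi_*}{\alpha\cos\varphi\cos^2\varphi_*},
\]
after using $1-\cos^2\varphi_* = \sin^2\varphi_*$ on the cross-term. Next I would use Lemma \ref{identityvarphi} to replace $\cos\varphi_* = -\alpha_*\cos\varphi/\varphi'$ and $\sin\varphi_* = -\alpha\sin\varphi/\varphi'$ throughout, so that $\varphi_*$ is eliminated entirely. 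After factoring the common $-\alpha\alpha_*\cos\varphi/(\varphi')^2$ from the numerator and cancelling against the denominator $\alpha\alpha_*^2\cos^3\varphi/(\varphi')^2$, the identity reduces to
\[
f + \tfrac{\alpha_*}{\alpha} - 1 = \frac{-\bigl[\varphi' + \alpha\sin^2\varphi + \alpha_*\cos^2\varphi\bigr]}{\alpha_*\cos^2\varphi}.
\]

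Now the essential step is to factor the numerator $-(\varphi' + \alpha\sin^2\varphi + \alpha_*\cos^2\varphi)$ in a way that produces the two factors $(\alpha-\varphi')$ and $(\alpha_*-\varphi')$ claimed on the right. The key observation is that the ODE $(\varphi')^2 = \alpha^2 + \cos^2\varphi$, together with $\alpha_*^2 = \alpha^2 + 1$, yields the useful factorisations
\[
\cos^2\varphi = (\varphi'-\alpha)(\varphi'+\alpha), \qquad \sin^2\varphi = (\alpha_*-\varphi')(\alpha_*+\varphi').
\]
Thus the target identity is equivalent (after clearing denominators and cancelling the factors $(\alpha-\varphi')(\alpha_*-\varphi')$ against those coming from $\cos^2\varphi\sin^2\varphi$) to the polynomial identity
\[
(\alpha+\alpha_*)\bigl[\varphi' + \alpha\sin^2\varphi + \alpha_*\cos^2\varphi\bigr] = (\alpha+\varphi')(\alpha_*+\varphi').
\]
Expanding the right-hand side using $(\varphi')^2 = \alpha^2 + \cos^2\varphi$ and collecting terms, everything cancels except a residual $(\alpha_*^2 - \alpha^2 - 1)\cos^2\varphi$, which vanishes by definition of $\alpha_*$.

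The only mildly tricky bit is keeping track of signs, since $\varphi'(0)<0$ makes $\varphi'$ negative on a neighbourhood of $0$ and one must be careful when factoring quantities like $\cos^2\varphi = (\varphi')^2-\alpha^2$. Otherwise the proof is purely computational, driven by the defining ODEs and the relation $\alpha_*^2-\alpha^2=1$.
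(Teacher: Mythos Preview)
Your proposal is correct and follows exactly the approach the paper indicates: the paper's proof reads simply ``By calculation, using Definition \ref{aux} and Lemma \ref{identityvarphi}, we omit the details,'' and you have supplied precisely those details. Your key observations --- the factorisations $\cos^2\varphi = (\varphi')^2 - \alpha^2$ and $\sin^2\varphi = \alpha_*^2 - (\varphi')^2$ coming from the ODE and the relation $\alpha_*^2 = \alpha^2 + 1$ --- are exactly what drives the computation, and the final polynomial identity checks out as you describe.
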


\begin{proof}
By calculation, using Definition \ref{aux} and Lemma \ref{identityvarphi}, we omit the details. \qedhere
\end{proof}

\subsubsection*{New coordinates $s$ and $\theta$}

We define coordinates $(s,\theta)$ on the horizontal catenoids by
\[ s := \alpha v \mbox{ and } \theta := - \varphi(u). \]

Making the change of variables from $(u,v)$ to $(s,\theta)$ and putting together the results above we have the parametrisation of the catenoids in the upper half plane that we will use for our analysis.

\begin{Prop}\label{stparm}
For each $\epsilon > 0$ there is a constant mean curvature $1/2$ horizontal catenoid paramatrised in the upper half-plane model of $\hr$ by
\begin{equation} \mathbf{X}_{\alpha}(s,\theta) = \left(\frac{\sin \theta}{\alpha_*(\alpha - \varphi')}(\cosh s)e^{\epsilon s} \omega(s,\theta), e^{\epsilon s}\omega(s,\theta), \frac{\cos \theta}{\alpha(\alpha - \varphi')} \cosh s\right),     \mbox{ where } \label{parm} \end{equation}
\[ \omega(s,\theta) = \left(1 + \frac{\sin^2 \theta}{\alpha_*(\alpha + \alpha_*)(\alpha - \varphi')(\alpha_* - \varphi')} e^{-s} \cosh s\right)^{-1}. \]
The metric in these coordinates is given by 
\begin{equation} \frac{\cosh^2s}{\alpha^2(\alpha - \varphi')^2}\left( ds^2 + \frac{\alpha^2}{(\varphi')^2} d\theta^2\right). \label{stmetric} \end{equation}
\end{Prop}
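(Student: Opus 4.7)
The plan is to assemble the preceding lemmas, substitute $s = \alpha v$ and $\theta = -\varphi(u)$ into the resulting expressions, and perform the corresponding change of variables in the metric. The work splits naturally into verifying the three coordinate components of $\mathbf{X}_\alpha$ and then transforming the metric.

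For the $y$-coordinate, I would combine $y = 1/(X_3 - X_2)$ with the formula for $X_3 - X_2$ and the expression for $f + \alpha_*/\alpha - 1$ from the two lemmas just proved. Using $\epsilon = \alpha_*/\alpha - 1$ so that $(\alpha_* - \alpha)v = \alpha \epsilon v = \epsilon s$, and $\sin^2\varphi = \sin^2\theta$, the quotient collapses immediately to $y = e^{\epsilon s}\omega(s,\theta)$ with $\omega$ as stated. The $z$-coordinate requires nothing more than $\cos\varphi = \cos\theta$ and $s = \alpha v$ to rewrite the formula from Proposition \ref{DHparm}.

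The one substantive step is the $x$-coordinate, where $x = X_1 \cdot y$ and $X_1 = \cosh(\alpha v)\sin\varphi_* \cdot f$. I need the identity
\[ \sin\varphi_* \cdot f \;=\; -\frac{\sin\varphi}{\alpha_*(\alpha - \varphi')}, \]
after which the desired formula for $x$ follows from $\sin\theta = -\sin\varphi$. To prove this I would substitute the explicit form of $f$ from Definition \ref{aux}, then eliminate $\sin\varphi_*$ and $\cos\varphi_*$ using the pair of identities from Lemma \ref{identityvarphi}. After clearing common factors the problem reduces to showing $(\alpha + \varphi')/\cos^2\varphi = -1/(\alpha - \varphi')$, which is precisely the defining ODE $(\varphi')^2 = \alpha^2 + \cos^2\varphi$ rewritten via the factorisation $\cos^2\varphi = -(\alpha+\varphi')(\alpha - \varphi')$. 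This is the only non-mechanical piece of the proof.

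Finally, for the metric, the substitutions $ds = \alpha\, dv$ and $d\theta = -\varphi'\, du$ give $dv^2 = \alpha^{-2}\,ds^2$ and $du^2 = (\varphi')^{-2}\,d\theta^2$; inserting these into the expression $\frac{\cosh^2(\alpha v)}{(\alpha - \varphi')^2}(du^2 + dv^2)$ from Proposition \ref{DHparm} and factoring $\alpha^{-2}$ out of the bracket produces \eqref{stmetric}. No serious obstacle is expected: all the hard trigonometric work has already been done in Lemmas \ref{identityvarphi}--\ref{XID} and the two intermediate lemmas above, and the only non-routine simplification needed here is the one-line reduction using the ODE for $\varphi$.
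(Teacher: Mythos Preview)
Your proposal is correct and follows exactly the route the paper intends: the paper does not give a separate proof but simply states that the proposition follows by ``making the change of variables from $(u,v)$ to $(s,\theta)$ and putting together the results above,'' and you have carried out precisely those substitutions. The one genuine computation you identify---the simplification of $\sin\varphi_* \cdot f$ via Lemma~\ref{identityvarphi} and the ODE $(\varphi')^2 = \alpha^2 + \cos^2\varphi$---is handled correctly.
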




\begin{Prop}\label{catsymmetries}
The catenoids are invariant under a finite group of ambient isometries generated by three reflections. Indeed, reflection in the plane $x = 0$ interchanges $\mathbf{X}_{\alpha}(s,\theta)$ and $\mathbf{X}_{\alpha}(s,-\theta)$, and reflection in the plane $z = 0$ interchanges $\mathbf{X}_{\alpha}(s,\theta)$ and $\mathbf{X}_{\alpha}(s,\pi - \theta)$. Inversion in the semi-circle $x^2 + y^2 = 1, y > 0$ interchanges $\mathbf{X}_{\alpha}(s,\theta)$ and $\mathbf{X}_{\alpha}(-s,\theta)$, 
\end{Prop}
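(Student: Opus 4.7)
The plan is to verify each symmetry directly from the explicit parametrizations. Three preparatory observations streamline everything. First, since $\theta = -\varphi(u)$ and $(\varphi')^2 = \alpha^2 + \cos^2\varphi$ with $\varphi' < 0$, the derivative $\varphi'$ equals $-\sqrt{\alpha^2 + \cos^2\theta}$, a function of $\cos^2\theta$ alone; in particular it is invariant under both $\theta \mapsto -\theta$ and $\theta \mapsto \pi - \theta$. Second, the function $\omega(s,\theta)$ of Proposition \ref{stparm} depends on $\theta$ only through $\sin^2\theta$ and $\varphi'$, so it shares the same invariance. Third, the map \eqref{balltouhp} between the ball and upper half plane models is an isometry, hence conjugates symmetries of the catenoid in one model to symmetries in the other.

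With these observations, the reflections through $x=0$ and $z=0$ are immediate from Proposition \ref{stparm}. Under $\theta \mapsto -\theta$, $\varphi'$ and $\omega$ are unchanged while $\sin\theta$ flips sign and $\cos\theta$ is preserved; the first coordinate of $\mathbf{X}_\alpha$ is therefore negated while the second and third remain fixed, which is precisely reflection in the plane $x=0$. Under $\theta \mapsto \pi - \theta$, $\sin\theta$ is preserved and $\cos\theta$ is negated (with $\varphi', \omega$ still fixed), so only the third coordinate changes sign, matching reflection in $z=0$.

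For the inversion symmetry it is cleaner to return to the ball model parametrization of Proposition \ref{DHparm}. Since $f$ depends on $u$ alone, inspection of the formulas shows that $X_1, X_3$ and $z$ are even functions of $v$ while $X_2$ is odd (the two summands defining $X_2$ combine with opposite signs under $v \mapsto -v$, whereas those defining $X_3$ involve only products that are even in $v$). Hence $v \mapsto -v$ induces the ambient reflection $(\tilde x, \tilde y, z) \mapsto (\tilde x, -\tilde y, z)$ of the ball model, i.e., reflection in the horizontal diameter $\{\tilde y = 0\}$ of the disk factor. A short computation with \eqref{balltouhp} shows that $\tilde\xi = \pm 1$ maps to $\xi = \pm 1$, so this diameter is sent to the geodesic of $\mathbb{H}^2$ with ideal endpoints $\pm 1$, namely the upper semicircle $x^2 + y^2 = 1$. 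Since \eqref{balltouhp} is an isometry, reflection in the disk's diameter conjugates to inversion in this semicircle; combined with $s = \alpha v$ this yields the asserted action $\mathbf{X}_\alpha(s,\theta) \leftrightarrow \mathbf{X}_\alpha(-s,\theta)$. The only place that needs any care is tracking the parity of $\varphi'$ and $\omega$ in $\theta$ and the parity of $X_1,\ldots,X_3$ in $v$; there is no real obstacle beyond bookkeeping.
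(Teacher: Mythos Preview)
Your argument is correct and follows essentially the same route as the paper: the first two reflections are verified directly from the invariance of $\varphi'$ and $\omega$ under $\theta \mapsto -\theta$ and $\theta \mapsto \pi - \theta$, and the inversion symmetry is handled by passing back to the ball model and using the parity of $X_1, X_2, X_3, z$ in $v$. You supply a bit more detail than the paper (explicitly checking the parities and verifying that the diameter $\tilde y = 0$ corresponds to the semicircle $x^2 + y^2 = 1$), but the strategy is identical.
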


\begin{proof}
The first two symmetries follow from the fact that $\omega(s,\theta)$ and $\varphi'(\theta)$ are invariant under $\theta \mapsto - \theta$ and $\theta \mapsto \pi - \theta$. Due to the somewhat complicated formula for inversion in the unit circle, it is more difficult to verify the last symmetry in these coordinates. However, in the original parametrization of the catenoids in the ball model (Proposition \ref{DHparm}), it is easy to see that reflection in the geodesic $\tilde{y} = 0$ interchanges $\widetilde{X}_{\alpha}(u,v)$ and $\widetilde{X}_{\alpha}(u,-v)$, which is exactly the symmetry we claim in our model and coordinates.
\end{proof}

\begin{Lem}\label{seccurvcoord}
Let $K_{\hr}(s,\theta)$ denote the sectional curvature in $\hr$ of the tangent plane to the horizontal catenoids at $\mathbf{X}_{\alpha}(s, \theta)$. Then
\[ K_{\hr}(s,\theta) = -\frac{\cos^2\theta}{\cosh^2 s}. \]
Note the surprising fact that this is independent of $\alpha$. 
\end{Lem}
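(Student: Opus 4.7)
The plan is to apply Lemma \ref{seccurv}, which reduces the computation to finding the \emph{angle function} $\langle \nu, \partial_z \rangle$ of the catenoid. Rather than computing a unit normal directly, I will exploit the orthogonal decomposition
\[ 1 = |\partial_z|^2 = \langle \partial_z, e_1 \rangle^2 + \langle \partial_z, e_2 \rangle^2 + \langle \partial_z, \nu \rangle^2, \]
where $\{e_1, e_2, \nu\}$ is an orthonormal frame adapted to the catenoid. Because the metric \eqref{stmetric} is diagonal in $(s,\theta)$, the tangent vectors $\partial_s \mathbf{X}_\alpha$ and $\partial_\theta \mathbf{X}_\alpha$ are already orthogonal, so after normalization they give $e_1$ and $e_2$. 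Furthermore, since the product metric on $\hr$ does not mix horizontal and vertical directions, the inner product $\langle \partial_z, \partial_s \mathbf{X}_\alpha \rangle$ equals the ordinary partial derivative $\partial_s z$ of the last coordinate of the parametrisation \eqref{parm}, and similarly for $\partial_\theta$.

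The first main step is to compute $\partial_s z$ and $\partial_\theta z$ from $z(s,\theta) = \frac{\cos\theta \cosh s}{\alpha(\alpha - \varphi')}$. The $s$-derivative is immediate. For the $\theta$-derivative one must remember that, once $\theta = -\varphi(u)$ is used as a coordinate, the quantity $\varphi'$ becomes a function of $\theta$ through $(\varphi')^2 = \alpha^2 + \cos^2 \theta$, giving $\frac{d\varphi'}{d\theta} = -\cos\theta\sin\theta/\varphi'$. This is the step where the calculation could bog down, but it collapses cleanly: the numerator reduces to $\alpha\varphi' - (\varphi')^2 + \cos^2\theta = -\alpha(\alpha - \varphi')$ using the defining ODE for $\varphi$, yielding
\[ \partial_\theta z = \frac{\sin\theta \cosh s}{\varphi'(\alpha - \varphi')}. \]
This simplification is the main obstacle and also the key to a clean final answer.

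The second step is to read off the tangent-vector lengths directly from \eqref{stmetric},
\[ |\partial_s \mathbf{X}_\alpha|^2 = \frac{\cosh^2 s}{\alpha^2(\alpha - \varphi')^2}, \qquad |\partial_\theta \mathbf{X}_\alpha|^2 = \frac{\cosh^2 s}{(\varphi')^2(\alpha - \varphi')^2}, \]
so that
\[ \langle \partial_z, e_1 \rangle^2 = \frac{\cos^2\theta \sinh^2 s}{\cosh^2 s}, \qquad \langle \partial_z, e_2 \rangle^2 = \sin^2 \theta. \]

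The final step is routine trigonometry. Adding these two contributions, $\cos^2\theta\sinh^2 s + \sin^2\theta \cosh^2 s = \cosh^2 s - \cos^2 \theta$, whence
\[ \langle \partial_z, \nu \rangle^2 = 1 - \frac{\cosh^2 s - \cos^2\theta}{\cosh^2 s} = \frac{\cos^2 \theta}{\cosh^2 s}. \]
Lemma \ref{seccurv} then gives $K_{\hr}(s,\theta) = -\cos^2\theta / \cosh^2 s$ as claimed. The $\alpha$-independence of the answer is a happy byproduct of the cancellation in $\partial_\theta z$ against the factor $\alpha^2/(\varphi')^2$ in the $\theta\theta$-component of the metric.
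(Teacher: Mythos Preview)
Your proof is correct and follows essentially the same route as the paper's own argument: both reduce to Lemma~\ref{seccurv} and compute $\langle \partial_z,\nu\rangle^2$ via the tangential part of $\partial_z$, obtaining the intermediate value $\cos^2\theta\tanh^2 s + \sin^2\theta$ for $|\nabla^\Sigma z|^2 = \langle\partial_z,e_1\rangle^2 + \langle\partial_z,e_2\rangle^2$. The paper simply calls this ``a straightforward, intrinsic calculation'' without writing out the $\partial_\theta z$ step, whereas you supply that detail explicitly (correctly using $(\varphi')^2=\alpha^2+\cos^2\theta$ to collapse the numerator).
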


\begin{proof}
We obtain this formula by expressing $K_{\hr}(s,\theta)$ in terms of the gradient $\nabla^{\Sigma}z$ of the height function $z$. The gradient of $z$ in $\hr$ is simply $\partial_z$, so $\nabla^{\Sigma}z$ is the projection of $\partial_z$ onto $T\Sigma$. In terms of the normal $\nu$ to the surface $\nabla^{\Sigma}z = \partial_z - \la \partial_z, \nu \ra \nu$. From this and Lemma \ref{seccurv} it follows that
\[ K_{\hr}(s,\theta) = -\la \partial_z, \nu \ra^2 = |\nabla^{\Sigma}z|^2 - 1. \]
Now a straightforward, intrinsic calculation of $|\nabla^{\Sigma}z|^2$ using the parametrisation \eqref{parm} and metric \eqref{stmetric} gives 
\[ |\nabla^{\Sigma}z|^2 = \cos^2 \theta \tanh^2s + \sin^2 \theta. \]
Together the last two equations give the result. \qedhere
\end{proof}

By direct calculation from the metric \eqref{stmetric}, or changing variables in Proposition $5.7$ of \cite{DH}, we have a formula for the intrinsic curvature on the horizontal catenoids. 

\begin{Lem}\label{intrcurv}
The intrinsic curvature $K_{\Sigma}$ on the horizontal catenoids is
\[ K_{\Sigma}(s,\theta) = -\frac{\alpha^2(\alpha - \varphi')^2}{\cosh^2s}\left(\frac{1}{\cosh^2s} + \alpha^{-2}\left(\frac{\sin^2 \theta \cos^2\theta}{(\alpha - \varphi')^2} - \frac{\varphi'}{\alpha - \varphi'}(2 \cos^2 \theta - 1) \right) \right). \]
\end{Lem}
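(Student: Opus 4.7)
The plan is to exploit the manifestly conformal form of the original $(u,v)$ parametrisation (Proposition \ref{DHparm}), in which the induced metric is
\[
\mu^2(du^2 + dv^2), \qquad \mu(u,v) = \frac{\cosh(\alpha v)}{\alpha - \varphi'(u)},
\]
and then change variables at the end. For any conformal metric on a surface the Gauss curvature is given by
\[
K_{\Sigma} = -\frac{1}{\mu^2}\Delta_0 \log \mu, \qquad \Delta_0 = \partial_u^2 + \partial_v^2,
\]
so it suffices to compute $\Delta_0 \log\mu$ and rewrite the result in $(s,\theta)$.

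First I would split $\log\mu = \log\cosh(\alpha v) - \log(\alpha-\varphi'(u))$. The $v$-piece is elementary and contributes $\partial_v^2\log\cosh(\alpha v) = \alpha^2\,\mathrm{sech}^2(\alpha v)$. The $u$-piece gives
\[
\partial_u^2\bigl(-\log(\alpha-\varphi')\bigr) = \frac{\varphi'''(\alpha-\varphi') + (\varphi'')^2}{(\alpha-\varphi')^2}.
\]
Differentiating the defining ODE $(\varphi')^2 = \alpha^2 + \cos^2\varphi$ from Definition \ref{aux} once yields $\varphi'' = -\sin\varphi\cos\varphi$, and differentiating again gives $\varphi''' = -(2\cos^2\varphi - 1)\varphi'$. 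Substituting these expresses $\Delta_0 \log \mu$ purely in terms of $\varphi$, $\varphi'$ and $\cosh(\alpha v)$.

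Finally I pass to $(s,\theta)$ via $s=\alpha v$ and $\theta=-\varphi$. Under this substitution $\cosh(\alpha v) = \cosh s$, while the identities $\sin^2\varphi\cos^2\varphi = \sin^2\theta\cos^2\theta$ and $2\cos^2\varphi - 1 = 2\cos^2\theta - 1$ hold, and $\varphi'$ survives as the $\theta$-function satisfying $(\varphi')^2 = \alpha^2 + \cos^2\theta$. Multiplying $-\Delta_0\log\mu$ by $\mu^{-2} = (\alpha-\varphi')^2/\cosh^2 s$ and pulling the overall factor $\alpha^2(\alpha-\varphi')^2/\cosh^2 s$ out front reproduces exactly the claimed formula.

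The only real pitfall is sign bookkeeping: $\varphi'<0$ and the reflection $\theta=-\varphi$ both introduce signs, but every relevant quantity ultimately appears either squared or inside the invariant combination $\alpha-\varphi'$, so with care no terms cancel incorrectly. There is no genuine analytic difficulty here; the lemma is a short computation, and the alternative route mentioned in the excerpt — changing variables in Proposition 5.7 of \cite{DH}, which records the curvature already in $(u,v)$ — yields the same result even more directly.
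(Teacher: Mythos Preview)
Your proposal is correct and follows essentially the approach the paper indicates: the paper merely states that the formula comes ``by direct calculation from the metric \eqref{stmetric}, or changing variables in Proposition 5.7 of \cite{DH}'', without giving any details. Your choice to carry out the computation in the conformal $(u,v)$ coordinates and then pass to $(s,\theta)$ is a sensible hybrid of these two routes, and the calculation you outline checks out line by line.
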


Note that the integral of the absolute value of $K_{\Sigma}$ is infinite. 

\begin{Lem}\label{principalcurvest}
The principal curvatures $\kappa_1, \kappa_2$ on $\Sigma_{\epsilon}$ satisfy the estimates
\[ \kappa_1(s,\theta) = 1 + \mathcal{O}(\epsilon^{-2}\cosh^{-4}s + \epsilon^{-1}\cosh^{-2}s), \hspace{5pt} \kappa_2(s,\theta) = \mathcal{O}(\epsilon^{-2}\cosh^{-4}s + \epsilon^{-1}\cosh^{-2}s). \]
\end{Lem}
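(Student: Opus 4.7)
The plan is to use $\kappa_1 + \kappa_2 = 2H = 1$ (since $H = 1/2$) together with the Gauss equation $\kappa_1 \kappa_2 = K_\Sigma - K_{\hr}$, and reduce the lemma to an estimate on $\Delta := K_\Sigma - K_{\hr}$. The principal curvatures are the roots of $t^2 - t + \Delta = 0$, so
\[ \kappa_1 = \frac{1 + \sqrt{1 - 4\Delta}}{2}, \qquad \kappa_2 = \frac{1 - \sqrt{1 - 4\Delta}}{2}. \]
A case split on $|\Delta|$ (Taylor expansion of $\sqrt{1 - 4\Delta}$ about $\Delta = 0$ when $|\Delta| \leq 1/8$, and the crude bound $|\sqrt{1 - 4\Delta} - 1| \leq 2\sqrt{|\Delta|} + |\Delta|$ when $|\Delta| > 1/8$, which occurs only on a bounded $s$-region) shows $|\kappa_1 - 1|, |\kappa_2| \leq C|\Delta|$ in both regimes, so everything reduces to proving
\[ \Delta = \mathcal{O}\bigl(\epsilon^{-2}\cosh^{-4}s + \epsilon^{-1}\cosh^{-2}s\bigr). \]

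To establish this bound I would expand the $K_\Sigma$ formula from Lemma \ref{intrcurv} and subtract $K_{\hr} = -\cos^2\theta/\cosh^2 s$ from Lemma \ref{seccurvcoord}. Collecting the $\cosh^{-2}s$ terms via the identity $\cos^2\theta - \sin^2\theta\cos^2\theta = \cos^4\theta$ yields
\[ \Delta = -\frac{\alpha^2(\alpha-\varphi')^2}{\cosh^4 s} + \frac{\cos^4\theta + \varphi'(\alpha-\varphi')(2\cos^2\theta - 1)}{\cosh^2 s}. \]
Since $\theta = -\varphi$ and $\varphi' < 0$, the ODE in Definition \ref{aux} gives $\varphi' = -\sqrt{\alpha^2 + \cos^2\theta}$, from which the scalar expansion $\sqrt{\alpha^2 + c} = \alpha + c/(2\alpha) + \mathcal{O}(\alpha^{-3})$ produces
\[ \alpha - \varphi' = 2\alpha + \mathcal{O}(\alpha^{-1}), \qquad \varphi'(\alpha - \varphi') = -2\alpha^2 + \mathcal{O}(1), \]
uniformly in $\theta$. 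Substituting shows $\alpha^2(\alpha - \varphi')^2 = \mathcal{O}(\alpha^4)$ and that the numerator of the second fraction is $\mathcal{O}(\alpha^2)$. Combining these with $\alpha^{-2} = 2\epsilon + \mathcal{O}(\epsilon^2)$ from Definition \ref{epsilon} produces the desired estimate on $\Delta$.

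The analysis involves no delicate cancellations: the two summands of $\Delta$ have independent asymptotic sizes of order $\epsilon^{-2}\cosh^{-4}s$ and $\epsilon^{-1}\cosh^{-2}s$ respectively, and simply adding the bounds suffices. The only mild technical obstacle is confirming that the error terms in the asymptotic expansion of $\alpha - \varphi'$ and $\varphi'(\alpha - \varphi')$ remain uniform in $\theta$, which follows immediately from the boundedness of $\cos^2\theta$.
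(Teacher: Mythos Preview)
Your proof is correct and follows essentially the same approach as the paper: both use the CMC condition $\kappa_1+\kappa_2=1$ together with the Gauss equation to write the principal curvatures as roots of $t^2 - t + (K_\Sigma - K_{\hr}) = 0$, and then invoke Lemmas~\ref{seccurvcoord} and~\ref{intrcurv} to bound $K_\Sigma - K_{\hr}$. You are simply more explicit than the paper in carrying out the expansion of $K_\Sigma - K_{\hr}$ and in justifying the passage from the bound on $\Delta$ to the bounds on $\kappa_1 - 1$ and $\kappa_2$.
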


\begin{proof}
The constant mean curvature condition $\kappa_1 + \kappa_2 = 1$ and the Gauss equation $K_{\hr} = K_{\Sigma} - \kappa_1\kappa_2$ imply that 
\[ \kappa_1^2 + \kappa_2^2 = 1 - 2\kappa_1\kappa_2 = 1 + 2(K_{\hr} - K_{\Sigma}). \] Using the cmc equation once more we see that the principal curvatures satisfy the quadratic equation $\kappa^2 - \kappa + K_{\Sigma} - K_{\hr} = 0$,  whose roots are
\[ \kappa = \frac{1}{2}\left(1 \pm \sqrt{1 - 4(K_{\Sigma} - K_{\hr})}\right). \]  By Lemmas \ref{seccurvcoord} and \ref{intrcurv} we have the bound $K_{\Sigma} - K_{\hr} = \mathcal{O}(\epsilon^{-2}\cosh^{-4}s + \epsilon^{-1}\cosh^{-2}s)$ from which the estimates follow.
 \qedhere
\end{proof}

\subsubsection{Convergence to horocylinders in the small necksize limit}

In this section we represent an annular domain in the horizontal catenoids as a horizontal graph, that is to say a graph of the form $y = g_{\e}(x,z)$. We obtain an expansion of the graphing function $g_{\e}$ which implies that the end of horizontal catenoids converges uniformly to a horocylinder on compact subsets of $\hr - \{(0,0,0)\}$. It will be most convenient to state the results in terms of the following polar coordinates. 

\begin{Def}[Polar coordinates $r,\gamma$]
The polar coordinates $r,\gamma$ in the $x,z$ plane are defined by
\[ x = r\sin \gamma, \hspace{5pt} z = r\cos \gamma. \]
\end{Def}

\begin{Lem}\label{horgraphexplem}
Fix a parameter $\rho > 1$. Then for all small $\epsilon$ (depending on $\rho$) the horizontal graph $g_{\epsilon}$ parametrising $\Sigma_{\epsilon} \cap \{(x,y,z) \in \hr: y \geq 1, \rho^{-1} \leq r \leq \rho \}$ admits the expansion
\begin{equation}
 g_{\epsilon}(r, \gamma) = 1 - \epsilon \log \epsilon + \epsilon \log(2r) + \mathcal{O}_{\rho}(\epsilon^2(\log \epsilon)^2) \hspace{5pt} \mbox{ as } \e \rightarrow 0.
\label{horgraphexp}
\end{equation}
Note that the dominant terms are independent of $\gamma$. 
\end{Lem}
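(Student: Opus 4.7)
The plan is to invert the parametrisation of Proposition \ref{stparm} on the relevant portion of $\Sigma_\epsilon$, solving for $(s,\theta)$ in terms of $(r,\gamma)$, and then read off $y = e^{\epsilon s}\omega(s,\theta)$. First I would record the basic asymptotics as $\epsilon \to 0$: from $\epsilon = 1/(\alpha(\alpha_* + \alpha))$ one has $\alpha^{-2} = 2\epsilon + O(\epsilon^2)$, and the initial value problem for $\varphi$ gives $-\varphi' = \alpha + \cos^2\varphi/(2\alpha) + O(\alpha^{-3})$. Consequently both $\alpha - \varphi'$ and $\alpha_* - \varphi'$ equal $2\alpha + O(\alpha^{-1})$, whence
\[ \alpha(\alpha - \varphi') = \epsilon^{-1}(1 + O(\epsilon)), \qquad \alpha_*(\alpha - \varphi') = \epsilon^{-1}(1 + O(\epsilon)). \]
Moreover, the coefficient of $e^{-s}\cosh s$ appearing in $\omega^{-1}$ is $O(\epsilon^2)$, so for $s \geq 0$ one has $\omega(s,\theta) = 1 + O(\epsilon^2)$ uniformly in $\theta$, since $e^{-s}\cosh s \leq 1$ on that range.

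The second step is the asymptotic inversion. Using the above, on the end $s > 0$ the parametrisation reads
\[ z = \epsilon\cos\theta \, \cosh s \, (1 + O(\epsilon)), \qquad x = \epsilon\sin\theta \, \cosh s \, e^{\epsilon s}\omega \, (1 + O(\epsilon)), \]
so $r^2 = x^2 + z^2 = \epsilon^2\cosh^2 s \cdot (1 + O(\epsilon \log(1/\epsilon)))$ and $\tan\gamma = (\alpha/\alpha_*) e^{\epsilon s}\omega \tan\theta$, provided $s = O(\log(1/\epsilon))$ so that $e^{\epsilon s} = 1 + O(\epsilon\log(1/\epsilon))$. For $r \in [\rho^{-1},\rho]$ the first relation gives $\cosh s = (r/\epsilon)(1 + O(\epsilon\log(1/\epsilon)))$ and hence
\[ s = \log(2r/\epsilon) + O(\epsilon\log(1/\epsilon)) = -\log\epsilon + \log(2r) + O(\epsilon\log(1/\epsilon)), \]
confirming a posteriori that $s \sim \log(1/\epsilon)$; simultaneously $\theta = \gamma + O(\epsilon\log(1/\epsilon))$. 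An implicit function theorem argument applied to the leading-order expression also confirms that $(s,\theta) \mapsto (x,z)$ is a diffeomorphism onto the annular region, so $\Sigma_\epsilon$ really is a horizontal graph there.

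The final step is substitution. Since $\omega = 1 + O(\epsilon^2)$ and $\epsilon s = O(\epsilon\log(1/\epsilon))$, Taylor expansion gives $e^{\epsilon s} = 1 + \epsilon s + O(\epsilon^2(\log\epsilon)^2)$, whence
\[ y = e^{\epsilon s}\omega = 1 + \epsilon s + O(\epsilon^2(\log\epsilon)^2) = 1 - \epsilon\log\epsilon + \epsilon\log(2r) + O(\epsilon^2(\log\epsilon)^2), \]
proving the claim. The main obstacle, and the only delicate point, is ensuring that every $O(\cdot)$ in the inversion step is uniform in $(r,\gamma)$ over the compact annulus; the $\gamma$-dependence enters only through the $O(\epsilon)$ and higher corrections to $\varphi'$, $\alpha_*$ and $\omega$, which all get swept into the $O(\epsilon^2(\log\epsilon)^2)$ remainder, so the leading terms are indeed independent of $\gamma$.
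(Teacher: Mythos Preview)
Your proof is correct and follows essentially the same strategy as the paper: invert the relation between $s$ and the radial variable on the compact annulus, then expand $y = e^{\epsilon s}\omega$ using $\omega = 1 + O(\epsilon^2)$ and $\epsilon s = O(\epsilon|\log\epsilon|)$. The only organisational difference is that the paper introduces the auxiliary variable $\tilde r := \epsilon\cosh s$, which has an exact and explicit inverse $s = \log\bigl(\epsilon^{-1}\tilde r + \sqrt{\epsilon^{-2}\tilde r^2 - 1}\bigr)$; this cleanly separates the exact inversion $s \leftrightarrow \tilde r$ from the approximate relation $\tilde r = r(1 + O_\rho(\epsilon|\log\epsilon|))$, and so avoids the mild bootstrap you flag with ``confirming a posteriori that $s \sim \log(1/\epsilon)$''.
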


\begin{proof}
We begin by defining an auxiliary variable $\tilde{r}$ which is approximately the radial variable $r$ defined above:
\[ \tilde{r} := \epsilon \cosh s. \]  For each fixed $\epsilon > 0$ this defines a diffeomorphism $(0,\infty) \ni s \leftrightarrow \tilde{r} \in (\epsilon, \infty)$, which we restrict to $\tilde{r} \in [ \rho^{-1}, \rho]$. Inverting this we obtain
\begin{equation} s = \log \left(\epsilon^{-1}\tilde{r} + \sqrt{\epsilon^{-2}\tilde{r}^2 - 1 } \right) = \log (2\epsilon^{-1}\tilde{r}) + \mathcal{O}_{\rho}(\epsilon^2). \label{sitotilder} \end{equation}
Now we derive an expansion for $y_{\e}(s,\theta) = e^{\epsilon s}\omega(s,\theta)$ in terms of $\tilde{r}$. Firstly note that $\omega = 1 + \mathcal{O}(\e^2)$ uniformly on $s \geq 0$. Hence the expansion \eqref{sitotilder} implies
\begin{equation} y_{\e}(s(\tilde{r}),\theta) = 1 - \epsilon \log \epsilon + \epsilon \log (2\tilde{r}) + \mathcal{O}_{\rho}(\epsilon^2(\log \epsilon)^2). \label{yitotilder} \end{equation}
Note that this is essentially \eqref{horgraphexp}, except we want it in terms of $r$ not $\tilde{r}$.  

We must find an expression for $\tilde{r}$ in terms of $r$. To begin with we compute
\[ x_{\e}(s,\theta) = \frac{\sin \theta}{\alpha_*(\alpha - \varphi')}(\cosh s)y_{\e}(s,\theta) = \tilde{r} \sin \theta (1 +  \mathcal{O}_{\rho}(\epsilon|\log \epsilon|), \]
thanks to $\alpha_*^{-1}(\alpha - \varphi')^{-1} = \e(1 + \mathcal{O}(\e))$ and \eqref{yitotilder}. Similarly one has
\[ z_{\e}(s,\theta) = \frac{\cos \theta}{\alpha(\alpha - \varphi')}\cosh s = \tilde{r} \cos \theta (1 +  \mathcal{O}(\epsilon)). \]
From these and the relation $r^2 = x^2 + z^2$ it follows that 
\[ \tilde{r} = r(1 +  \mathcal{O}_{\rho}(\epsilon|\log \epsilon|)). \]
Putting this back into \eqref{yitotilder} completes the proof. \qedhere
\end{proof} 

\subsubsection{Truncating the horizontal catenoids}\label{truncation}

\begin{Def} We will truncate the horizontal catenoids at $s =S_{\e}$, where $S_{\e}$ is defined as the positive solution to 
\[ \e \cosh s = 1. \]
\end{Def}

We think of the surface $\mathbf{X}_{\alpha}\left([S_{\e},\infty)\times \mathbb{S}^1\right)$ as the end of a horizontal catenoid. When we construct cmc $1/2$ surfaces asymptotic to a horizontal catenoid end, it will be as normal graphs over these surfaces.  

Note that 
\begin{equation} S_{\e} = |\log \e| + \mathcal{O}(1) \mbox{ as $\e \rightarrow 0$. } \label{SepExp} \end{equation} Although this choice of cut-off appears to truncate the ends far from the necks, an intrinsic calculation using the metric \eqref{stmetric} shows that the distance from the neck geodesic $s = 0$ to the  boundary curve $s = S_{\e}$ is realised by the geodesic $\theta = 0$, and equals $1 + \mathcal{O}(\e^2)$.

By Lemma \ref{principalcurvest}, the principal curvatures are bounded uniformly in $s$ and $\e$ on the end. Therefore this choice of truncation avoids the neck regions of the catenoids, where the curvatures blow up as $\e \rightarrow 0$. Furthermore, it follows from Lemma \ref{horgraphexplem} that as $\e \rightarrow 0$, these ends converge to the horocylinder $y = 1$, the convergence being uniform on compact subsets of $\hr - \{(0,0,0)\}$. 

For later reference we record here the behaviour of the boundary curve $\mathbf{X}_{\alpha}(S_{\e}, \theta)$ as $\e \rightarrow 0$ when regarded as a horizontal graph over a curve in the $xz$ plane. 

\begin{Lem}\label{BdCurveNecksizeLim}         
On the boundary curve $s = S_{\e}$  of an end we have
\begin{eqnarray}
x_{\alpha}(S_{\e},\theta) & = & \sin \theta + \mathcal{O}(\e|\log\e|), \\
y_{\alpha}(S_{\e},\theta) & = & 1 - \e \log(\e) + \mathcal{O}(\e),  \label{bdcurvegraph}\\
z_{\alpha}(S_{\e},\theta) & = & \cos \theta + \mathcal{O}(\e).
\end{eqnarray} 
Thus the boundary curve is the graph over a curve in the $xz$ plane that converges to the unit circle $x^2 + z^2 = 1$, and the graph admits the expansion \eqref{bdcurvegraph}, as $\e \rightarrow 0$. 
\end{Lem}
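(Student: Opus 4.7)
The plan is to substitute $s = S_\e$ directly into the parametrisation \eqref{parm} and expand each coordinate in powers of $\e$. In fact the result is essentially Lemma \ref{horgraphexplem} specialised to $\tilde r = \e\cosh s = 1$, which corresponds precisely to the boundary curve by the definition of $S_\e$; but a short direct derivation is the cleanest route.

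First I would establish the expansion of $S_\e$ itself. From the defining equation $\cosh S_\e = \e^{-1}$ we get $e^{S_\e} = \e^{-1}(1 + \sqrt{1-\e^2}) = 2\e^{-1} + \mathcal{O}(\e)$, hence $S_\e = |\log \e| + \log 2 + \mathcal{O}(\e^2)$ and consequently
\[
e^{\e S_\e} = 1 - \e\log\e + \mathcal{O}(\e),
\]
as already used in the proof of the preceding lemma.

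Second, I would record the asymptotics of the two scalar prefactors appearing in \eqref{parm}. Using $\e = 1/(\alpha(\alpha+\alpha_*))$ (equivalent to \eqref{epsilon}), $\alpha_* = \alpha(1+\e)$, and $\alpha - \varphi' = \alpha + \sqrt{\alpha^2 + \cos^2\theta}$ (from Definition \ref{aux} together with $\theta = -\varphi$), a routine expansion for large $\alpha$ yields
\[
\frac{1}{\alpha_*(\alpha - \varphi')} = \e + \mathcal{O}(\e^2), \qquad \frac{1}{\alpha(\alpha - \varphi')} = \e + \mathcal{O}(\e^2),
\]
uniformly in $\theta$. Combined with $\cosh S_\e = \e^{-1}$, the prefactor $\cosh S_\e / (\alpha_*(\alpha-\varphi'))$ is $1 + \mathcal{O}(\e)$, and similarly for the $z$-prefactor.

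Third, plugging everything into \eqref{parm} and using the already established uniform bound $\omega(s,\theta) = 1 + \mathcal{O}(\e^2)$ on $s \geq 0$, the three expansions follow by straightforward multiplication: $y_\alpha(S_\e,\theta) = e^{\e S_\e}\omega = 1 - \e\log\e + \mathcal{O}(\e)$; $z_\alpha(S_\e,\theta) = \cos\theta \cdot (1 + \mathcal{O}(\e)) = \cos\theta + \mathcal{O}(\e)$; and $x_\alpha(S_\e,\theta) = \sin\theta \cdot (1 + \mathcal{O}(\e)) \cdot y_\alpha(S_\e,\theta) = \sin\theta + \mathcal{O}(\e|\log\e|)$. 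The closing assertion that the boundary curve is a horizontal graph over a plane curve converging to the unit circle $x^2 + z^2 = 1$ then reads off the first and third expansions immediately (and $\theta \mapsto (\sin\theta,\cos\theta)$ is manifestly an immersion).

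There is no real conceptual obstacle here; the argument is essentially bookkeeping of leading orders. The only mild point to flag is that the $x$-component picks up the slightly worse $\mathcal{O}(\e|\log\e|)$ error instead of $\mathcal{O}(\e)$: this is because $x_\alpha$ carries the factor $e^{\e s}$ through its dependence on $y_\alpha$, and $e^{\e S_\e} - 1 = -\e\log\e + \mathcal{O}(\e)$ dominates the $\mathcal{O}(\e)$ produced by the scalar prefactor.
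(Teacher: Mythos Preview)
Your proof is correct and follows exactly the approach the paper indicates: the paper's own proof reads in its entirety ``The proof is by straightforward calculation, using \eqref{SepExp}'', and you have simply supplied that calculation in full, with all the bookkeeping of the prefactors $\frac{1}{\alpha_*(\alpha-\varphi')}$, $\frac{1}{\alpha(\alpha-\varphi')}$, $e^{\e S_\e}$ and $\omega$ carried out carefully. Your remark on why the $x$-component inherits the slightly worse $\mathcal{O}(\e|\log\e|)$ error through its factor of $y_\alpha$ is also on the mark.
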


\begin{proof}
The proof is by straightforward calculation, using \eqref{SepExp}. \qedhere
\end{proof}

\newpage

\section{Expansion of the Mean Curvature Operator}

The goal of this section is to establish a first-order expansion for the mean curvature operator for normal graphs off the ends of horizontal catenoids. It's a well-known fact (\cite{MP}, \cite{MPP}) that the linearisation of the mean curvature operator for a surface $\Sigma$ is the Jacobi operator $\mathcal{J}$, given by
\[ \mathcal{J} = \triangle + |h|^2 + Ric(\nu). \]
Here $\triangle$ is the Laplace-Beltrami operator on $\Sigma$, $|h|^2$ is the norm squared of the second fundamental form, and $Ric(\nu)$ is the Ricci tensor evaluated on the unit normal field $\nu$ of $\Sigma$. The emphasis in the following result is therefore on the bounds for the coefficients of the non-linear error $\mathcal{Q}_{\e}$, or more precisely the estimate \eqref{QLip} that we will use in the contraction mapping argument. 

\begin{Prop}\label{MeanCurvExp}
The mean curvature operator $\mathcal{M}_{\e}(w)$ admits the expansion
\[ \mathcal{M}_{\e}(w) = 1/2 + \mathcal{J}_{\e}w + \mathcal{Q}_{\e}(w). \]
The error term $\mathcal{Q}_{\e}$ has the properties that $\mathcal{Q}_{\e}(0) = 0$, and there exists a constant $C > 0$, independent of $\epsilon$, such that for all $s \geq S_{\epsilon}$
\begin{equation} |\mathcal{Q}_{\e}(v) - \mathcal{Q}_{\e}(w)|_{0,\alpha,s} \leq C\left(|v|_{2,\alpha,s} + |w|_{2,\alpha,s} \right)|v - w|_{2,\alpha,s}. \label{QLip} \end{equation} 
\end{Prop}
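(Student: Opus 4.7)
The plan is to compute $\mathcal{M}_{\e}(w)$ via Fermi coordinates in a tubular neighbourhood of $\Sigma_{\e}$. Writing points of $\hr$ near $\Sigma_{\e}$ as $(\sigma, t)$ where $\sigma \in \Sigma_{\e}$ and $t$ is the signed normal distance, the ambient metric splits as $g_t + dt^2$, where $g_t$ is the induced metric on the parallel surface at distance $t$. A normal graph of $w$ is just the slice $t = w(\sigma)$, and a standard computation (analogous to those in \cite{MP}, \cite{MPP}) gives $\mathcal{M}_{\e}(w)$ as a quasilinear, second-order operator in $w$ whose coefficients are rational expressions in $w$, $\nabla^{\Sigma}w$ and $\nabla^{\Sigma,2}w$, with coefficient functions depending on $\sigma$ through the principal curvatures $\kappa_i$ of $\Sigma_{\e}$, the ambient sectional and Ricci curvatures, and their first normal derivatives. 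I would carry this out first in the general Fermi-coordinate setting, obtaining a template expansion valid for any embedded surface.

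The next step is Taylor expansion of this expression in $(w, \nabla w, \nabla^2 w)$ about zero. The constant-in-$w$ term is the mean curvature of $\Sigma_{\e}$, which equals $1/2$ by construction. The linear-in-$(w, \nabla^2 w)$ term, after a well-known manipulation using the Gauss equation and the expression for the normal derivative of the second fundamental form, produces exactly $\mathcal{J}_{\e}w = \Delta_{\Sigma_{\e}}w + |h|^2 w + \mathrm{Ric}(\nu)w$. Everything else is defined to be $\mathcal{Q}_{\e}(w)$, and the vanishing $\mathcal{Q}_{\e}(0) = 0$ is immediate. In integral form one can write $\mathcal{Q}_{\e}(w) = \int_0^1 (1-r)\, \partial_r^2\,\mathcal{M}_{\e}(rw)\, dr$, which expresses $\mathcal{Q}_{\e}$ as a sum of terms quadratic in $(w, \nabla w, \nabla^2 w)$ with coefficients depending smoothly on $\sigma$ and on $(w, \nabla w)$.

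The main technical step, and the one I expect to be the main obstacle, is proving that these coefficients are uniformly bounded on the end $\{s \geq S_{\e}\}$ independently of $\e$. The worry is that $\Sigma_{\e}$ degenerates as $\e \to 0$, with principal curvatures blowing up near the neck. However, the truncation is precisely engineered to avoid this: at $s = S_{\e}$ one has $\cosh s = \e^{-1}$, so the error terms $\e^{-2}\cosh^{-4}s + \e^{-1}\cosh^{-2}s$ in Lemma \ref{principalcurvest} are $O(\e)$ on the end. Hence $\kappa_1, \kappa_2$ are bounded uniformly in $\e$ on $\{s \geq S_{\e}\}$, and by Lemmas \ref{seccurv}, \ref{secplusricci} and \ref{seccurvcoord}, so are $K_{\hr}$ and $\mathrm{Ric}(\nu)$. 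The normal derivatives of these quantities in $\hr$ are bounded by the smooth, homogeneous geometry of $\hr$. I would carry this out by differentiating the Fermi coordinate metric expansion $g_t = g_0 - 2th + t^2(h^2 + \text{ambient curvature terms}) + \cdots$ and observing that each term entering the non-linear error is a polynomial in bounded quantities.

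Given these uniform coefficient bounds, the Lipschitz-type estimate \eqref{QLip} follows from a standard Taylor expansion argument combined with the product rule for H\"older norms. Since $\mathcal{Q}_{\e}$ is, up to bounded coefficients depending smoothly on $(w, \nabla w)$, quadratic in its highest derivatives, one writes
\[ \mathcal{Q}_{\e}(v) - \mathcal{Q}_{\e}(w) = \int_0^1 D\mathcal{Q}_{\e}(w + r(v-w))[v-w]\, dr, \]
and since $D\mathcal{Q}_{\e}(u)$ is linear in $u$ with bounded coefficients, the integrand is controlled by $(|v|_{2,\alpha,s} + |w|_{2,\alpha,s})|v-w|_{2,\alpha,s}$, using the Banach-algebra property of $\mathcal{C}^{0,\alpha}$ and $\mathcal{C}^{2,\alpha}$ on balls of unit radius (which is the natural scale after the truncation, by Lemma \ref{horgraphexplem}). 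This yields the required estimate with $C$ independent of $\e$.
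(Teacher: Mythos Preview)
Your outline follows the same route as the paper: Fermi coordinates, Taylor expansion about $w=0$, identification of the linear term as $\mathcal{J}_{\e}$, then control of the quadratic remainder. However, there is a genuine gap in the step you flag as ``the main obstacle''.

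You bound $\kappa_1,\kappa_2$ on the surface $\Sigma_{\e}\cap\{s\ge S_{\e}\}$ via Lemma~\ref{principalcurvest} and the truncation, which is correct. But the remainder in the Fermi expansion of $\tg_{ij}(x,t)$ and $\tG^3_{ij}(x,t)$ is controlled by $\sup_{|t|\le t_0}|\partial_t^k\tg_{ij}|$, and these derivatives involve the shape operator $S$ of the \emph{parallel} hypersurfaces $\Sigma_t$, not just of $\Sigma_0$. Your sentence ``the normal derivatives of these quantities in $\hr$ are bounded by the smooth, homogeneous geometry of $\hr$'' does not cover this: homogeneity of $\hr$ controls the ambient curvature operator $R$, but $S$ depends on the embedding of $\Sigma_{\e}$ and there is no a~priori reason it stays bounded on a tube of fixed width as $\e\to 0$. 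The paper handles this by the scalar Riccati equation $\partial_t\kappa_i=\kappa_i^2+\rho_i$ with $-1\le\rho_i\le 0$, comparing with the explicit solutions of $\partial_t\ok=\ok^2$ and $\partial_t\uk=\uk^2-1$ to propagate the $t=0$ bounds to $|t|\le 1/4$ (Lemma~\ref{EvalBound}). It also uses the specific structure of $\hr$ to show $\tN_{\partial_3}R\equiv 0$ (Lemma~\ref{nablaRvanish}), which is what makes $\partial_t^3\tg_{ij}$ controllable by $R$ and $S$ alone. Without these two ingredients your remainder estimates for the Fermi expansion are not justified.

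A second, smaller issue: you assert that ``each term entering the non-linear error is a polynomial in bounded quantities''. In the $(s,\theta)$ coordinates in which the norms $|\cdot|_{k,\alpha,s}$ are defined, the metric components satisfy $g_{ij}=\mathcal{O}(\e^2\cosh^2 s)$ and $g^{ij}=\mathcal{O}((\e^2\cosh^2 s)^{-1})$, so neither is bounded. The final estimate \eqref{QLip} has no growth factor only because each term in the mean curvature pairs a factor from $g_w^{-1}$ with one from $h_w$, and the growth cancels. The paper tracks this with the bookkeeping classes $Q_{-1},Q_0,Q_1$; you should do something equivalent rather than claim boundedness of individual factors.
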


To prove Proposition \ref{MeanCurvExp} we work partly abstractly, using Fermi coordinates on a tubular neighbourhood of an embedded surface. This simplifies the calculation of the zeroth and first order terms in the expansion. To handle the higher order terms, and make estimates that are valid uniformly on the ends, it is necessary to introduce various facts about the geometry of $\hr$ and the ends of horizontal catenoids.

\subsection{Fermi coordinates}

Consider an embedding $\Psi$ of a two-sided surface $\Sigma$ in a three-manifold $(\tM, \tg)$, with unit normal $\nu$. Relative to any (local) coordinates $x = (x^1,x^2)$ on $\Sigma$, one defines the \emph{Fermi coordinates} on a tubular neighbourhood of (a coordinate neighbourhood in) $\Sigma$, by the parametrisation
\[ (x^1,x^2,t) \mapsto exp_{\Psi(x)}(t\nu(x)). \]
Here $exp_{\Psi(x)}$ denotes the exponential map on $T_{\Psi(x)}\tM$. 

Given a sufficiently small and smooth function $w$ on $\Sigma$, we define the surface $\Sigma_w$ by the embedding
\[ \Sigma \ni x \mapsto \Psi_w(x) := exp_{\Psi(x)}(w(x)\nu(x)) \in \tM. \] With respect to Fermi coordinates, the embedding $\Psi_w$ takes the simple form
\[ (x^1,x^2) \mapsto (x^1,x^2, w(x^1,x^2)). \]
This greatly facilitates the calculations, which proceed analogously to those for the graph of a function in Euclidean space. Before we compute the induced metric $g_w$ and second fundamental form $h_w$ on $\Sigma_w$, we develop a few aspects of the theory of Fermi coordinates
that we will need.

\subsubsection*{Notation}
The coordinate frame in $\tM$ determined by the Fermi coordinates $(x^1,x^2,t)$ will be denoted by $\partial_1,\partial_2$ and $\partial_3$. The level sets of the coordinate $t$ will be denoted by $\Sigma_t$, and refered to as \emph{tubular hypersurfaces}.

\begin{Lem}[Tubular Gauss Lemma]
The $t$-coordinate lines are unit speed geodesics that are orthogonal to the tubular hypersurfaces. 
\end{Lem}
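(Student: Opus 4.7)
The plan is to prove the two assertions separately, both by very direct differential-geometric arguments. That the $t$-coordinate lines are unit speed geodesics is essentially tautological: for each fixed $x$, the curve $t \mapsto \exp_{\Psi(x)}(t\nu(x))$ is by definition of the exponential map a geodesic of $(\tM,\tg)$ with initial velocity $\nu(x)$, and since $\nu(x)$ is a unit vector and geodesics preserve speed, the curve is unit speed. So $\tN_{\partial_3}\partial_3 = 0$ and $\tg(\partial_3,\partial_3) \equiv 1$.

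The orthogonality assertion is the standard Gauss lemma argument adapted to this setting. The strategy is to show that $\tg(\partial_3,\partial_i)$ is independent of $t$ for $i=1,2$, and then to verify that it vanishes at $t = 0$. For the $t$-independence, I would differentiate along $\partial_3$:
\[
\partial_3\,\tg(\partial_3,\partial_i) = \tg(\tN_{\partial_3}\partial_3,\partial_i) + \tg(\partial_3,\tN_{\partial_3}\partial_i).
\]
The first term vanishes because the $t$-curves are geodesics. For the second term, since $\partial_3$ and $\partial_i$ are coordinate vector fields their Lie bracket vanishes, so $\tN_{\partial_3}\partial_i = \tN_{\partial_i}\partial_3$, and therefore
\[
\tg(\partial_3,\tN_{\partial_3}\partial_i) = \tg(\partial_3,\tN_{\partial_i}\partial_3) = \tfrac{1}{2}\partial_i\,\tg(\partial_3,\partial_3) = 0,
\]
using that $\tg(\partial_3,\partial_3) \equiv 1$. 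Hence $\tg(\partial_3,\partial_i)$ is constant along each $t$-line.

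To finish, I would evaluate at $t = 0$. There $\partial_3|_{t=0} = \nu(x)$ is normal to $\Sigma$, while $\partial_i|_{t=0} = \Psi_* (\partial/\partial x^i) \in T_{\Psi(x)}\Sigma$, so $\tg(\partial_3,\partial_i)|_{t=0} = 0$. Combined with the previous step this gives $\tg(\partial_3,\partial_i) \equiv 0$ on the tubular neighbourhood, proving that the $t$-curves are orthogonal to every tubular hypersurface $\Sigma_t$. I don't anticipate any serious obstacle here: the only point to handle carefully is the interchange of covariant derivatives, which is justified by the vanishing of the coordinate Lie brackets, and the initial condition, which uses precisely the definition of $\nu$ as unit normal to $\Psi(\Sigma)$.
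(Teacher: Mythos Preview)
Your proof is correct and essentially identical to the paper's own argument: both observe that the $t$-curves are unit speed geodesics by definition of the exponential map, then show $\tg(\partial_3,\partial_i)$ is constant in $t$ by differentiating and using the geodesic and unit-speed conditions, and finally invoke the initial orthogonality at $t=0$. You spell out the use of torsion-freeness and the identity $\tg(\partial_3,\tN_{\partial_i}\partial_3)=\tfrac12\partial_i\tg(\partial_3,\partial_3)$ a bit more explicitly than the paper does, but the route is the same.
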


\begin{proof}
By definition of the exponential map, the $t$-coordinate lines are unit speed geodesics. Note that $\partial_1$ and $\partial_2$ are tangential to the tubular hypersurfaces for all $t$, so we need to show that $\partial_3$ is orthogonal to them. Since $\nu(x) = \partial_3|_{(x,0)}$, we have $\la \partial_i, \partial_3 \ra = 0$ when $t = 0$. Furthermore, for $i = 1,2$,
\[ \frac{d}{dt}\la \partial_i, \partial_3 \ra = \la \tN_{\partial_3} \partial_i, \partial_3 \ra + \la \partial_i, \tN_{\partial_3}\partial_3 \ra = 0, \]
because each term vanishes, since $t$-coordinate lines are unit speed geodesics.  \qedhere
\end{proof}

In the forthcoming calculations we find it convenient, following \cite{G}, to use the following two fields of operators, which satisfy a differential equation.

\begin{Def}
The operators $S$ and $R$ are defined by
\[ S(\cdot) = -\tN_{(\cdot)}\partial_3, \hspace{5pt} \mbox{ and } \hspace{5pt} R(\cdot) = \widetilde{R}(\cdot, \partial_3)\partial_3, \]
where $\widetilde{R}(X,Y)Z$ is the curvature endomorphism of $\tM$. 
\end{Def}

By the Tubular Gauss Lemma, for each fixed $t$ the operator $S$ is nothing but the shape operator for the tubular hypersurface $\Sigma_t$. The relevance of $R$ is a little less clear, but illuminated by the next two lemmas. 

\begin{Lem}
\label{Rlem} For each fixed $t$ the operators $R$ are a field of symmetric linear operators on $T\Sigma_t$ whose eigenvalues are sectional curvatures (in $\tM$) of two-planes orthogonal to the tubular hypersurfaces.
\end{Lem}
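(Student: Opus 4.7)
The plan is to verify three separate assertions about $R$ acting on a fixed tubular hypersurface $\Sigma_t$: (i) that $R$ preserves $T\Sigma_t$, so that it is genuinely an endomorphism of that tangent space; (ii) that $R$ is symmetric with respect to the induced metric; and (iii) that its eigenvalues are the sectional curvatures of two-planes containing $\partial_3$. Each of these follows from one of the standard algebraic symmetries of the Riemann curvature tensor $\widetilde{R}$, applied to the special situation in which one of the slots is filled by the unit normal $\partial_3$. So the proof will be short; the main thing to get right is to invoke the correct symmetry each time.

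First I would check that $R$ maps $T\Sigma_t$ to itself. By the Tubular Gauss Lemma, $T\Sigma_t = \partial_3^{\perp}$, so I need $\la R(X), \partial_3 \ra = 0$ for $X \in T\Sigma_t$. This expands as $\widetilde{R}(X,\partial_3,\partial_3,\partial_3)$, which vanishes by skew-symmetry of $\widetilde{R}$ in its last two arguments. Next I would establish symmetry: for $X, Y \in T\Sigma_t$,
\[ \la R(X), Y \ra = \widetilde{R}(X, \partial_3, \partial_3, Y) = \widetilde{R}(\partial_3, Y, X, \partial_3) = \widetilde{R}(Y, \partial_3, \partial_3, X) = \la R(Y), X \ra, \]
using the pair-exchange symmetry $\widetilde{R}(A,B,C,D) = \widetilde{R}(C,D,A,B)$ together with the skew-symmetries in the first and last pairs.

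Finally, having established that $R$ is symmetric on the two-dimensional inner product space $T_p\Sigma_t$, it admits an orthonormal eigenbasis. If $X$ is a unit eigenvector with eigenvalue $\lambda$, then $X$ is orthogonal to $\partial_3$ (which is also unit), so the plane $P = \mathrm{span}\{X, \partial_3\}$ is spanned by orthonormal vectors and satisfies
\[ K_{\tM}(P) = \widetilde{R}(X, \partial_3, \partial_3, X) = \la R(X), X \ra = \lambda. \]
By construction $P$ contains $\partial_3$, which is the normal to $\Sigma_t$, so $P$ is orthogonal to $\Sigma_t$ in the sense of the statement. This gives all three assertions.

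There is no real obstacle here: the entire argument is an exercise in bookkeeping with the symmetries of $\widetilde{R}$. The one subtlety worth flagging is interpreting \emph{planes orthogonal to the hypersurface} as planes that \emph{contain} the normal $\partial_3$ (rather than planes orthogonal to the tangent space of $\Sigma_t$, which would reduce to the one-dimensional normal line and make no sense as a two-plane); this is exactly the class of planes for which $K_{\tM}$ arises as $\la R(X), X \ra$ with $X$ a unit vector in $T\Sigma_t$.
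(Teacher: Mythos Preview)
Your proof is correct and follows essentially the same approach as the paper: both establish tangentiality of $R(X)$ via the skew-symmetry of $\widetilde{Rm}$ in its last pair, symmetry of $R$ via the remaining curvature symmetries, and then identify the eigenvalues with sectional curvatures of planes containing $\partial_3$. The only cosmetic difference is that the paper phrases the last step as ``extrema of the quadratic form on the unit sphere'' whereas you pass directly through an orthonormal eigenbasis, but these are equivalent.
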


\begin{proof}
Note that
\begin{equation} \la R(X), Y \ra = \la \widetilde{R}(X,\partial_3)\partial_3, Y \ra = \widetilde{Rm}(X,\partial_3, \partial_3, Y). \label{REq} \end{equation}
Taking $Y = \partial_3$, the anti-symmetry of $\widetilde{Rm}$ in its last two arguments shows that $R(X)$ is tangential to the tubular hypersurfaces (by the Tubular Gauss lemma), so the operators $R$ reduce to operators on $T\Sigma_t$. Symmetry of $R$ also follows from the symmetries of the curvature tensor $\widetilde{Rm}$. Hence the eigenvalues of $R$ are the extrema of the associated quadratic form $\la R(X),X \ra$, when restricted to the unit sphere in the tangent spaces for $\Sigma_t$. By \eqref{REq} these extrema are the sectional curvatures of certain two-planes orthogonal to the tubular hypersurfaces. \qedhere
\end{proof}

The following differential equation for the shape operators $S$ is the key fact about $S$ and $R$. Recall that the connection on $Hom(T\tM)$ is defined to respect the pairing of an operator with a tangent vector. Hence for an operator $T$ we have
\[ (\tN_XT)Y := \tN_X(TY) - T\left(\tN_XY\right). \]

\begin{Lem}[Ricatti equation for shape operators (\cite{G})]
\label{RicattiLem}
The shape operators $S$ satisfy the Ricatti equation
\begin{equation} \nabla_{\partial_3}S = S^2 + R. \label{RicattiS} \end{equation}
\end{Lem}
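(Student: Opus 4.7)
The plan is to unwind the definition of $\nabla_{\partial_3}S$ using the product-rule convention stated just above the lemma, substitute the definition of $S$, and then convert the resulting second covariant derivatives into a curvature term by invoking the definition of $\widetilde{R}$.

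First I would pick an arbitrary tangent vector field $X$ and write
\[ (\nabla_{\partial_3}S)(X) = \tN_{\partial_3}(S(X)) - S(\tN_{\partial_3}X) = -\tN_{\partial_3}\tN_X \partial_3 + \tN_{\tN_{\partial_3}X}\partial_3. \]
The goal is to recognize the right hand side as $S^2(X) + R(X) = \tN_{\tN_X\partial_3}\partial_3 + \widetilde{R}(X,\partial_3)\partial_3$. To bridge the two, I would expand the definition of the Riemann curvature endomorphism:
\[ \widetilde{R}(X,\partial_3)\partial_3 = \tN_X\tN_{\partial_3}\partial_3 - \tN_{\partial_3}\tN_X\partial_3 - \tN_{[X,\partial_3]}\partial_3. \]

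Two facts from the Tubular Gauss Lemma do the rest. Since the $t$-coordinate lines are unit speed geodesics we have $\tN_{\partial_3}\partial_3 = 0$, which kills the first term on the right. Since $\tN$ is torsion-free, $[X,\partial_3] = \tN_X \partial_3 - \tN_{\partial_3}X$, and substituting back gives
\[ \widetilde{R}(X,\partial_3)\partial_3 = -\tN_{\partial_3}\tN_X\partial_3 - \tN_{\tN_X\partial_3}\partial_3 + \tN_{\tN_{\partial_3}X}\partial_3. \]
Comparing this with the expression for $(\nabla_{\partial_3}S)(X)$ displayed above, we read off
\[ (\nabla_{\partial_3}S)(X) = \widetilde{R}(X,\partial_3)\partial_3 + \tN_{\tN_X\partial_3}\partial_3 = R(X) + S^2(X), \]
which is the Ricatti equation, since $X$ was arbitrary.

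There is essentially no hard step here; the only thing to be careful about is the sign conventions. In particular, the minus sign built into the definition $S(\cdot) = -\tN_{(\cdot)}\partial_3$ is precisely what turns $\tN_{\tN_X\partial_3}\partial_3$ into $+S^2(X)$ rather than $-S^2(X)$, and the sign choice in defining $R(X) = \widetilde{R}(X,\partial_3)\partial_3$ (rather than $\widetilde{R}(\partial_3,X)\partial_3$) matches the output of the curvature expansion above. So the only potential pitfall is bookkeeping with these two sign conventions together with $\tN_{\partial_3}\partial_3 = 0$; once those are set correctly, the identity falls out immediately.
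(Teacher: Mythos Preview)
Your proof is correct and follows essentially the same approach as the paper's: both unwind the definition of $(\tN_{\partial_3}S)(X)$, use $\tN_{\partial_3}\partial_3=0$, and identify the curvature term. The only difference is that the paper first observes the identity is tensorial and then verifies it on coordinate vectors $\partial_i$, so that $[\partial_i,\partial_3]=0$ and the Lie bracket term never appears; you instead carry an arbitrary $X$ and the bracket term through, which cancels at the end. Same argument, slightly different bookkeeping.
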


\begin{proof}
Equation \eqref{RicattiS} is tensorial, so we can verify it using any basis we please. Using coordinate vectors $\partial_i$, one has simply $R \partial_i = - \tN_{\partial_3}\tN_{\partial_i}\partial_3$, therefore
\[ \left(\tN_{\partial_3} S\right) \partial_i = \tN_{\partial_3}(S\partial_i) - S\left(\tN_{\partial_3}\partial_i\right) =
 - \tN_{\partial_3} \tN_{\partial_i} \partial_3 - S \left(\tN_{\partial_i}\partial_3\right) = R \partial_i + S(S(\partial_i)). \qedhere \]
\end{proof}

There is also a scalar version of this result for the eigenvalues of the operators $S$, which will be an important ingredient in our estimates later.

\begin{Cor}\label{eigenvalRicatti}
Let $\kappa_1, \kappa_2$ be the eigenvalues of $S$ (which for fixed $t$ are also the principal curvatures of the tubular hypersurfaces). We assume that the eigenvalues are distinct. Then for $i = 1,2$ they satisfy the equation
\[ \frac{\partial \kappa_i}{\partial t} = \kappa_i^2 + \rho_i, \]
where $\rho_i$ is the sectional curvature of  the plane spanned by $\partial_3$ and the  $\kappa_i$-eigenspace of $S$. 
\end{Cor}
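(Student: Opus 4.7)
The plan is to extract a scalar equation from the operator Ricatti equation \eqref{RicattiS} by testing it against unit eigenvectors of $S$. First I would invoke the assumption that the eigenvalues of $S$ are distinct to obtain, locally near any point, smooth unit eigenvector fields $X_1, X_2$ of the shape operators $S$ with $SX_i = \kappa_i X_i$ and $|X_i|=1$. The distinctness hypothesis is exactly what guarantees a smooth eigendecomposition, so this step, while routine, is where that assumption is used.

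With such fields in hand, I would pair the Ricatti equation $\nabla_{\partial_3}S = S^2 + R$ with $X_i$ on both sides and take the inner product with $X_i$. Expanding the left-hand side using the definition of the induced connection on $Hom(T\widetilde{M})$,
\[ \langle (\nabla_{\partial_3}S)X_i, X_i \rangle = \langle \nabla_{\partial_3}(SX_i), X_i \rangle - \langle S(\nabla_{\partial_3}X_i), X_i \rangle, \]
and using that $SX_i = \kappa_i X_i$ together with symmetry of $S$, the two ``transport'' terms cancel (they are equal, because $\langle \nabla_{\partial_3}X_i, X_i \rangle = \tfrac{1}{2}\partial_3 |X_i|^2 = 0$ anyway, so each equals $\kappa_i \langle \nabla_{\partial_3}X_i, X_i \rangle = 0$). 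This leaves $\langle (\nabla_{\partial_3}S)X_i, X_i \rangle = \partial_3 \kappa_i$.

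On the right-hand side, $\langle S^2 X_i, X_i \rangle = \langle SX_i, SX_i \rangle = \kappa_i^2$ since $|X_i|=1$, and
\[ \langle R X_i, X_i \rangle = \widetilde{Rm}(X_i, \partial_3, \partial_3, X_i). \]
By the Tubular Gauss Lemma, $X_i \perp \partial_3$ and both are unit, so the pair $(X_i, \partial_3)$ is orthonormal and this quantity is exactly the sectional curvature of the plane spanned by $\partial_3$ and $X_i$, which by definition is $\rho_i$. Combining the two sides yields $\partial_3 \kappa_i = \kappa_i^2 + \rho_i$, which is the claim.

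The only genuine subtlety is the smoothness of the eigenframe; once that is granted, the calculation is a direct consequence of Lemma \ref{RicattiLem} and the fact that the eigenspaces of $S$ are orthogonal to $\partial_3$. I would not anticipate any technical obstacle beyond the distinct-eigenvalue hypothesis, which is stated explicitly.
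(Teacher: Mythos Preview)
Your proof is correct and follows essentially the same approach as the paper: pair the operator Ricatti equation against a unit eigenvector $\xi_i$ and observe that the transport terms $\kappa_i\langle \nabla_{\partial_3}\xi_i,\xi_i\rangle$ and $\langle S(\nabla_{\partial_3}\xi_i),\xi_i\rangle$ agree by symmetry of $S$. Your additional remark that each of these terms actually vanishes individually (since $|X_i|=1$) is a nice observation, but the paper simply uses the cancellation.
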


\begin{proof}
$S$ is symmetric, therefore on each tubular hypersurface there is an orthonormal frame of eigenvectors for $S$ which we denote $\xi_i$, $i = 1,2$. Applying the equation \eqref{RicattiS} to $\xi_i$ we get
\[ \frac{\partial \kappa_i}{\partial t}\xi_i + \kappa_i \tN_{\partial_3}\xi_i - S\left( \tN_{\partial_3}\xi_i\right) = \kappa_i^2\xi_i +  \widetilde{R}(\xi_i,\partial_3)\partial_3. \]
Taking the inner product of this with $\xi_i$ gives the result, because symmetry of $S$ implies that
\[ \kappa_i \la  \tN_{\partial_3}\xi_i, \xi_i \ra =  \la S\left( \tN_{\partial_3}\xi_i\right), \xi_i \ra. \qedhere \] 
\end{proof}

Now we are ready to prove the main result we will use about Fermi coordinates, which is the expansion of part of the ambient metric and Christoffel symbols in the $t$ direction.

\begin{Prop}\label{ambientexpansion}
We work in Fermi coordinates. For $i,j \in \{1,2\}$ the  metric components $\tg_{ij}$ and Christoffel symbols $\tG_{ij}^3$ have the expansions
\begin{eqnarray*}
\tg_{ij}(x,t) & = & g_{ij}(x) - 2h_{ij}(x)t + \tQ_{ij}(x,t) \\
\tG_{ij}^3(x,t) & = & h_{ij}(x) - p_{ij}(x)t + \tQ'_{ij}(x,t),  
\end{eqnarray*}
where $g$ and $h$ are the metric and second fundamental form on $\Sigma$, and $p$ is a symmetric two-tensor given by $p_{ij} = \la (S^2 - R)\partial_i, \partial_j \ra$. Furthermore $\tQ_{ij}$ and $\tQ_{ij}'$ satisfy the Lipschitz estimates 
\begin{eqnarray*}
|\tQ_{ij}(x,t_1) - \tQ_{ij}(x,t_2)| & \leq & \sup_{|t| \leq 1/4}\left|\frac{\partial^2\tg_{ij}}{\partial t^2}(x,t) \right|\left(|t_1| + |t_2|\right) |t_1 - t_2| \\
|\tQ'_{ij}(x,t_1) - \tQ'_{ij}(x,t_2)| & \leq &  \sup_{|t| \leq 1/4}\left|\frac{\partial^3\tg_{ij}}{\partial t^3}(x,t) \right| \left(|t_1| + |t_2|\right) |t_1 - t_2|,
\end{eqnarray*}
for all $|t_1|,|t_2| \leq 1/4$. 
\end{Prop}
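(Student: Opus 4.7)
The plan is to compute the Taylor expansion of $\tg_{ij}(x,t)$ and $\tG_{ij}^3(x,t)$ about $t=0$ to first order, using the Fermi coordinate structure to identify the derivatives geometrically, and then to derive the Lipschitz bounds on the remainders via the integral form of Taylor's theorem. The main leverage comes from the Ricatti equation (Lemma \ref{RicattiLem}): the first $t$-derivative of the shape operator is algebraic in $S$ and $R$, which is exactly what produces the tensor $p_{ij} = \la (S^2 - R)\partial_i, \partial_j \ra$ claimed in the statement.

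First I would use the Tubular Gauss Lemma to observe that $\tg_{33} \equiv 1$ and $\tg_{i3} \equiv 0$ for $i = 1, 2$, which makes the block $(\tg^{ab})$ of the inverse metric likewise block-diagonal and collapses the Christoffel formula to
\[ \tG_{ij}^3 = -\tfrac{1}{2}\partial_t \tg_{ij}. \]
Next, differentiating the tangential metric components, and using that coordinate fields commute together with $\tN_{\partial_3}\partial_i = \tN_{\partial_i}\partial_3 = -S(\partial_i)$ and the symmetry of $S$, I would compute
\[ \partial_t \tg_{ij}(x,t) = -2\la S_t(\partial_i), \partial_j \ra, \]
where $S_t$ is the shape operator of $\Sigma_t$. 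Evaluating at $t = 0$ identifies the first-order coefficient of $\tg_{ij}$ as $-2h_{ij}$ and gives the zeroth-order coefficient of $\tG_{ij}^3$ as $h_{ij}$.

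For the first-order coefficient of $\tG_{ij}^3$, I would differentiate $\la S_t(\partial_i), \partial_j\ra$ once more in $t$, expanding $\tN_{\partial_3}(S(\partial_i)) = (\tN_{\partial_3}S)(\partial_i) + S(\tN_{\partial_3}\partial_i)$ and substituting the Ricatti identity $(\tN_{\partial_3}S)(\partial_i) = S^2(\partial_i) + R(\partial_i)$ from Lemma \ref{RicattiLem}. The $S^2$ term from $\tN_{\partial_3} S$ cancels the $-S^2$ term coming from $S(\tN_{\partial_3}\partial_i) = -S^2(\partial_i)$, leaving
\[ \partial_t \la S_t(\partial_i), \partial_j\ra = \la R(\partial_i), \partial_j\ra - \la S^2(\partial_i), \partial_j\ra, \]
so $\partial_t \tG_{ij}^3 \big|_{t=0} = -p_{ij}(x)$, as claimed.

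Finally, defining $\tQ_{ij}$ and $\tQ'_{ij}$ as the second-order Taylor remainders of $\tg_{ij}$ and $\tG_{ij}^3$ in $t$, I would write the remainders in integral form
\[ \tQ_{ij}(x,t) = \int_0^t (t-s) \, \partial_s^2 \tg_{ij}(x,s)\,ds, \]
and similarly for $\tQ'_{ij}$ with $\partial_s^3 \tg_{ij}$ (since $\tG_{ij}^3 = -\tfrac{1}{2}\partial_t \tg_{ij}$). These remainders vanish to second order at $t=0$, so $\partial_t \tQ_{ij}(x,\tau) = \int_0^{\tau} \partial_s^2 \tg_{ij}(x,s)\,ds$, and the Lipschitz estimate follows from the mean value theorem applied to
\[ \tQ_{ij}(x,t_1) - \tQ_{ij}(x,t_2) = \int_{t_2}^{t_1} \partial_t \tQ_{ij}(x,\tau)\,d\tau, \]
bounding $|\partial_t \tQ_{ij}(x,\tau)| \leq \sup_{|t| \leq 1/4}|\partial_t^2 \tg_{ij}(x,t)|\,|\tau|$ and using $|\tau| \leq |t_1| + |t_2|$ on the interval of integration; the analogous argument with one more derivative handles $\tQ'_{ij}$. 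The only genuine subtlety, and therefore the main point requiring care, is the Ricatti-based cancellation of $S^2$ in the computation of $\partial_t \tG_{ij}^3|_{t=0}$, since the Ricatti equation is exactly what converts the a priori intractable $t$-derivative of a shape operator into the algebraic expression $p_{ij}$.
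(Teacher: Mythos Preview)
Your proposal is correct and follows essentially the same route as the paper: compute $\partial_t\tg_{ij}$ and $\partial_t^2\tg_{ij}$ in terms of $S$ and $R$, reduce $\tG_{ij}^3$ to $-\tfrac{1}{2}\partial_t\tg_{ij}$ via the block form of the metric in Fermi coordinates, and obtain the Lipschitz bounds from the first-order Taylor remainder with the mean value inequality. The only cosmetic difference is that the paper gets $\partial_t^2\tg_{ij} = 2\la (S^2 - R)\partial_i,\partial_j\ra$ directly from the identity $R\partial_i = -\tN_{\partial_3}\tN_{\partial_i}\partial_3$, whereas you route the same computation through the Ricatti equation; the two are equivalent.
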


\begin{proof}
We begin by computing the first three partial derivatives of $\tg_{ij}(x,t)$ with respect to $t$. In these calculations we repeatedly use standard facts about the Levi-Civita connection, such as its symmetry and compatibility with the metric, as well as the symmetry of $S$. Thus
\begin{equation} \frac{\partial \tg_{ij}}{\partial t}(x,t) = - 2\la S \partial_i, \partial_j \ra. \label{FirstDerivative} \end{equation}
Then since $R\partial_i = - \tN_{\partial_3}\tN_{\partial_i}\partial_3$ we find
\begin{equation} \frac{\partial^2 \tg_{ij}}{\partial t^2}(x,t) = 2 \la (S^2 - R)\partial_i, \partial_j \ra. \label{SecondDerivative} \end{equation}
For the third derivative we begin with
\[ \frac{\partial^3 \tg_{ij}}{\partial t^3}(x,t) = 2\la \tN_{\partial_3}\left((S^2 - R)\partial_i\right), \partial_j \ra - 2\la S(S^2 - R)\partial_i, \partial_j \ra. \]
Expanding 
\[ \tN_{\partial_3}\left((S^2 - R)\partial_i\right) = \left(\tN_{\partial_3}(S^2 - R)\right)\partial_i - (S^2 - R)(S\partial_i), \]
and using the Ricatti equation to compute $\tN_{\partial_3}S^2= 2S^2 + RS + SR$ gives
\begin{equation} \frac{\partial^3 \tg_{ij}}{\partial t^3}(x,t) = 2\la \left(2(RS + SR) - \tN_{\partial_3}R\right)\partial_i, \partial_j \ra. \label{ThirdDerivative} \end{equation}
Now any twice differentiable function $\phi$ on $(-1/4,1/4)$ has an expansion
\[ \phi(t) = \phi(0) + \phi'(0)t + q(t), \]
where, by the mean value inequality, the function $q(t)$ satisfies the estimate
\[ |q(t_1) - q(t_2)| \leq \sup_{|t| \leq 1/4}\left|\phi''(t) \right|\left(|t_1| + |t_2|\right) |t_1 - t_2|. \]
The expansion for $\tg_{ij}$ follows from this and \eqref{FirstDerivative}. Finally the standard formula for Christoffel symbols in terms of the inverse and first derivatives of the metric gives us
\[ \tG_{ij}^3 = -\frac{1}{2} \frac{\partial \tg_{ij}}{\partial t}. \]
Hence we obtain the expansion for $\tG_{ij}^3$ from the formulae \eqref{FirstDerivative}, \eqref{SecondDerivative}. 
\qedhere
\end{proof}

\subsubsection*{A preliminary expansion of $g_w$ and $h_w$.}

Now we return to the surface $\Sigma_w$ for which we require expansions of the metric $g_w$ and second fundamental form $h_w$. Recall that in Fermi coordinates, a parametrisation of this surface is given by
\[ (x^1, x^2) \mapsto (x^1,x^2, w(x^1,x^2)). \]
Therefore the tangent vectors are $\partial_1 + w_1\partial_3$ and $\partial_2 + w_2\partial_3$, where we use the notation $w_i = \frac{\partial w}{\partial x^i}$ for brevity. A succinct expression for the unit normal $\nu_w$ to $\Sigma_w$ can be given by introducing the vector
\[ Dw := (\Psi_w)_*(\nabla_{\Sigma_w} w) = (g_w)^{ij}w_i(\partial_j + w_j\partial_3) \]
that is, the pushforward of the gradient $\nabla_{\Sigma_w} w$ of $w$ on $\Sigma_w$ by the embedding $\Psi_w$. Then one can easily verify that 
\[ \nu_w = \frac{\partial_3 - Dw}{\sqrt{1 - (g_w)^{ij}w_iw_j}}. \]

A straighforward calculation using the Tubular Gauss Lemma proves
\begin{Lem}\label{PrelimForms}
The metric and second fundamental of $\Sigma_w$ are
\begin{eqnarray*}
(g_w)_{ij}(x) & = & \tg_{ij}(x,w) + w_iw_j \\
\sqrt{1 - (g_w)^{ij}w_iw_j} \cdot (h_w)_{ij}(x) & = & \tG^3_{ij}(x,w) + w_{ij} - (g_w)^{kl}w_k\la \tN_{\partial_i}\partial_j, \partial_l \ra \\
&& +  (g_w)^{kl}w_k\left\{ w_i \tG_{jl}^3 + w_j\tG_{il}^3 - w_l\tG_{ij}^3 - w_lw_{ij}\right\}. 
\end{eqnarray*}
\end{Lem}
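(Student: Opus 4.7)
The plan is to read off both identities directly from the special structure of Fermi coordinates, using only the Tubular Gauss Lemma and the Koszul/compatibility formula for the connection; no further geometric input is needed.

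First, I would fix the parametrisation $(x^1,x^2)\mapsto(x^1,x^2,w(x^1,x^2))$ of $\Sigma_w$ in Fermi coordinates, so that the coordinate tangent vectors are $e_i := \partial_i + w_i\partial_3$. The induced metric is immediate: by the Tubular Gauss Lemma $\langle\partial_3,\partial_l\rangle = 0$ and $|\partial_3|^2 = 1$, so
\[ (g_w)_{ij} = \langle e_i, e_j\rangle = \langle\partial_i,\partial_j\rangle + w_iw_j = \tg_{ij}(x,w) + w_iw_j, \]
which gives the first line of the lemma. At the same time one checks the formula for $\nu_w$ stated just above Lemma \ref{PrelimForms}: the vector $\partial_3 - Dw$ is orthogonal to each $e_i$ (using the definition of $Dw$ as the $\Psi_w$-pushforward of $\nabla_{\Sigma_w}w$), and a short computation of its length in the $(g_w)$-metric produces the denominator $\sqrt{1 - (g_w)^{ij}w_iw_j}$.

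For the second fundamental form I would use $(h_w)_{ij} = \langle\tN_{e_i}e_j,\nu_w\rangle$ and first compute the numerator. Expanding and using $\partial_3 w = 0$, the symmetry $[\partial_i,\partial_3]=0$ of coordinate vectors, and $\tN_{\partial_3}\partial_3 = 0$ (since $t$-lines are geodesics, by the Tubular Gauss Lemma), one obtains
\[ \tN_{e_i}e_j = \tN_{\partial_i}\partial_j + w_i\tN_{\partial_j}\partial_3 + w_j\tN_{\partial_i}\partial_3 + w_{ij}\partial_3. \]
The key auxiliary computation is the pairing $\langle \tN_{\partial_j}\partial_3,\partial_l\rangle = -\tG_{jl}^3$, which follows from compatibility of the connection with the metric together with $\langle\partial_3,\partial_l\rangle = 0$. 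Combined with $\langle\tN_{\partial_i}\partial_j,\partial_3\rangle = \tG_{ij}^3$ and $\langle\tN_{\partial_j}\partial_3,\partial_3\rangle = \tfrac{1}{2}\partial_j|\partial_3|^2 = 0$, these give
\[ \langle \tN_{e_i}e_j,\partial_3\rangle = \tG_{ij}^3 + w_{ij}, \qquad \langle \tN_{e_i}e_j,\partial_l\rangle = \langle\tN_{\partial_i}\partial_j,\partial_l\rangle - w_i\tG_{jl}^3 - w_j\tG_{il}^3. \]

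Finally I would assemble: writing $\nu_w = (\partial_3 - Dw)/\sqrt{1-(g_w)^{ij}w_iw_j}$ with $Dw = (g_w)^{kl}w_k(\partial_l + w_l\partial_3)$, the numerator $\langle \tN_{e_i}e_j, \partial_3 - Dw\rangle$ is exactly the sum of the pairings above weighted by $-(g_w)^{kl}w_k$ and $-(g_w)^{kl}w_kw_l$ respectively, which collects precisely into the right-hand side claimed in the lemma. The calculation is entirely mechanical; the only place one has to be careful is the bookkeeping when replacing $Dw$ by its expression in the $e_l$-frame, because the term proportional to $w_l$ interacts with the already-computed $\partial_3$-component to produce the final $-w_lw_{ij}$ term inside the braces. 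There is no essential obstacle — everything reduces to two short applications of the Tubular Gauss Lemma plus metric compatibility.
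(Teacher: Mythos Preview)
Your argument is correct and is exactly the ``straightforward calculation using the Tubular Gauss Lemma'' that the paper invokes without writing out; the ingredients you identify (orthogonality $\langle\partial_3,\partial_l\rangle=0$, $|\partial_3|^2=1$, $\tN_{\partial_3}\partial_3=0$, and metric compatibility giving $\langle\tN_{\partial_j}\partial_3,\partial_l\rangle=-\tG_{jl}^3$) are precisely what is needed, and your bookkeeping in the final assembly produces the stated formula term for term.
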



For the moment we will pause in the calculation of the mean curvature, and collect certain estimates relating to the geometry of $\hr$, the horizontal catenoids, and Fermi coordinates defined relative to the $(s,\theta)$ coordinates on the horizontal catenoids. This allows us to give a simpler treatment of the otherwise complicated quantities that will contribute to the non-linear error in the expansion of the mean curvature. 

\subsection{Estimates for error terms}

In computing an expansion for the mean curvature operator we'll need control over the errors $\tQ_{ij}$ and $\tQ'_{ij}$ - and also the main terms - in the expansions in Proposition \ref{ambientexpansion}. More precisely, we need estimates on these terms that hold both uniformly on the ends of horizontal catenoids, and uniformly for all small $t$. Our strategy to obtain these begins with the observation that all of these terms are the components of tensors defined in terms of the operators $R$ and $S$. Therefore, if one can estimate the eigenvalues of these operators, one will have estimates for these terms.

\begin{Lem}\label{EvalBound}
The eigenvalues of $R$ are bounded in $[-1,0]$. The eigenvalues of $S$ are bounded on a uniform tubular neighbourhood of the ends, and uniformly for all small $\e$. More precisely, there exists a constant $C > 0$ such that for all small $\e > 0$, we have
\[ |\kappa_i(s,\theta,t)| \leq C \hspace{5pt} \mbox{ for all } (s,\theta,t) \in [S_{\e},\infty)\times \mathbb{S}^1\times [-1/4,1/4], i = 1,2. \]
\end{Lem}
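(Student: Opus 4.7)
The plan is to handle the two claims separately. The bound on eigenvalues of $R$ is essentially immediate from the structural result of Lemma \ref{Rlem} combined with the sectional curvature formula of Lemma \ref{seccurv}; the real work is in bounding the eigenvalues of $S$. For this I will first establish the bound at $t = 0$ (on the surface itself) using Lemma \ref{principalcurvest}, and then propagate it to the full tubular neighbourhood via the Ricatti equation of Corollary \ref{eigenvalRicatti}.

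For the eigenvalues of $R$: at any point of the tubular neighbourhood, each eigenvalue is, by Lemma \ref{Rlem}, the sectional curvature in $\hr$ of some two-plane orthogonal to the tubular hypersurface through that point. By Lemma \ref{seccurv}, any such sectional curvature has the form $-\la \nu, \partial_z \ra^2$, hence lies in $[-1, 0]$.

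For eigenvalues of $S$ at $t = 0$: the truncation $s \geq S_\e$ is defined precisely by $\e \cosh S_\e = 1$, so on the end we have $\cosh^{-2} s \leq \e^2$. Substituting into Lemma \ref{principalcurvest} gives
\[ \e^{-2}\cosh^{-4} s + \e^{-1}\cosh^{-2} s \leq \e^2 + \e, \]
whence $|\kappa_1(s,\theta,0) - 1| + |\kappa_2(s,\theta,0)| \leq C(\e^2 + \e)$. For all sufficiently small $\e$ this yields the uniform bound $|\kappa_i(s,\theta,0)| \leq 2$ on the whole end.

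Finally, for eigenvalues of $S$ on the thickened tube $|t| \leq 1/4$: I will use Corollary \ref{eigenvalRicatti} to obtain $\dot\kappa_i = \kappa_i^2 + \rho_i$ with $|\rho_i| \leq 1$ (by the $R$-bound), so $|\dot\kappa_i| \leq \kappa_i^2 + 1$. A standard comparison with the solution $Y(t) = \tan(\arctan 2 + |t|)$ of $\dot Y = Y^2 + 1$, $Y(0) = 2$, which remains bounded on $[-1/4, 1/4]$ since $\pi/2 - \arctan 2 = \arctan(1/2) > 1/4$, then gives the uniform bound $|\kappa_i(s,\theta,t)| \leq Y(|t|) \leq C$. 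The main technical nuisance is that Corollary \ref{eigenvalRicatti} requires the eigenvalues of $S$ to be distinct; at $t = 0$ on the end they are near $1$ and $0$ respectively, hence distinct for all sufficiently small $\e$, and this persists in a $t$-neighbourhood by continuity. Should they collide somewhere in $[-1/4, 1/4]$, I would instead work directly with the operator Ricatti equation \eqref{RicattiS} of Lemma \ref{RicattiLem}, deriving a scalar differential inequality for the operator norm of $S$ that supports the same comparison argument.
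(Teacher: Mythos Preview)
Your proof is correct and follows essentially the same strategy as the paper: bound $R$ via Lemma~\ref{Rlem} and the sectional curvature range of $\hr$, bound $S$ at $t=0$ via Lemma~\ref{principalcurvest} and the truncation condition, then propagate along $t$ by ODE comparison with the scalar Ricatti equation of Corollary~\ref{eigenvalRicatti}. The only difference is in the choice of comparison ODEs: the paper uses the sharper two-sided information $\rho_i \in [-1,0]$ to sandwich $\kappa_i$ between solutions of $\dot\kappa = \kappa^2$ and $\dot\kappa = \kappa^2 - 1$ (which it integrates explicitly), whereas you use only $|\rho_i| \leq 1$ to compare with $\tan$; your version is slightly cruder but entirely adequate, and your remark on the distinctness hypothesis is more careful than the paper's own treatment.
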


\begin{proof}
In Lemma \ref{Rlem} we showed that the eigenvalues of $R$ are sectional curvatures in $\hr$, hence the first claim. For the second, note that Lemma \ref{principalcurvest} implies the estimates $\kappa_1(s,\theta,0) = 1 + \mathcal{O}(\epsilon)$ and $\kappa_2(s,\theta,0) = \mathcal{O}(\epsilon)$, uniformly on the ends. To leverage these from $t = 0$ to estimates for $t \in [-1/4, 1/4]$ we use the Ricatti equation satisfied by the eigenvalues (Corollary \ref{eigenvalRicatti}). 

For brevity of notation write $\kappa_i(t)$ for $\kappa_i(s,\theta, t)$. Now the functions $\rho_i$ in the Ricatti equation
\[ \frac{\partial \kappa_i}{\partial t} = \kappa_i^2 + \rho_i \]
satisfy the bounds $-1 \leq \rho_i \leq 0$, because they are sectional curvatures in $\hr$. Hence we define $\ok_i$ and $\uk_i$ as solutions to the initial value problems
\[ \frac{\partial \ok_i}{\partial t} = \ok_i^2, \hspace{5pt} \ok_i(0) = \kappa_i(0); \hspace{10pt} \frac{\partial \uk_i}{\partial t} = \uk_i^2 - 1, \hspace{5pt} \uk_i(0) = \kappa_i(0). \]
By comparison of the equations they satisfy, $\uk_i(t) \leq \kappa_i(t) \leq \ok_i(t)$ for all small $t \geq 0$, and $\ok_i(t) \leq \kappa_i(t) \leq \uk_i(t)$ for all small $t \leq 0$. Hence
\[ |\kappa_i(t)| \leq \max \{ |\ok_i(t)|, |\uk_i(t)| \} \mbox{ for all small $t$, }i = 1,2. \]
Integrating the equations we find
\[ \ok_i(t) = \frac{\kappa_i(0)}{1 - \kappa_i(0)t}, \hspace{10pt} \uk_i(t) = \frac{\kappa_i(0) + 1 + (\kappa_i(0) - 1)e^{2t}}{\kappa_i(0) + 1 - (\kappa_i(0) - 1)e^{2t}}. \]
Then since $\kappa_1(0) = 1 + \mathcal{O}(\e)$ and $\kappa_2(0) = \mathcal{O}(\e)$, 
\[ \ok_1(t) = \frac{1 + \mathcal{O}(\e)}{1 - (1 + \mathcal{O}(\e))t}, \hspace{5pt} \ok_2(t) = \frac{\mathcal{O}(\e)}{1 - \mathcal{O}(\e)t}, \hspace{5pt} \uk_1(t) = \frac{2 + \mathcal{O}(\e)(1 + e^{2t})}{2 + \mathcal{O}(\e)(1 - e^{2t})}, \mbox{ and } \]
\[ \uk_2(t) = \frac{1 - e^{2t} + \mathcal{O}(\e)(1 + e^{2t})}{1 + e^{2t} + \mathcal{O}(\e)(1 - e^{2t})}. \]
Clearly these are all bounded when $\epsilon$ is sufficiently small and $|t| \leq 1/4$, and the result follows. \qedhere
\end{proof}

\begin{Lem}\label{nablaRvanish}
Let $\sigma_1(t) \geq \sigma_2(t)$ be the eigenvalues of $R(t)$. Then $\sigma_1(t) \equiv 0$, and
\[ \sigma_2(t) = \left \langle \partial_3, \xi \right \rangle^2 - 1. \]
Hence $\sigma_1$ and $\sigma_2$ are independent of $t$, and $\nabla_{\partial_3}R$ vanishes identically. 
\end{Lem}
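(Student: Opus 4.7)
The plan is to diagonalize $R(t)$ on $T\Sigma_t$ explicitly by exploiting the fact that the horizontal distribution $T\mathbb{H}^2$ and $T\Sigma_t$ are both $2$-dimensional subspaces of the $3$-dimensional $T_p\hr$, so they must meet in at least a line. Combining this with Lemmas \ref{seccurv} and \ref{Rlem} will give both eigenvalues in closed form; the $t$-independence and vanishing of $\tN_{\partial_3}R$ will then follow from elementary parallel-transport considerations, using that $\partial_z$ is parallel on $\hr$.

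To carry this out, I would first pick a unit horizontal vector $\xi_h \in T\Sigma_t$, which exists because $T\Sigma_t \cap T_p\mathbb{H}^2$ is at least one-dimensional, and let $\eta$ be a unit vector completing $\{\xi_h,\eta\}$ to an orthonormal basis of $T\Sigma_t$. A short direct computation using the explicit formula for the curvature tensor of $\hr$ recalled in Lemma \ref{seccurv} shows that $\la R\xi_h,\eta\ra = 0$, so $\xi_h$ and $\eta$ are both eigenvectors of $R$. Lemmas \ref{Rlem} and \ref{seccurv} then identify the eigenvalues with sectional curvatures: the eigenvalue on $\xi_h$ is $K_{\hr}(\mathrm{span}(\xi_h,\partial_3)) = -\la\eta,\partial_z\ra^2$, and the eigenvalue on $\eta$ is $-\la\xi_h,\partial_z\ra^2 = 0$, since $\xi_h$ is horizontal. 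Decomposing $\partial_z$ in the orthonormal frame $\{\xi_h,\eta,\partial_3\}$ and using $\la\partial_z,\xi_h\ra = 0$ yields $\la\eta,\partial_z\ra^2 = 1 - \la\partial_3,\partial_z\ra^2$. Hence $\sigma_1 = 0$ and $\sigma_2 = \la\partial_3,\partial_z\ra^2 - 1$, which matches the stated formula (reading $\xi$ as the vertical vector field $\partial_z$).

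For the remaining two assertions the key fact is that $\hr$ is the Riemannian product of a symmetric space and the flat real line, so $\partial_z$ is parallel on $\hr$ and $\hr$ is itself locally symmetric, i.e.\ $\tN \widetilde{R} \equiv 0$. The $t$-coordinate lines are unit-speed geodesics, so $\tN_{\partial_3}\partial_3 = 0$; combined with the parallelism of $\partial_z$ this gives $\frac{d}{dt}\la\partial_3,\partial_z\ra = 0$, proving $t$-independence of both eigenvalues. For $\tN_{\partial_3}R \equiv 0$, I would expand
\[ (\tN_{\partial_3}R)(X) = (\tN_{\partial_3}\widetilde{R})(X,\partial_3)\partial_3 + \widetilde{R}(X,\tN_{\partial_3}\partial_3)\partial_3 + \widetilde{R}(X,\partial_3)\tN_{\partial_3}\partial_3, \]
and observe that the first term vanishes by local symmetry of $\hr$ while the other two vanish because $\tN_{\partial_3}\partial_3 = 0$. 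A hands-on alternative is to write $R(X) = -|(\partial_3)_h|^2 X_h + \la X_h,(\partial_3)_h\ra (\partial_3)_h$ and note that each ingredient (the horizontal projection $(\cdot)_h = (\cdot) - \la\cdot,\partial_z\ra\partial_z$, the vector $(\partial_3)_h$, and its squared length) is $\partial_3$-parallel.

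I do not expect any serious obstacle. The only nontrivial geometric content is the elementary dimension-count observation that every $2$-plane in $T_p\hr$ meets the horizontal $2$-plane nontrivially, which pins one eigenvector of $R$ down as horizontal and forces $\sigma_1 = 0$; everything else follows mechanically from the product structure of $\hr$ and from $\partial_3$ being parallel along the normal geodesics.
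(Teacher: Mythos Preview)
Your argument is correct. For the eigenvalue computation, your approach and the paper's are really two sides of the same coin: you pick a unit \emph{horizontal} vector $\xi_h \in T\Sigma_t$ and complete it with $\eta$, while the paper starts from the tangential projection $\partial_z^T$ of $\partial_z$ onto $T\Sigma_t$ and completes with its orthogonal complement. Since $\la \partial_z^T, \xi_h \ra = 0$, your $\eta$ is (up to sign and normalization) the paper's $\partial_z^T$, so the resulting diagonalization is identical. One minor advantage of your dimension-count entry point is that it handles the degenerate case $\partial_3 \parallel \partial_z$ seamlessly, whereas the paper treats that case separately.

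The genuinely different step is your proof that $\tN_{\partial_3}R \equiv 0$. The paper builds a parallel orthonormal frame $\{\partial_3, \hat{\partial_z}^T, \zeta\}$ along the normal geodesic, checks that the eigenvectors are parallel, and expands $(\tN_{\partial_3}R)X$ directly in that frame. You instead invoke the local symmetry of $\hr$ (as a Riemannian product of space forms) to kill the $(\tN_{\partial_3}\widetilde{R})$ term in the Leibniz expansion of $\tN_{\partial_3}(\widetilde{R}(\cdot,\partial_3)\partial_3)$, with the remaining terms vanishing because $\partial_3$ is auto-parallel. This is shorter and more conceptual; the paper's frame computation, on the other hand, stays self-contained and does not appeal to the global fact $\tN\widetilde{R} = 0$.
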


\begin{proof}
Recall that the eigenvalues of $R(t)$ are sectional curvatures of two-planes spanned by $\partial_3$ and vectors tangent to the tubular hypersurfaces. Now the first case to consider is that $\partial_3$ is vertical, in which case the $t$-geodesic is a vertical geodesic and $\partial_3$ remains vertical for all $t$. Therefore any two plane spanned by $\partial_3$ is vertical, all of their sectional curvatures are zero, and $R(t)$ vanishes identically for all $t$. In this case all the assertions of the lemma certainly hold. 

In the second case we assume that $\partial_3$ is not vertical. Since $\partial_z$ is parallel we have
\[ \partial_t \left \langle \partial_3, \partial_z \right \rangle = \left \langle \nabla_{\partial_3}\partial_3, \partial_z \right \rangle + \left \langle \partial_3, \nabla_{\partial_3}\partial_z \right \rangle = 0, \] and so the angle between $\partial_3$ and $\partial_z$ is independent of $t$. Now we know that the sectional curvatures of $\mathbb{H}^2 \times \mathbb{R}$ are bounded between $-1$ and $0$, and it is clear that the $2$-plane spanned by $\partial_3$ and the tangential projection $\partial_z^T$ of $\partial_z$ onto the tangent space of the tubular hypersurface contains $\partial_z$. Therefore the sectional curvature of this plane is zero, the quadratic form associated to $R(t)$ is maximised in the direction of $\partial_z^T$, and so $\partial_z^T$ must be an eigenvector of $R(t)$ with eigenvalue $0$. 

To compute the second eigenvalue we observe that, since $R(t)$ is symmetric, the eigenspace for $\sigma_2(t)$ is orthogonal to $\partial_z^T$, and tangential to the tubular hypersurface. Hence the plane spanned by this eigenspace and $\partial_3$ is orthogonal to $\partial_z^T$, and $\sigma_2(t)$ is the sectional curvature of this plane. But Lemma \ref{seccurv} tells us that we can compute this sectional curvature as
\[ -\left( \left \langle \frac{\partial_z^T}{|\partial_z^T|}, \partial_z \right\rangle \right)^2 = - |\partial_z^T|^2 = \left \langle \partial_3, \partial_z \right \rangle^2 - 1. \]This establishes the formula for $\sigma_2(t)$. Since it only depends on $\left \langle \partial_3, \partial_z \right \rangle$ and since this is independent of $t$ as we have mentioned, this completes the proof of the assertions regarding the eigenvalues. 

To prove the vanishing of $\nabla_{\partial_3}R$ we begin by completing the orthonormal set of vector fields $\partial_3$ and $\hat{\partial_z}^T := \frac{\partial_z^T}{|\partial_z^T|}$ to a smooth orthonormal frame along the $t$-geodesic with a vector field $\zeta(t)$ along the curve. As already noted, $\zeta(t)$ is an eigenvector for $\sigma_2(t)$. Moreover $\zeta(t)$ is actually parallel along this geodesic, because $\partial_3$ and $\frac{\partial_z^T}{|\partial_z^T|}$ are. Now expand any vector field along the geodesic that is tangential to tubular hypersurfaces as $X(t) = a(t)\hat{\partial_z}^T + b(t)\zeta$. Using the facts just mentioned, and that $\sigma_1$ and $\sigma_2$ are independent of $t$, we compute
\begin{eqnarray*} (\nabla_{\partial_3}R)X & = & \nabla_{\partial_3}(RX) - R(\nabla_{\partial_3}X) \\
& = & \nabla_{\partial_3}(b(t)\sigma_2 \zeta) - R(a'(t)\hat{\partial_z}^T + b'(t)\zeta) = b'(t)\sigma_2 \zeta - b'(t)\sigma_2 \zeta = 0. \qedhere \end{eqnarray*}
\end{proof} 


\begin{Cor}\label{BoundsForForms}
There exists $t_0 > 0$ and a constant $C > 0$ independent of $\epsilon$, such that
\[ \sup_{\theta \in \mathbb{S}^1, |t| \leq t_0. k \in \{0,1,2,3\}} \left| \frac{\partial^k\tg_{ij}}{\partial t^k} (s,\theta,t)\right| \leq C\epsilon^2 \cosh^2s \hspace{5pt} \mbox{ for all } s \geq S_{\e}. \]
Note that this implies an estimate of the same form for $\tsff_{ij}$ and $\tp_{ij}$ too, by \eqref{FirstDerivative} and \eqref{SecondDerivative}. 
\end{Cor}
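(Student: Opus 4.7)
The plan is to reduce the estimate for $\partial_t^k \tg_{ij}$ to two ingredients: (i) the operator-level identities \eqref{FirstDerivative}, \eqref{SecondDerivative} and \eqref{ThirdDerivative} derived inside the proof of Proposition \ref{ambientexpansion}, which express each $t$-derivative as $\la T\partial_i, \partial_j\ra$ for a polynomial $T$ in $S$ and $R$; and (ii) the uniform operator-norm bounds for $S$ and $R$ furnished by Lemma \ref{EvalBound} together with the vanishing $\tN_{\partial_3}R \equiv 0$ from Lemma \ref{nablaRvanish}. Cauchy--Schwarz then controls each derivative by $\sqrt{\tg_{ii}\tg_{jj}}$, and a Gronwall argument propagates that quantity from its value at $t=0$, which is read off directly from the induced metric \eqref{stmetric}.

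In more detail, after discarding the $\tN_{\partial_3}R$ term via Lemma \ref{nablaRvanish}, one has
\[ \partial_t \tg_{ij} = -2\la S\partial_i,\partial_j\ra, \quad \partial_t^2 \tg_{ij} = 2\la (S^2-R)\partial_i,\partial_j\ra, \quad \partial_t^3 \tg_{ij} = 4\la (RS+SR)\partial_i,\partial_j\ra. \]
Each operator $T$ on the right is $\tg$-symmetric, so its operator norm equals its largest absolute eigenvalue. Lemma \ref{EvalBound} bounds the eigenvalues of $R$ in $[-1,0]$ and the principal-curvature eigenvalues of $S$ uniformly for $s\geq S_\e$ and $|t|\leq 1/4$, so $\|T\| \leq C$ uniformly in $\e$, $\theta$, $s\geq S_\e$ and $|t|\leq t_0 := 1/4$. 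Cauchy--Schwarz then yields
\[ |\partial_t^k \tg_{ij}(s,\theta,t)| \leq C\sqrt{\tg_{ii}(s,\theta,t)\,\tg_{jj}(s,\theta,t)}, \qquad k = 1,2,3. \]

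To turn this into the announced $\e^2\cosh^2 s$ bound, and also to cover $k=0$, I would specialise the $k=1$ estimate to $i=j$, obtaining $|\partial_t \tg_{ii}| \leq C\,\tg_{ii}$, and then apply Gronwall to conclude $\tg_{ii}(s,\theta,t) \leq e^{Ct_0}\,g_{ii}(s,\theta)$ for $|t|\leq t_0$. It then remains to verify the $t=0$ estimate $g_{ii} = \mathcal{O}(\e^2\cosh^2 s)$, which follows from \eqref{stmetric} together with the asymptotics $\alpha^{-2} = \mathcal{O}(\e)$ (from \eqref{epsilon}), $|\varphi'| = \sqrt{\alpha^2+\cos^2\varphi} = \mathcal{O}(\alpha)$, and $|\alpha-\varphi'| = \mathcal{O}(\alpha)$ (from Definition \ref{aux} and $\varphi'<0$): these give $g_{ss} = \cosh^2 s/[\alpha^2(\alpha-\varphi')^2] = \mathcal{O}(\e^2\cosh^2 s)$ and $g_{\theta\theta} = \cosh^2 s/[(\alpha-\varphi')^2(\varphi')^2] = \mathcal{O}(\e^2\cosh^2 s)$. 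The $k=0$ case is then immediate from $|\tg_{ij}| \leq \sqrt{\tg_{ii}\tg_{jj}}$.

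The one delicate ingredient is Lemma \ref{EvalBound}, which is what guarantees that $\|S\|$ stays bounded throughout the entire slab $|t|\leq t_0$ rather than only on $\Sigma_\e$ itself. That lemma is already in hand, proved via the Ricatti-comparison of Corollary \ref{eigenvalRicatti} against the neck-independent initial data $\kappa_1(0) = 1 + \mathcal{O}(\e)$, $\kappa_2(0) = \mathcal{O}(\e)$ from Lemma \ref{principalcurvest}. With that input established, the remainder of the argument is a routine assembly of operator-norm bounds, Cauchy--Schwarz, and the asymptotic analysis of the catenoid's induced metric.
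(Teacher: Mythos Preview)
Your proposal is correct and follows essentially the same route as the paper: both use the identities \eqref{FirstDerivative}--\eqref{ThirdDerivative}, discard the $\tN_{\partial_3}R$ term via Lemma \ref{nablaRvanish}, invoke the uniform eigenvalue bounds of Lemma \ref{EvalBound}, and apply Cauchy--Schwarz to reduce everything to controlling $\tg_{ii}(s,\theta,t)$ in terms of $g_{ii}(s,\theta)$. The only cosmetic difference is that you close the differential inequality $|\partial_t\tg_{ii}|\leq C\tg_{ii}$ by Gronwall, whereas the paper writes $\tg_{ii}(t)=g_{ii}+\tL_{ii}$ with $|\tL_{ii}|\leq C|t|\sup_{|\tau|\leq|t|}\tg_{ii}(\tau)$ and absorbs the error for $|t|$ small; these are equivalent.
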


\begin{proof}
We claim that once an estimate of this form is proved for $\tg_{ij}$, those for the derivatives follow. Indeed the formulae \eqref{FirstDerivative}, \eqref{SecondDerivative} and \eqref{ThirdDerivative} for the first three derivatives of $\tg_{ij}$ with respect to $t$ show that we can estimate them in terms of $\tg_{ij}$ and the operator norms of $R,S$ and $\nabla_{\partial_3}R$, by Cauchy-Schwarz' inequality. Moreover the operator norms of $R$ and $S$ are bounded uniformly in all the variables, by Lemma \ref{EvalBound}, and $\nabla_{\partial_3}R = 0$ by Lemma \ref{nablaRvanish}. This proves the claim. 

To obtain the estimate for $\tg_{ij}$, first note that
\[ \tg_{ij}(s,\theta,t) = g_{ij}(s,\theta) + \tL_{ij}(s,\theta,t), \] where, by the mean value inequality,
\[ |\tL_{ij}(s,\theta,t)| \leq \sup_{|\tau| \leq |t|}\left| \frac{\partial \tg_{ij}}{\partial t}(s,\theta, \tau)\right| \cdot |t|. \]
Since $\frac{\partial \tg_{ij}}{\partial t} = 2\la S\partial_i, \partial_j \ra$, Lemma \ref{EvalBound} gives
\begin{equation} |\tL_{ij}(s,\theta,t)| \leq C\sqrt{ \tg_{ii}(s,\theta,t) \tg_{jj}(s,\theta,t)}\cdot |t|. \label{LDiag} \end{equation}
When $i = j$, this yields $\tg_{ii}(s,\theta, t) = g_{ii}(s,\theta) +  \mathcal{O}(\tg_{ii}(s,\theta,t))\cdot|t|$, or in other words,
\[ (1 + \mathcal{O}(|t|))\tg_{ii}(s,\theta, t) = g_{ii}(s,\theta). \]
Since $g_{ij}(s,\theta) = \mathcal{O}(\epsilon^2\cosh^2s)$, we obtain the desired estimate in the case $i = j$. The general case then follows from the estimate when $i = j$ and \eqref{LDiag}. \qedhere
\end{proof}

\subsection{Proof of Proposition \ref{MeanCurvExp}}
For notational simplicity, we will use the symbols $Q_{-1}, Q_0$ and $Q_1$ to denote ''quadratic'' errors and the growth rates of their coefficients, as follows. 

\begin{Def}For $k \in \{-1,0,1\}$, a quantity will be denoted $Q_k(w)$ if $Q_k(0) = 0$, and there is a constant $C > 0$, independent of $\e$, such that for all $s \geq S_{\e}$ we have
\[ |Q_k(v) - Q_k(w)|_{0,\alpha,s} \leq C(\e^2 \cosh^2 s)^k(|v|_{2,\alpha,s} + |w|_{2,\alpha,s}
 )|v - w|_{2,\alpha,s}. \]
 Note that every error of the form $Q_k$ is also of the form $Q_l$ if $k \leq l$.  
\end{Def}

\begin{Lem}
The first and second fundamental form of $\Sigma_w$ are
\[ (g_w)_{ij} = g_{ij} - 2h_{ij}w + Q_1(w) \hspace{5pt} \mbox{ and } 
(h_w)_{ij}= h_{ij} - p_{ij}w + (\nabla_g^2w)_{ij} + Q_1(w), \]
where $\nabla_g^2w$ is the Hessian of $w$ on $\Sigma$. 
\end{Lem}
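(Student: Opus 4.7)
The plan is to substitute the ambient expansions of Proposition \ref{ambientexpansion} into the preliminary formulas of Lemma \ref{PrelimForms}, identify the explicit zeroth- and first-order terms in $w$, and verify that everything else fits a single $Q_1(w)$ error via Corollary \ref{BoundsForForms}.

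For $(g_w)_{ij}$ the expansion is essentially immediate. Substituting $\tg_{ij}(x,w) = g_{ij} - 2h_{ij}w + \tQ_{ij}(x,w)$ into $(g_w)_{ij} = \tg_{ij}(x,w) + w_iw_j$ produces $g_{ij} - 2h_{ij}w$ explicitly, while $\tQ_{ij}(x,w) + w_iw_j$ is quadratic in $w$ and its derivatives. By Proposition \ref{ambientexpansion} the Lipschitz coefficient of $\tQ_{ij}$ in $w$ is controlled by $\sup_{|t|\le 1/4}|\partial_t^2 \tg_{ij}|$, which Corollary \ref{BoundsForForms} bounds by $C\e^2\cosh^2 s$, placing $\tQ_{ij}(x,w)$ in the $Q_1$ class. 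The $w_iw_j$ term has a constant coefficient, so it trivially fits the $Q_1$ bound as well.

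For $(h_w)_{ij}$ the key identification is the emergence of the intrinsic Hessian from the preliminary formula. At $t = 0$ the Tubular Gauss Lemma gives $\tg_{l3}(x,0) = 0$ for $l \in \{1,2\}$, so $\la \tN_{\partial_i}\partial_j, \partial_l \ra\big|_{t=0} = g_{lm}\tG^m_{ij}(x,0)$ summed over $m \in \{1,2\}$. These tangential ambient Christoffel symbols coincide with the Christoffel symbols $\Gamma^m_{ij}$ of the induced metric, because the induced connection on $\Sigma$ is the tangential part of $\tN$. Consequently $(g_w)^{kl}w_k\la\tN_{\partial_i}\partial_j,\partial_l\ra$ equals $\Gamma^k_{ij}w_k$ plus an $O(w)$-times-$w_k$ remainder, so the combination $w_{ij} - (g_w)^{kl}w_k\la\tN_{\partial_i}\partial_j,\partial_l\ra$ produces $(\nabla_g^2 w)_{ij}$ plus a quadratic error. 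Together with $\tG^3_{ij}(x,w) = h_{ij} - p_{ij}w + \tQ'_{ij}(x,w)$, where $\tQ'_{ij}$ again lies in $Q_1$ (this time via the bound on $\partial_t^3 \tg_{ij}$), the stated linear terms $h_{ij} - p_{ij}w + (\nabla_g^2 w)_{ij}$ appear explicitly. The brace term $(g_w)^{kl}w_k\{w_i\tG^3_{jl} + w_j\tG^3_{il} - w_l\tG^3_{ij} - w_lw_{ij}\}$ is manifestly quadratic or higher in $w$ with uniformly bounded coefficients. Finally, for $w$ small in $\mathcal{C}^{2,\alpha}$ the factor $1/\sqrt{1 - (g_w)^{ij}w_iw_j}$ equals $1 + O((g_w)^{ij}w_iw_j)$, so multiplying out contributes only an additional $Q_1$ correction.

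The main obstacle is the H\"older-norm bookkeeping. Each quadratic remainder is a product of factors depending on $w$ and its derivatives up to order two, so verifying the $Q_1(w)$ Lipschitz bound requires the standard product rule for H\"older norms combined with the Lipschitz estimates in Proposition \ref{ambientexpansion} for $\tQ_{ij}$ and $\tQ'_{ij}$. The worst growth rate $\e^2\cosh^2 s$ in $s$ is supplied uniformly by Corollary \ref{BoundsForForms} for every $\tg_{ij}$-derivative that appears, so all errors can be absorbed into a single $Q_1(w)$.
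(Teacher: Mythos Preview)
Your proof is correct and follows essentially the same route as the paper: you substitute the expansions of Proposition~\ref{ambientexpansion} into the preliminary formulas of Lemma~\ref{PrelimForms}, identify the Hessian from $w_{ij} - (g_w)^{kl}w_k\la\tN_{\partial_i}\partial_j,\partial_l\ra$, and control all remaining terms via Corollary~\ref{BoundsForForms}. The paper is slightly more careful in distinguishing the error classes $Q_{-1}$, $Q_0$, $Q_1$ for the individual pieces (for instance it records $\sqrt{1-(g_w)^{ij}w_iw_j}=1+Q_{-1}(w)$ and the brace term as $Q_0(w)$ before absorbing everything into $Q_1$), whereas you lump them directly into $Q_1$; since $Q_{-1}\subset Q_0\subset Q_1$ this does not affect the argument.
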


\begin{proof}
It follows from Lemma \ref{PrelimForms} that $(g_w)_{ij} = \tg_{ij}(x,w) + Q_0(w)$. And combining the estimates of Proposition \ref{ambientexpansion} with the those of Corollary \ref{BoundsForForms} gives 
\[ \tg_{ij}(x,w) = g_{ij}(x) - 2h_{ij}(x)w + Q_1(w). \] Together these prove the expansion for $g_w$.

Going back to the preliminary expression of Lemma \ref{PrelimForms} we found
\begin{eqnarray}
\sqrt{1 - (g_w)^{ij}w_iw_j} \cdot (h_w)_{ij}(x) & = & \tG^3_{ij}(x,w) + w_{ij} - (g_w)^{kl}w_k\la \tN_{\partial_i}\partial_j, \partial_l \ra \label{hwref} \\
&& +  (g_w)^{kl}w_k\left\{ w_i \tG_{jl}^3 + w_j\tG_{il}^3 - w_l\tG_{ij}^3 - w_lw_{ij}\right\}. \nonumber
\end{eqnarray}
Once again combining the estimates of Proposition \ref{ambientexpansion} with the those of Corollary \ref{BoundsForForms} gives
\[ \tG_{ij}^3(x,w) = h_{ij}(x) - p_{ij}(x)w + Q_1(w) \hspace{5pt} \mbox{ and } \hspace{5pt}
(g_w)^{kl}w_k\la \tN_{\partial_i}\partial_j, \partial_l \ra = \Gamma_{ij}^k w_k + Q_0(w). \]
Furthermore $\sqrt{1 - (g_w)^{ij}w_iw_j} = 1 + Q_{-1}(w)$, whilst the expression on the second line in the formula \eqref{hwref} is $Q_0(w)$. Therefore we get
\[ (h_w)_{ij} = h_{ij} - p_{ij}w  + w_{ij} + \Gamma_{ij}^kw_k + Q_1(w). \]
Since $w_{ij} + \Gamma_{ij}^kw_k$ is the $ij^{th}$ component of the Hessian of $w$ this completes the proof. \qedhere
\end{proof}
 
\begin{proof}[Proof of Proposition \ref{MeanCurvExp}]
We work in matrix notation, so
\[ g_w = g - 2hw + Q_1(w), \hspace{5pt} \mbox{ and } \hspace{5pt} h_w = h - pw + \nabla_g^2w + Q_1(w). \]
Inverting the expansion for $g_w$ one obtains
\[ g_w^{-1} = g^{-1} + 2g^{-1}hg^{-1}w + Q_{-1}(w). \]
Hence
\[ g_w^{-1}h_w = g^{-1}h + g^{-1}\nabla_g^2w - g^{-1}pw + 2g^{-1}hg^{-1}hw + Q_0(w). \]
In applying the matrix trace, note that
\[trace(g^{-1}\nabla_g^2w) = \triangle w, \hspace{5pt} trace(g^{-1}p) = trace(S^2 - R) = |h|^2 - Ric(\nu), \]
and $(g^{-1}hg{-1}h)_{ij} = g^{ik}h_{kl}g^{lp}h_{pj}$, so
\[  trace(g^{-1}hg^{-1}h) = g^{ik}h_{kl}g^{lp}h_{pi} = h^{ip}h_{pi} = |h|^2. \]
Therefore, as claimed,
\[ \mathcal{M}(w) = \mathcal{M}(0) + \triangle w + (|h|^2 + Ric(\nu))w + \mathcal{Q}(w) \]

The error is of type $Q_0$ (rather than $Q_1$ or $Q_{-1}$) because each term that contributes to it is a product of a term from the expansion of $g_w^{-1}$, which has coefficients that are $\mathcal{O}(\e^{-2} \cosh^{-2}s)$, and a term from the expansion of $h_w$, which has coefficients that are $\mathcal{O}(\e^2 \cosh^2s)$. \qedhere
\end{proof}

\newpage

\section{Linear Analysis of the Jacobi Operator}

In the previous section we expanded the mean curvature operator on normal graphs as
\[ \mathcal{M}_{\e}(w) = \mathcal{M}_{\e}(0) + \mathcal{J}_{\e}w + \mathcal{Q}_{\e}(w). \]
Of course, $\mathcal{M}_{\e}(0)$ is the mean curvature of the horizontal catenoids, and thus identically $1/2$. Therefore a normal graph has constant mean curvature $1/2$ if and only if the function $w$ satisfies 
\[ \mathcal{J}_{\e}w = - \mathcal{Q}_{\e}(w). \]
To solve this equation we will invert the Jacobi operator $\mathcal{J}_{\e}$, reformulate it as a fixed point problem, and apply the contraction mapping principle. This requires that we choose suitable Banach spaces on which the Jacobi operator may be wholly or partially inverted.

We begin our treatment with an explicit calculation of the Jacobi operator in $(s,\theta)$ coordinates.

\begin{Prop}
\label{JacobiExactProp}
The Jacobi operator $\mathcal{J}_{\epsilon}$ is given by
\begin{equation}
\mathcal{J}_{\epsilon} = \frac{\alpha^2(\alpha - \varphi')^2}{\cosh^2s}\left(\partial_s^2 + \partial_{\theta}^2 + \frac{2}{\cosh^2s} + \alpha^{-2} E \right) 
\label{Jacobiexact}
\end{equation} where $E$ is a second order operator, independent of $s$ and $\e$, given by
\begin{equation}
E := \frac{(\cos(2\theta) + 1)}{2}\partial_{\theta}^2 - \frac{\sin(2\theta)}{2}\partial_{\theta} + \cos(2\theta).
\label{eplicitE}
\end{equation}
\end{Prop}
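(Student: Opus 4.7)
The plan is to compute the three contributions to $\mathcal{J}_{\epsilon} = \triangle + |h|^2 + \mathrm{Ric}(\nu)$ separately in $(s,\theta)$ coordinates and verify that they assemble into \eqref{Jacobiexact}. I would begin by rewriting the metric \eqref{stmetric} in the form $\lambda^2(s,\theta)\bigl(ds^2 + \mu(\theta)^2\, d\theta^2\bigr)$ with
\[ \lambda^2 = \frac{\cosh^2 s}{\alpha^2(\alpha-\varphi')^2}, \qquad \mu = \frac{\alpha}{\varphi'}, \]
noting that $\mu$ depends only on $\theta$. The standard formula $\triangle u = |g|^{-1/2}\partial_i(|g|^{1/2} g^{ij}\partial_j u)$ with $\sqrt{|g|} = \lambda^2 \mu$ then collapses to
\[ \triangle = \lambda^{-2}\Bigl[\partial_s^2 + \mu^{-2}\partial_\theta^2 - \mu^{-3}\mu'\,\partial_\theta\Bigr]. \]

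To match this with the principal part of \eqref{Jacobiexact}, I would unpack $\mu$ via Definition \ref{aux} and the change of variables $\theta = -\varphi(u)$. The ODE $(\varphi')^2 = \alpha^2 + \cos^2\varphi$ becomes $(\varphi')^2 = \alpha^2 + \cos^2\theta$, giving $\mu^{-2} = 1 + \alpha^{-2}\cos^2\theta$. Differentiating the ODE yields $\varphi'' = \tfrac12 \sin(2\theta)$, and the chain rule $\partial_\theta \varphi' = -\varphi''/\varphi'$ (using $d\theta/du = -\varphi'$) gives $\mu^{-3}\mu' = \sin(2\theta)/2\alpha^2$. Applying $\cos^2\theta = \tfrac12(1+\cos 2\theta)$ then produces exactly the second-order part $\partial_s^2 + \partial_\theta^2 + \alpha^{-2}E_{\text{2nd order}}$ of \eqref{Jacobiexact}, after the factor $\lambda^{-2}$ is pulled out.

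For the zeroth-order piece, I would use Lemma \ref{secplusricci} to write $\mathrm{Ric}(\nu) = -1 - K_{\hr}$, and combine the cmc condition $\kappa_1 + \kappa_2 = 1$ with the Gauss equation to get $|h|^2 = (\kappa_1 + \kappa_2)^2 - 2\kappa_1\kappa_2 = 1 - 2(K_\Sigma - K_{\hr})$. Hence
\[ |h|^2 + \mathrm{Ric}(\nu) = -2K_\Sigma + K_{\hr}, \]
and the task reduces to showing $\lambda^2(-2K_\Sigma + K_{\hr}) = \tfrac{2}{\cosh^2 s} + \alpha^{-2}\cos(2\theta)$. Substituting the explicit formulas from Lemmas \ref{seccurvcoord} and \ref{intrcurv}, the $\cosh^{-4}s$ terms combine cleanly into $2/\cosh^2 s$ once $\lambda^2$ is multiplied in. The remaining $\alpha^{-2}$-weighted terms, using $2\cos^2\theta - 1 = \cos(2\theta)$ and $1 - 2\sin^2\theta = \cos(2\theta)$, rearrange into
\[ -\alpha^{-2}\cos(2\theta)\cdot \frac{\cos^2\theta + 2\varphi'(\alpha - \varphi')}{(\alpha-\varphi')^2}. \]

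The hard part is verifying that this bracket equals $-1$, which is the pivotal identity: replacing $\cos^2\theta = (\varphi')^2 - \alpha^2$ in the numerator yields
\[ \cos^2\theta + 2\varphi'(\alpha - \varphi') = -\alpha^2 + 2\alpha\varphi' - (\varphi')^2 - (\alpha^2 - \cos^2\theta + (\varphi')^2 - 2(\varphi')^2) = -(\alpha - \varphi')^2, \]
so the bracket collapses and the expression becomes $+\alpha^{-2}\cos(2\theta)$, as required. Every other step in the argument is routine metric and curvature bookkeeping; this algebraic collapse, which couples the ODE for $\varphi$ with the specific geometry of the Daniel--Hauswirth parametrisation, is the one nontrivial ingredient.
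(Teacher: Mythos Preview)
Your proof is correct and follows essentially the same route as the paper's: compute the Laplacian directly from the metric \eqref{stmetric}, rewrite the potential as $K_{\hr} - 2K_\Sigma$ via Lemma~\ref{secplusricci}, the Gauss equation and the cmc condition, then substitute Lemmas~\ref{seccurvcoord} and~\ref{intrcurv} and simplify using the identity $(\varphi')^2 = \alpha^2 + \cos^2\theta$ (which the paper likewise flags as the essential ingredient). The middle expression in your displayed verification of the pivotal identity is garbled, but the substitution $\cos^2\theta = (\varphi')^2 - \alpha^2$ gives $\cos^2\theta + 2\varphi'(\alpha-\varphi') = (\varphi')^2 - \alpha^2 + 2\alpha\varphi' - 2(\varphi')^2 = -(\alpha-\varphi')^2$ in one clean step, so the conclusion stands.
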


\begin{proof}
As we have already shown, $\mathcal{J}_{\epsilon} = \triangle + |h|^2 + Ric(\nu)$. From the coordinate expression \eqref{stmetric} for the metric, we calculate the Laplacian to be
\[ \triangle  =   \frac{\alpha^2(\alpha - \varphi')^2}{\cosh^2s}\left(\partial_s^2 + \partial_{\theta}^2 + \frac{(\cos(2\theta) + 1)}{2\alpha^2} \partial_{\theta}^2 - \frac{\sin(2\theta)}{2\alpha^2}\partial_{\theta}\right). 
\]
To compute the potential we use the intrinsic curvature $K_{\Sigma}$ of $\Sigma$, and  the Gauss equation to write
\[ K_{\hr} = K_{\Sigma} - \kappa_1\kappa_2. \]
Recall that $\kappa_1, \kappa_2$ are the principal curvatures, and $K_{\hr}$ is the sectional curvature in $\hr$ of the tangent plane to $\Sigma$.
This and the constant mean curvature condition $\kappa_1 + \kappa_2 \equiv 1$ give
\[ |h|^2 = \kappa_1^2 + \kappa_2^2 = (\kappa_1 + \kappa_2)^2 - 2\kappa_1\kappa_2 = 1 + 2K_{\hr} - 2K_{\Sigma}. \] Since $K_{\hr} + Ric(\nu) = -1$, by Lemma \ref{secplusricci},
\[ |h|^2 + Ric(\nu) = 1 + 2K_{\hr} - 2K_{\Sigma} + Ric(\nu) = K_{\hr} - 2K_{\Sigma}. \] 
Finally, combining the formulae for $K_{\hr}$ and $K_{\Sigma}$ from Lemmas \ref{seccurvcoord} and \ref{intrcurv}, and making essential use of the identity $(\varphi')^2 = \alpha^2 + \cos^2 \theta$, one calculates that
\[ |h|^2 + Ric(\nu) = \frac{\alpha^2(\alpha - \varphi')^2}{\cosh^2s}\left(\frac{2}{\cosh^2s} + \frac{\cos(2\theta)}{\alpha^2}\right). \]
This and the expression for the Laplacian complete the proof. \qedhere
\end{proof} 

For applications to the mean curvature equation it will suffice to study the operator
\begin{equation} \mathcal{L}_{\e} := \partial_s^2 + \partial_{\theta}^2 + \frac{2}{\cosh^2s} + \alpha^{-2}E, 
\label{calLepsilon}
\end{equation}
which we shall also call the Jacobi operator. Notice that $\mathcal{L}_{\e}$ is a perturbation of 
\begin{equation}
\mathcal{L}_0 := \partial_s^2 + \partial_{\theta}^2 + \frac{2}{\cosh^2s}.
\label{calLzero}
\end{equation}
This operator is nothing other than the Jacobi operator on a minimal catenoid in euclidean space with its standard parametrisation. A good deal is known about this operator, and we will exploit these facts to study $\mathcal{L}_{\e}$. To begin with, a useful class of Banach spaces on which to study such operators are the weighted H\"older spaces \cite{PR}. The ones we shall work with are the standard ones on a half-cylinder. 

\begin{Def}
Fix $S > 0$, a non-negative integer $k$, $\alpha \in (0,1)$ and $\mu \in \mathbb{R}$. $\mathcal{C}^{k,\alpha}_{\mu}([S, \infty) \times \mathbb{S}^1)$ is defined as the set of $\mathcal{C}^{k,\alpha}_{loc}([S, \infty) \times \mathbb{S}^1)$ functions for which the following weighted norm is finite
\[ ||u||_{k, \alpha,\mu} := \sup_{s \geq S} (\cosh s)^{-\mu}|u|_{k,\alpha, s}. \]
Here $|u|_{k,\alpha, s}$ is the usual $\mathcal{C}^{k,\alpha}$ norm on $[s, s+1] \times \mathbb{S}^1$.
\end{Def}
A function belonging to $\mathcal{C}^{k,\alpha}_{\mu}([S, \infty) \times \mathbb{S}^1)$ satisfies an estimate of the form
\[ |u|_{k,\alpha, s} \leq c(\cosh s)^{\mu} \hspace{5pt} \mbox{ for all } s \geq S. \]
Thus a positive weight allows functions of limited exponential growth, a negative weight requires functions to decay at a sufficiently fast exponential rate, and $\mathcal{C}^{k,\alpha}_{\mu} \subset \mathcal{C}^{k,\alpha}_{\nu}$ if $\mu \leq \nu$. 

The reason for considering these \emph{weighted} H\"older spaces is that - except for a discrete, countable set of weights - operators such as $\mathcal{L}_{0}$ and $\mathcal{L}_{\e}$ are Fredholm on these spaces. The exceptional set of weights is the sequence of \emph{indicial roots} of the operator. These indicial roots describe all the possible asymptotic growth and decay rates of solutions to the homogeneous equation. There is a fundamental relationship between the weight, indicial roots and Fredholm index, which is an important consideration in our analysis. To better understand this we'll review the relationship in the simplest setting of the flat Laplacian on a half-cylinder. 

\subsection{Weights, indicial roots and Fredholm index: an example}

This example is a slight modification of an example in \cite{PR}. We denote the flat Laplacian on a half-cylinder $[S, \infty) \times \mathbb{S}^1$ by
\[ L_0 := \partial_s^2 + \partial_{\theta}^2. \]
Later we will need to understand a perturbation of $L_0$ to analyse the Jacobi operator.

Throughout we'll work with a fixed orthonormal basis $\{\psi_n\}_{n \in \mathbb{Z}}$ for $L^2(\mathbb{S}^1)$, such that
\[ \partial_{\theta}^2 \psi_n = -n^2\psi_n, \hspace{5pt} \mbox{ for all } n \in \mathbb{Z}. \]

Let 
\[ \mathcal{C}^{2,\alpha}_{\mu, D}([S, \infty) \times \mathbb{S}^1) := \left\{ u \in \mathcal{C}^{2,\alpha}_{\mu}([S, \infty) \times \mathbb{S}^1): u(S,\theta) = 0 \right\}.  \] 

The purpose of this discussion is to illustrate two main facts. First, consider the bounded operator
\begin{equation} L_0:  \mathcal{C}^{2,\alpha}_{\mu, D}([S, \infty) \times \mathbb{S}^1) \longrightarrow \mathcal{C}^{0,\alpha}_{\mu}([S, \infty) \times \mathbb{S}^1). \label{LoWithSpaces} \end{equation}
 As alluded to above, the operator \eqref{LoWithSpaces} is Fredholm, provided the weight $\mu$ is not an indicial root. In Proposition \ref{MappingYoga} we show that the indicial roots are the integers, and prove the Fredholm property in the case most relevant to us, when $\mu < 0$ and the operator is injective. It turns out that the cokernel is always non-trivial when the kernel is trivial. Furthermore the dimension of the cokernel increases each time the weight crosses below a negative indicial root. 

Proposition \ref{MappingYoga} implies that there is no choice of weight for which \eqref{LoWithSpaces} is invertible. In particular, when $\mu < 0$ it is not possible to find a solution $u \in \mathcal{C}^{2,\alpha}_{\mu}([S, \infty) \times \mathbb{S}^1)$ to the boundary value problem
\begin{equation} \left\{ \begin{array}{cc} L_0 u = f & \mbox{ in } [S,\infty) \times \mathbb{S}^1 \\
u(S,\theta) = 0 & \mbox{ on } \{S\} \times \mathbb{S}^1. \end{array} \right. \end{equation}
for every $f \in \mathcal{C}^{0,\alpha}_{\mu}([S, \infty) \times \mathbb{S}^1)$.
However, in Proposition \ref{LoRightInv} we prove that it is possible to partially invert it in the following sense. We can find, for every  $f \in \mathcal{C}^{0,\alpha}_{\mu}([S, \infty) \times \mathbb{S}^1)$, a unique $u \in \mathcal{C}^{2,\alpha}_{\mu}([S, \infty) \times \mathbb{S}^1)$ with the properties that $L_0u = f$ and $u$ vanishes on the boundary, except on the certain 'low modes' $\psi_{-k}, \cdots, \psi_0, \cdots, \psi_k$. The number of these low modes is the dimension of the cokernel of \eqref{LoWithSpaces}. 

In summary then, the closest one can come to inverting $L_0$ in this setting, is with the solution operator $G$ of Proposition \ref{LoRightInv}. This provides solutions to the equation $L_0 u = f$ for every $f$, but the price we pay is that $u$ may not have zero boundary data on the low modes. Finally - and this is the key point - the dimension of this space of low modes increases every time the weight crosses below a negative indicial root.


\begin{Prop}\emph{(\cite{PR})} 
If $\mu \notin \mathbb{Z}$, then
\begin{equation} L_0:  \mathcal{C}^{2,\alpha}_{\mu, D}([S, \infty) \times \mathbb{S}^1) \longrightarrow \mathcal{C}^{0,\alpha}_{\mu}([S, \infty) \times \mathbb{S}^1) \label{LoWithSpaces} \end{equation}
is a Fredholm operator. Let $k$ be the largest integer such that $0 \leq k < |\mu|$. 
\begin{enumerate}[(a)]
\item If $\mu < 0$ and not an integer, the operator is injective with $2k + 1$-dimensional cokernel. 
\item If $\mu > 0$ and not an integer, the operator is surjective with $2k + 1$-dimensional kernel.
\end{enumerate}
\label{MappingYoga}
 \end{Prop}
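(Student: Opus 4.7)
The plan is to prove this by Fourier decomposition in the $\theta$ variable, reducing the PDE to an infinite family of scalar ODEs on $[S,\infty)$ whose indicial structure is explicit. Write $u(s,\theta) = \sum_{n \in \mathbb{Z}} u_n(s) \psi_n(\theta)$ and $f(s,\theta) = \sum_{n \in \mathbb{Z}} f_n(s) \psi_n(\theta)$. Since $L_0 \psi_n = (\partial_s^2 - n^2)\psi_n$, the equation $L_0 u = f$ is equivalent to the decoupled system $u_n'' - n^2 u_n = f_n$ on $[S,\infty)$, each with Dirichlet condition $u_n(S) = 0$. The homogeneous solutions are $e^{\pm ns}$ for $n \neq 0$ and $\{1,s\}$ for $n = 0$, so the indicial roots are precisely the integers, explaining the exceptional set $\mu \in \mathbb{Z}$.

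\textbf{Case $\mu < 0$, not an integer.} Let $k$ be the largest integer with $k < |\mu|$, so $-(k+1) < \mu < -k$. First I would prove injectivity: for a homogeneous solution in the weighted space, each mode $u_n$ is a linear combination of $e^{\pm ns}$ (or $1,s$ if $n=0$), which lies in $\mathcal{C}^{2,\alpha}_\mu$ only if it vanishes identically for $|n| \leq k$; for $|n| > k$, only $e^{-|n|s}$ lies in the space, and the boundary condition $u_n(S) = 0$ forces it to vanish. For the cokernel, for each mode $|n| \leq k$ (including $n=0$), construct the unique bounded (for $n \neq 0$) or decaying-at-infinity (for $n=0$) particular solution $u_n^p$ via the one-dimensional Green's function $-\frac{1}{2n} e^{-n|s-t|}$ (or its $n=0$ analogue $-(t-s)_+$). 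In order to remain in $\mathcal{C}^{2,\alpha}_\mu$ after imposing $u_n(S) = 0$ we would have to add a multiple of the non-decaying-enough mode $e^{-|n|s}$ (or $1$), which is not allowed. Thus there is exactly one linear moment condition on $f_n$ per such mode, giving $2k+1$ independent obstructions. These assemble into continuous linear functionals on $\mathcal{C}^{0,\alpha}_\mu$ whose joint kernel is the image, establishing Fredholmness and the asserted cokernel dimension.

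\textbf{Case $\mu > 0$, not an integer.} Now $k < \mu < k+1$. For surjectivity, on each mode the inhomogeneous equation admits a solution in $\mathcal{C}^{2,\alpha}_\mu$: when $|n| > \mu$ use the bounded Green's-function solution (which automatically decays), and when $|n| \leq \mu$ use a variation-of-parameters formula with $e^{\pm ns}$ that grows at most like $e^{|n|s}$, which is controlled by $e^{\mu s}$. In every case, one then adjusts by a multiple of the decaying homogeneous solution $e^{-|n|s}$ (or, for $n=0$, a constant or linear function, both of which are admissible because $\mu > 0$) to enforce $u_n(S) = 0$; this can always be arranged, giving surjectivity. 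The kernel consists of Dirichlet homogeneous solutions in the space: for $|n| \leq k$, both $e^{\pm ns}$ (resp.\ $\{1,s\}$ for $n=0$) lie in $\mathcal{C}^{2,\alpha}_\mu$ and the single boundary condition cuts this two-dimensional space down to one dimension; for $|n| > k$, the space admits only $e^{-|n|s}$ and the boundary condition forces it to vanish. Summing, the kernel has dimension $1 + 2k = 2k+1$.

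The main technical obstacle is not the mode-by-mode computation itself but the global step: showing that the formal solution obtained by summing the modal solutions actually lies in $\mathcal{C}^{2,\alpha}_\mu$ with norm controlled by that of $f$, and that the moment-condition functionals extend continuously to $\mathcal{C}^{0,\alpha}_\mu$. This is handled by establishing uniform weighted Schauder estimates for each modal Green's operator, with constants depending on $\mathrm{dist}(\mu, \mathbb{Z})$, together with interior elliptic regularity to reconstitute H\"older norms from $L^2$ mode expansions; this is the point where the hypothesis $\mu \notin \mathbb{Z}$ enters quantitatively to prevent blowup of the modal constants.
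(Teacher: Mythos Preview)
Your approach via Fourier decomposition into modes $u_n'' - n^2 u_n = f_n$ is exactly the paper's, and your injectivity argument and kernel count in case~(b) match what the paper does (the paper in fact only writes out case~(a), deferring to the reference for~(b)).

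There is, however, a slip in your cokernel argument for case~(a) on the low modes $0 < |n| \le k$. The convolution with the whole-line Green's function $-\tfrac{1}{2|n|}e^{-|n||s-t|}$ produces a particular solution that is merely bounded: its leading behaviour is $-\tfrac{1}{2|n|}\bigl(\int_S^\infty e^{|n|t}f_n(t)\,dt\bigr)e^{-|n|s}$, which for $|n| \le k < |\mu|$ is \emph{not} $O(e^{\mu s})$ and hence not in $\mathcal{C}^{2,\alpha}_\mu$. So the phrase ``in order to \emph{remain} in $\mathcal{C}^{2,\alpha}_\mu$'' is where the argument breaks; you were never in the space. The correct statement is that there is a \emph{unique} solution in $\mathcal{C}^{2,\alpha}_\mu$ to $u_n'' - n^2 u_n = f_n$ on the low modes (because no nonzero homogeneous solution lies in the space), obtained by integrating twice from $+\infty$ rather than by the whole-line Green's function; the paper carries this out explicitly in its auxiliary Proposition (the formula $u_n(s) = e^{-|n|s}\int_s^\infty e^{2|n|t}\int_t^\infty e^{-|n|\tau}f_n(\tau)\,d\tau\,dt$). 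The obstruction is then simply the value $u_n(S)$ of this unique weighted solution, which is one linear functional per low mode, giving the $2k+1$ you want. Your $n=0$ treatment is already correct in this sense, since your formula $\int_s^\infty (t-s)f_0(t)\,dt$ \emph{is} the unique decaying solution.
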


As we mentioned, we'll only discuss the most relevant case $(a)$. The proof of injectivity is elementary, but the characterisation of the cokernel requires Proposition \ref{LoRightInv} below. 

\begin{proof}[Proof of injectivity]
Assume $\mu < 0$, $\mu \notin \mathbb{Z}$ and $u$ is a solution to the homogeneous equation $L_0u= 0$. Expand $u$ in a Fourier series as
\[ u(s,\theta) = \sum_n u_n(s)\psi_n(\theta). \] Since
\[ L_0u = \sum_{n \in \mathbb{Z}}\left(u_n''(s) - n^2u_n(s)\right)\psi_n(\theta), \]the equation $L_0u = 0$ is equivalent to the family of ordinary differential equations
\[  u_n'' - n^2 u_n = 0, \hspace{5pt}  n \in \mathbb{Z} . \]
The solutions are of course
\[ u_n(s) = \left\{ \begin{array}{cc} a_0 + b_0s & \mbox{ if } n = 0 \\ 
 a_ne^{-|n|s} + b_ne^{|n|s} & \mbox{ if } n \neq 0. \end{array} \right. \]
Admission to the weighted space requires that each solution $u_n$ decay at infinity, because $\mu$ is negative. Hence $b_n = 0$ for all $n \in \mathbb{Z}$, $a_0 = 0$, and
\[ u(s,\theta) = \sum_{n \neq 0}a_ne^{-|n|s}\psi_n(\theta). \]
Applying the boundary condition $u(S,\theta) = 0$ forces the coefficients $a_n$ to vanish, and with them $u$. \qedhere
\end{proof}

\begin{Prop} \label{LoRightInv}
Let $\mu < 0$ and not an integer, and let $k$ be the largest integer such that $0 \leq k < |\mu|$. Then for each $f \in \mathcal{C}^{0,\alpha}_{\mu}([S, \infty) \times \mathbb{S}^1)$ there exists a unique solution $u \in \mathcal{C}^{2,\alpha}_{\mu}([S, \infty) \times \mathbb{S}^1)$ to the boundary value problem
\begin{equation} \left\{ \begin{array}{cc} L_0 u = f & \mbox{ in } [S,\infty) \times \mathbb{S}^1 \\
u(S,\theta) \in span\{\psi_n: |n| \leq k \} & \mbox{ on } \{S\} \times \mathbb{S}^1. \end{array} \right. \label{LoPartialDirichlet} \end{equation} Hence there is a well-defined solution operator $G$ given by 
\[ \mathcal{C}^{0,\alpha}_{\mu}([S, \infty) \times \mathbb{S}^1) \ni f \longmapsto u = G(f) \in \mathcal{C}^{2,\alpha}_{\mu}([S, \infty) \times \mathbb{S}^1). \]
In fact $G$ is bounded, that is, there exists a constant $c > 0$ such that 
\begin{equation} ||u||_{2,\alpha, \mu} \leq c||f||_{0,\alpha, \mu} \hspace{5pt} \mbox{ for all } f \in \mathcal{C}^{0,\alpha}_{\mu}([S, \infty) \times \mathbb{S}^1). \label{Gbddest} \end{equation}
 \end{Prop}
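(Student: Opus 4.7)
The plan is to proceed by Fourier decomposition in the angular variable, reducing the problem to a family of ordinary differential equations on $[S,\infty)$ indexed by $n \in \mathbb{Z}$, and to assemble the solution by distinguishing ``high'' modes ($|n| > k$) from ``low'' modes ($|n| \leq k$). First I would expand both $f$ and the unknown $u$ in the basis $\{\psi_n\}$, writing $f(s,\theta) = \sum_n f_n(s)\psi_n(\theta)$ and $u(s,\theta) = \sum_n u_n(s)\psi_n(\theta)$, so that $L_0 u = f$ becomes the family of scalar ODEs $u_n'' - n^2 u_n = f_n$. For a high mode $|n| > k$, the boundary condition in \eqref{LoPartialDirichlet} forces $u_n(S) = 0$; by the choice of $k$ we have $|n| > |\mu|$, so the decaying fundamental solution $e^{-|n|s}$ falls off strictly faster than the weight $(\cosh s)^\mu$, and the Green's function for the two-point BVP (zero Dirichlet at $S$, decay at infinity) produces the unique $u_n$. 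For a low mode $|n| \leq k$ no boundary condition is imposed; here $|n| < |\mu|$, so $e^{-|n|s}$ decays strictly \emph{more slowly} than $(\cosh s)^\mu$, which forces any admissible $u_n$ to be the unique member of the ODE's solution family achieving decay of rate $e^{\mu s}$. Explicitly I would take $u_n(s) = \tfrac{1}{|n|}\int_s^\infty \sinh(|n|(\sigma-s))f_n(\sigma)\,d\sigma$ for $n \neq 0$ in the low range, and $u_0(s) = \int_s^\infty (\sigma - s)f_0(\sigma)\,d\sigma$ for $n = 0$.

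The next step is to verify the weighted estimate \eqref{Gbddest}. Using the pointwise bound $|f_n(\sigma)| \lesssim ||f||_{0,\alpha,\mu}\,e^{\mu\sigma}$ on each Fourier coefficient, direct integration mode-by-mode gives $|u_n(s)| \leq c\,||f||_{0,\alpha,\mu}\,e^{\mu s}$ with $c$ independent of $n$; the hypothesis $\mu \notin \mathbb{Z}$ is precisely what prevents any mode from producing a borderline resonance (that is, the denominators $|n| \pm \mu$ stay bounded away from zero uniformly in $n$). Reassembling the Fourier series yields a pointwise bound $|u(s,\theta)| \leq c\,||f||_{0,\alpha,\mu}(\cosh s)^\mu$. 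This pointwise bound is then promoted to the full weighted $\mathcal{C}^{2,\alpha}_\mu$ estimate by scale-invariant interior Schauder estimates on unit-length slabs $[s,s+1]\times \mathbb{S}^1$, supplemented by a standard boundary Schauder estimate near $s = S$ to handle the low-mode boundary data.

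Uniqueness follows from the same mode-by-mode analysis. If $u_1, u_2$ are two solutions of \eqref{LoPartialDirichlet}, the difference $v = u_1 - u_2$ lies in $\mathcal{C}^{2,\alpha}_\mu$, satisfies $L_0 v = 0$, and has $v(S,\cdot) \in \mathrm{span}\{\psi_n : |n| \leq k\}$. Expanding $v = \sum v_n \psi_n$: for $|n| > k$ the injectivity argument recalled just before Proposition \ref{MappingYoga} applies verbatim and gives $v_n \equiv 0$; for $|n| \leq k$ the only decaying fundamental solutions are $a_n e^{-|n|s}$ (or $a_0$), and since $-|n| > \mu$ the bound $|v_n(s)| \leq c(\cosh s)^\mu$ forces $a_n = 0$ after letting $s \to \infty$. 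Thus $v \equiv 0$.

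The main obstacle I expect is simply the bookkeeping that splits modes at the threshold $|n| = |\mu|$ and delivers constants in the pointwise bounds that are uniform across all $n$; the role of $\mu \notin \mathbb{Z}$ appears exactly here. Once that has been done carefully, the Schauder regularity step and the uniqueness step are routine.
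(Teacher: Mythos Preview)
Your approach is essentially the same as the paper's: Fourier decompose in $\theta$, solve the resulting ODEs $u_n'' - n^2 u_n = f_n$ by variation of parameters, split into high modes (Dirichlet at $S$) and low modes (no boundary condition), estimate, sum, and finish with Schauder. Your $\sinh$-kernel formula for the low modes is exactly the paper's double-integral formula after one application of Fubini, and your uniqueness argument matches the paper's.

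One point needs tightening. You write that integration gives $|u_n(s)| \leq c\,\|f\|_{0,\alpha,\mu}\,e^{\mu s}$ with $c$ \emph{independent of $n$}, and then pass to a pointwise bound on $u$ by ``reassembling the Fourier series''. A uniform constant is not enough to sum over infinitely many modes; what the integration actually yields (and what the paper records) is
\[
|u_n(s)| \;\leq\; \frac{\|f\|_{0,\alpha,\mu}}{|n^2 - \mu^2|}\,e^{\mu s},
\]
and it is the $1/n^2$ decay of these constants that makes $\sum_n |u_n(s)|$ converge. The hypothesis $\mu \notin \mathbb{Z}$ ensures these denominators never vanish (your ``no borderline resonance''), but you also need their growth in $n$ for the reassembly step. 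Once you state the $n$-dependent constant explicitly, the rest of your argument goes through as written.
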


\begin{proof}
We begin with the proof of existence and the estimate \eqref{Gbddest}. Expand $u$ and $f$ in Fourier series as
\[ u(s,\theta) = \sum_{n \in \mathbb{Z}}u_n(s)\psi_n(\theta) \hspace{5pt} \mbox{ and } \hspace{5pt} f(s,\theta) = \sum_{n \in \mathbb{Z}}f_n(s)\psi_n(\theta). \]
The equation we wish to solve is equivalent to the family of ordinary differential equations
\[ u_n''(s) - n^2u_n(s) = f_n(s), \hspace{5pt} n \in \mathbb{Z}. \]
These can be solved using ''variation of coefficients'', since the solutions to the homogeneous equations are explicitly known. As we wish to obtain decaying functions $u_n$, we discard the unbounded solutions to the homogeneous equation, taking the ansatz
\[ u_n(s) = a_n(s)e^{-|n|s}, \hspace{5pt} n \in \mathbb{Z}. \]
It follows from the equation for $u_n$ that $a_n$ must satisfy
\[ a_n''(s)e^{-|n|s} - 2|n|a_n'(s)e^{-|n|s} = f_n(s). \]
This is first order and linear in $a_n'$, so an integrating factor of $e^{-|n|s}$ gives us
\begin{equation} \left( a_n'(s)e^{-2|n|s}\right)' = e^{-|n|s}f_n(s). \label{anone} \end{equation}
To integrate this, notice that the right hand side is $\mathcal{O}(e^{(-|n| + \mu)s})$ as $s \rightarrow \infty$, because $f \in \mathcal{C}^{0,\alpha}_{\mu}([S, \infty) \times \mathbb{S}^1)$ implies that
\[ |f_n(s)| \leq ||f||_{0,\alpha,\mu}e^{\mu s} \mbox{ for all } s \geq S. \]
Therefore we may integrate in from infinity, obtaining 
\[ a_n'(t)e^{-2|n|t} = - \int^{\infty}_te^{-|n|\tau}f_n(\tau) d\tau = \mathcal{O}\left( \frac{||f||_{0,\alpha,\mu}}{|n| - \mu}e^{(-|n| + \mu)t}\right) \mbox{ as } t \rightarrow \infty. \] If instead we had integrated \eqref{anone} from $S$ to $t$, such an estimate need not hold. 
In summary then, 
\begin{equation} a_n'(t) = - e^{2|n|t}\int^{\infty}_te^{-|n|\tau}f_n(\tau) d\tau = \mathcal{O}\left( \frac{||f||_{0,\alpha,\mu}}{|n| - \mu}e^{(|n| + \mu)t}\right) \mbox{ as } t \rightarrow \infty.\label{antwo} \end{equation}

Now there arises a dichotomy. If $|n| < |\mu|$, the right hand side of \eqref{antwo} is exponentially decaying, whilst if $|n| > |\mu|$ it may grow exponentially. In the first case we integrate \eqref{antwo} from $\infty$ to $s$, as before, whilst in the second we must integrate from $S$ to $s$. Therefore our solutions are
\begin{equation} u_n(s) = e^{-|n|s}\int^{\infty}_s e^{2|n|t}\int^{\infty}_te^{-|n|\tau}f_n(\tau) d\tau dt \label{solnlowmodes} \end{equation} if $|n|< |\mu|$ and
\begin{equation} u_n(s) = -e^{-|n|s}\int^s_S e^{2|n|t}\int^{\infty}_te^{-|n|\tau}f_n(\tau) d\tau dt \label{solnhighmodes} \end{equation} 
 if $|n| > |\mu|$. In all cases we have the estimate
\[ |u_n(s)| =  \mathcal{O}\left( \frac{||f||_{0,\alpha,\mu}}{|n^2 - \mu^2|}e^{ \mu s}\right) \mbox{ as } s \rightarrow \infty. \]  
These bounds on $u_n$ are summable in $n$, therefore there exists a constant $c > 0$ independent of $f$ such that
\[ |u(s,\theta)| \leq c||f||_{0,\alpha,\mu}e^{\mu s} \hspace{5pt} \mbox{ for all } (s,\theta) \in [S,\infty)\times \mathbb{S}^1. \]
Hence the full estimate \eqref{Gbddest} follows from Schauder's estimates. 

The proof of uniqueness follows the argument for injectivity in Proposition \ref{MappingYoga} $(a)$. Assume we have a function $u$ satisfying \eqref{LoPartialDirichlet} with $f = 0$, and belonging to $\mathcal{C}^{2,\alpha}_{\mu}([S, \infty) \times \mathbb{S}^1)$. As above we find
\[ u(s,\theta) = \sum_{n \neq 0}(a_ne^{-|n|s} + b_ne^{|n|s})\psi_n(\theta) + a_0 + b_0s. \]
By assumption $u$ is not only decaying, but $\mathcal{O}(e^{\mu s})$, as $s \rightarrow \infty$. This forces $b_n = 0$ for all $n \in \mathbb{Z}$, and $a_n = 0$ for all $0 \leq |n| \leq k$. So
\[ u(s,\theta) = \sum_{|n| > k}a_ne^{-|n|s}\psi_n(\theta), \]
and the boundary condition in \eqref{LoPartialDirichlet} implies that $a_n = 0$ for all $|n| > k$, hence $u = 0$. \qedhere
\end{proof}


\begin{proof}[Proof of Proposition \ref{MappingYoga} (a)]
Given $f \in \mathcal{C}^{0,\alpha}_{\mu}([S, \infty) \times \mathbb{S}^1)$, let $u$ be the unique solution to the boundary value problem \eqref{LoPartialDirichlet} furnished by Proposition \ref{LoRightInv}. Since $\mu$ is negative, the mapping \eqref{LoWithSpaces} is injective, so $u$ is the only candidate for a pre-image of $f$. In fact $u$ provides such a pre-image if and only if it vanishes identically on the boundary, which occurs precisely when $u_n(S) = 0$ for all $|n| < |\mu|$. Intuitively this identifies the cokernel of \eqref{LoWithSpaces} with the space of $2k + 1$-tuples $(u_n(S))_{|n| \leq k}$. Formally, define a mapping
\[ \Lambda: \mathcal{C}^{0,\alpha}_{\mu}([S, \infty) \times \mathbb{S}^1) \ni f \longrightarrow (u_n(S))_{|n| \leq k} \in \mathbb{R}^{2k + 1}. \]
Proposition \ref{LoRightInv} implies that this is well-defined and bounded. Therefore we have a short exact sequence of bounded linear maps
\[ 0 \stackrel{0}{\longrightarrow} \mathcal{C}^{2,\alpha}_{\mu, D}([S, \infty) \times \mathbb{S}^1) \stackrel{L_0}{\longrightarrow} \mathcal{C}^{0,\alpha}_{\mu}([S, \infty) \times \mathbb{S}^1) \stackrel{\Lambda}{\longrightarrow} \mathbb{R}^{2k + 1} \stackrel{0}{\longrightarrow} 0, \]
and the cokernel of \eqref{LoWithSpaces} is isomorphic to $\mathbb{R}^{2k + 1}$. \qedhere
\end{proof}





In our solution of the mean curvature equation by a contraction mapping we must invert the Jacobi operator. As with $L_0$ this is only possible when the weight is not an indicial root, and then only with a solution operator analogous to that of Proposition \ref{LoRightInv}. Before we can prove such a result we must understand the indicial roots of $\mathcal{L}_{\e}$. 


\subsection{Perturbation of spectra and indicial roots}\label{PertSpecRoot}

It turns out that for the linear analysis we can work with an operator that is simpler than even $\mathcal{L}_{\e}$. As we will see, the exponentially decaying term in the potential can be absorbed in 'error' terms, so we will study
\[ L_{\e} := \partial_s^2 + \partial_{\theta}^2 + \alpha^{-2}E. \]
As for the flat Laplacian $L_0$ discussed in the previous section, the indicial roots of $L_{\e}$ are determined by the eigenvalues of the cross-sectional operator. In this case that operator is $\partial_{\theta}^2 + \alpha^{-2}E$ rather than $\partial_{\theta}^2$.


\subsubsection*{Spectrum of $\partial_{\theta}^2 + \alpha^{-2}E$}

Since $\csl$ is a perturbation of $\partial_{\theta}^2$, whose spectrum we know, it is natural to try to apply the perturbation theory of linear operators to understand its spectrum. The results of this theory are most straightforward in the setting of a family of self-adjoint operators on a fixed Hilbert space \cite{K}. Unfortunately $\csl$ is \emph{not} self-adjoint on $L^2(\mathbb{S}^1, d\theta)$. However the Jacobi operator is self-adjoint with respect to the area measure on the catenoids, so $\csl$ ought to be self-adjoint with respect to a suitable measure obtained from the catenoids' metrics. This is indeed the case, the measure being $(1 + \alpha^{-2}\cos^2\theta)^{-1/2}d\theta$. Therefore we can arrange to work with a family $T_{\e}$ of self-adjoint operators on $L^2(\mathbb{S}^1, d\theta)$ by conjugating $\csl$ by the obvious isometry of this Hilbert space and $L^2(\mathbb{S}^1, (1 + \alpha^{-2}\cos^2\theta)^{-1/2}d\theta)$. Of course, $T_{\e}$ and $\csl$ have exactly the same eigenvalues. 

\begin{Def}
Let $M_{\e}$ be the isometry
\[ L^2(\mathbb{S}^1, d\theta) \ni \phi \longmapsto (1 + \alpha^{-2}\cos^2\theta)^{1/4}\phi \in  L^2(\mathbb{S}^1, (1 + \alpha^{-2}\cos^2\theta)^{-1/2}d\theta). \]
We'll consider the family of self-adjoint operators on $L^2(\mathbb{S}^1, d\theta)$ defined by
\[ T_{\e} = M_{\e}^{-1} \circ (\csl) \circ M_{\e}. \]
Note that $T_0 = \partial_{\theta}^2$. 
\end{Def}


There is one final consideration to be made before discussing the perturbation theory. Recall that the horizontal catenoids are invariant under a reflection $\tilde{\rho}$ in the plane $\mathbb{H}^2 \times \{0\}$. Corresponding to this ambient symmetry is a coordinate symmetry $\theta \mapsto \pi - \theta$. That is, the embeddings $\mathbf{X}_{\alpha}$ of the catenoids satisfy
\[ \tilde{\rho} \left(\mathbf{X}_{\alpha}(s,\theta) \right) = \mathbf{X}_{\alpha}(s,\pi - \theta). \]
Therefore functions on the catenoids are invariant under vertical reflection if, for each fixed $s$, they belong to the following space. 
\begin{Def}
Let 
\[L^2_e(\mathbb{S}) := \{\phi \in L^2(\mathbb{S}): \phi \circ \rho = \phi\}, \hspace{5pt} \mbox{ where }  \rho(\theta) = \pi - \theta. \]
\end{Def}  
There's a good geometric reason for considering such functions, which we explain below when we discuss geometric Jacobi fields. On the other hand much of our analysis does not depend on this symmetry. Nevertheless we will go ahead and impose it on our function spaces, to avoid a proliferation of notations. 

The space $L^2_e(\mathbb{S})$ has an orthonormal basis of eigenfunctions for $\partial_{\theta}^2$, consisting of $\{\psi_n\}_{n = 0}^{\infty}$, where 
\[ \psi_n(\theta) := \left\{ \begin{array}{cc} \frac{1}{\sqrt{2\pi}} & \mbox{ $n = 0$ } \\ \vspace{5pt} \frac{\cos(n\theta)}{\sqrt{\pi}} & \mbox{ $n$  even, non-zero } \\ \vspace{5pt} \frac{\sin(n\theta)}{\sqrt{\pi}} & \mbox{ $n$ odd, non-zero. } \end{array} \right. \]
From this description it follows that the operators $M_{\e}$, $\csl$ and $T_{\e}$ reduce to this subspace, so we can and will consider $T_{\e}$ as a family of self-adjoint operators on $L^2_e(\mathbb{S}, d\theta)$. 

On $L^2_e(\mathbb{S})$ the eigenvalues of $\partial_{\theta}^2$ are all simple; we'll denote them by $\lambda_n = -n^2, n \geq 0$. Therefore the eigenvalues of $\csl$ on $L^2_e(\mathbb{S})$ are also simple, and we denote them by $\lambda_{n,\e}, n \geq 0$. 

\begin{Lem}\label{lambdatwo} For every $n \geq 2$,
\[ \lambda_{n,\e} = -n^2 - \frac{n^2}{2}\alpha^{-2} + \mathcal{O}(\alpha^{-4}). \]
\end{Lem}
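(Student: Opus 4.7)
My plan is to regard $T_\tau$, with $\tau := \alpha^{-2}$, as a real-analytic one-parameter family of self-adjoint operators on $L^2_e(\mathbb{S}^1, d\theta)$ with $T_0 = \partial_\theta^2$, and then invoke Rellich--Kato perturbation theory. Since each eigenvalue $\lambda_n = -n^2$ of $\partial_\theta^2$ is simple on $L^2_e(\mathbb{S}^1)$, the corresponding eigenvalue $\lambda_{n,\e}$ depends real-analytically on $\tau$ near $\tau = 0$, with Taylor expansion
\[ \lambda_{n,\e} = -n^2 + \tau\,\mu_n^{(1)} + O(\tau^2), \qquad \mu_n^{(1)} = \langle \dot T_0\, \psi_n, \psi_n\rangle_{L^2(d\theta)}, \]
where $\dot T_0 := \tfrac{d}{d\tau}T_\tau\big|_{\tau = 0}$. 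The $O(\tau^2)$ remainder is exactly the claimed $O(\alpha^{-4})$ error, so the task reduces to identifying $\mu_n^{(1)}$ with $-n^2/2$ for $n \geq 2$.

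To compute $\dot T_0$, I would use $M_\tau(\theta) = (1 + \tau\cos^2\theta)^{1/4}$, which gives $\dot M_0 = \tfrac{1}{4}\cos^2\theta$ and $\dot{M_0^{-1}} = -\tfrac{1}{4}\cos^2\theta$, whence
\[ \dot T_0 \;=\; E \;+\; \bigl[\,\partial_\theta^2,\,\tfrac{1}{4}\cos^2\theta\,\bigr]. \]
The commutator piece contributes nothing to $\mu_n^{(1)}$: since both $\partial_\theta^2$ and multiplication by $\cos^2\theta$ are self-adjoint on $L^2(\mathbb{S}^1, d\theta)$ and $\psi_n$ is real-valued, one has $\langle [\partial_\theta^2, \cos^2\theta]\psi_n, \psi_n\rangle = 0$. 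Hence $\mu_n^{(1)} = \langle E\psi_n, \psi_n\rangle$.

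The final step is to evaluate $\langle E\psi_n, \psi_n\rangle$ by writing $E = \cos^2\theta\,\partial_\theta^2 - \sin\theta\cos\theta\,\partial_\theta + \cos(2\theta)$ and substituting $\psi_n^{\prime\prime} = -n^2\psi_n$, with $\psi_n$ equal to $\cos(n\theta)/\sqrt{\pi}$ or $\sin(n\theta)/\sqrt{\pi}$ according to the parity of $n$. After a short trigonometric manipulation the cross term and the zeroth-order potential term reduce to
\[ \int_0^{2\pi}\sin(2\theta)\sin(2n\theta)\,d\theta \quad \text{and} \quad \int_0^{2\pi}\cos(2\theta)\cos(2n\theta)\,d\theta, \]
both of which vanish for $n \neq 1$, while $\int_0^{2\pi}\cos^2\theta\,\psi_n^2\,d\theta$ collapses to $\tfrac{1}{2}$. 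This yields $\mu_n^{(1)} = -n^2/2$, completing the proof. The hypothesis $n \geq 2$ is exactly what I expect to be the only delicate point: the two orthogonality relations above fail at $n = 1$, and the $n = 0$ case degenerates since $\psi_0'' = 0$, so the low modes $n = 0, 1$ must be handled separately and are excluded from the uniform formula.
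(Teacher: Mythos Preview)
Your proof is correct and follows essentially the same route as the paper: both invoke Kato's analytic perturbation theory for the self-adjoint family $T_\tau$, identify the first-order coefficient with $\langle E\psi_n,\psi_n\rangle$, and evaluate this by observing that for $n\ge 2$ the terms in $E\psi_n$ carrying a factor $\cos(2\theta)$ or $\sin(2\theta)$ are orthogonal to $\psi_n$, leaving only the contribution $-\tfrac{n^2}{2}$ from $\tfrac12\partial_\theta^2\psi_n$. The only cosmetic difference is that the paper substitutes the expansion directly into the eigenvalue equation for $\partial_\theta^2+\alpha^{-2}E$ (thereby bypassing the explicit computation of $\dot T_0$), whereas you compute $\dot T_0=E+[\partial_\theta^2,\tfrac14\cos^2\theta]$ and then note the commutator has vanishing diagonal; both routes land on the same integral.
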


\begin{proof}
We invoke the perturbation theory of analytic families of self-adjoint operators (see \cite{K}, Chapter $7$, section $3$). It tells us that the eigenvalues and eigenvectors of $T_{\e}$ may be chosen to have (convergent) series expansions in powers of $\alpha^{-2}$ in such a way that $\psi_n$ is an eigenfunction for the unperturbed operator $\partial_{\theta}^2$ corresponding to the eigenvalue $\lambda_n$:
\begin{eqnarray*}
\psi_{n,\e}(\theta) & = & \psi_n(\theta) + \alpha^{-2}\psi_{n}^{(1)}(\theta) + \alpha^{-4}\psi_{n}^{(2)}(\theta) + \cdots \\
\lambda_{n,\e} & = & \lambda_n + \alpha^{-2}\lambda_n^{(1)} + \alpha^{-4}\lambda_n^{(2)} + \cdots.
\end{eqnarray*}

 The eigenfunctions for $\csl$ are then $M_{\e}\psi_{n,\e} = (1 + \alpha^{-2}\cos^2\theta)^{1/4}\psi_{n,\e}$; clearly these may also be expanded in a series in powers of $\alpha^{-2}$. In particular we have  
\[ M_{\e}\psi_{n,\e} = \psi_n(\theta) + \alpha^{-2}\psi(\theta) + \mathcal{O}(\alpha^{-4}) \hspace{5pt} \mbox{ and } \hspace{5pt} 
\lambda_{n,\e} = -n^2 + \alpha^{-2}\lambda + \mathcal{O}(\alpha^{-4}), \] and we wish to find $\lambda$. 
%
%
%
%
Substituting these into the eigenvalue equation $(\csl)M_{\e}\psi_{n,\e} = \lambda_{n,\e}M_{\e}\psi_{n,\e}$ and equating coefficients of $\alpha^{-2}$, we get
\[ (\partial_{\theta}^2 + n^2)\psi = (-E + \lambda)\psi_n. \]
Now, the left hand side is orthogonal to $\psi_n$ in $L^2(\mathbb{S}, d\theta)$, hence the coefficient of $\psi_n$ on the right hand side must vanish. We claim that the only contribution of $E\psi_n$ to the $\psi_n$ Fourier mode is $-\frac{n^2}{2} \psi_n$, from which the lemma then follows. 

To prove the claim, write
\[ E\psi_n = \frac{1}{2}\partial_{\theta}^2\psi_n + \frac{\cos(2\theta)}{2}\partial_{\theta}^2\psi_n - \frac{\sin(2\theta)}{2}\partial_{\theta}\psi_n + \cos(2\theta)\psi_n \]
The first term on the right hand side is the $-\frac{n^2}{2} \psi_n$ term. All other terms involve a factor of $\cos(2\theta)$ or $\sin(2\theta)$ multiplying $\psi_n$ or its first or second derivative. By standard identities for products of the form $\cos(m\theta)\cos(n\theta), \sin(m\theta)\sin(n\theta)$ and $\sin(m\theta)\cos(n\theta)$, those terms only involve $\psi_{n-2}$ and $\psi_{n + 2}$. Since $n \geq 2$ the functions $\psi_{n+2}$ and $\psi_{n-2}$ are distinct from $\psi_n$, and the claim is proved. \qedhere
 \qedhere
\end{proof} 

For the low eigenvalues we have more precise information. 

\begin{Lem}\label{LowEval}
The first three eigenvalues of $\csl$ on $L^2(\mathbb{S})$ (not $L^2_e(\mathbb{S})$) are $0, -1 $ and $-(1 + \e)^2$.  Their eigenspaces are spanned by $(1 + \alpha^{-2}\cos^2 \theta)^{1/2}, \cos \theta$ and $\sin \theta$ respectively. Hence the first two eigenvalues of $\csl$ on $L^2_e(\mathbb{S})$ are $\lambda_{0,\e} = 0$ and $\lambda_{1,\e} = -(1 + \e)^2$.  
\end{Lem}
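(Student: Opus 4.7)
The plan is in three stages: simplify the operator $E$, verify the three eigenpairs directly, and show these are the top three in the spectrum.

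\emph{Simplification of $E$.} Using $\tfrac{\cos 2\theta + 1}{2} = \cos^2\theta$ and $\tfrac{\sin 2\theta}{2} = \sin\theta\cos\theta$, the second order part of $E$ factors as
\[ \cos^2\theta\,\partial_\theta^2 - \sin\theta\cos\theta\,\partial_\theta = (\cos\theta\,\partial_\theta)^2. \]
Setting $D := \cos\theta\,\partial_\theta$, this reduces the cross-section operator to $\csl = \partial_\theta^2 + \alpha^{-2}(D^2 + \cos 2\theta)$, a form in which the candidate eigenfunctions are easy to act on.

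\emph{Verification of eigenpairs.} Using $D\cos\theta = -\sin\theta\cos\theta$ and $D^2\cos\theta = -\cos\theta\cos 2\theta$ one finds $E\cos\theta = 0$, hence $(\csl)\cos\theta = -\cos\theta$. Similarly $D\sin\theta = \cos^2\theta$ and $D^2\sin\theta = -\sin\theta(1+\cos 2\theta)$ give $E\sin\theta = -\sin\theta$, so $(\csl)\sin\theta = -(1+\alpha^{-2})\sin\theta$; by the definitions of $\alpha_*$ and $\e$, $(1+\e)^2 = \alpha_*^2/\alpha^2 = 1+\alpha^{-2}$, as claimed. For $\phi_0 = (1+\alpha^{-2}\cos^2\theta)^{1/2}$, differentiating $\phi_0^2 = 1+\alpha^{-2}\cos^2\theta$ twice yields $2\phi_0\phi_0' = -\alpha^{-2}\sin 2\theta$ and $\phi_0\phi_0'' = -\alpha^{-2}\cos 2\theta - (\phi_0')^2$. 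Since $1+\alpha^{-2}\cos^2\theta = \phi_0^2$, one expands
\[ (\csl)\phi_0 = \phi_0''\phi_0^2 - \alpha^{-2}\sin\theta\cos\theta\,\phi_0' + \alpha^{-2}\cos 2\theta\,\phi_0, \]
and substituting the expressions above gives exact cancellation of the $(\phi_0')^2$ contribution against the $-\alpha^{-2}\sin\theta\cos\theta\,\phi_0'$ term, and of the two $\cos 2\theta$ terms, yielding $\csl\phi_0 = 0$.

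\emph{Identification of the top three.} The unitary conjugation $T_\e = M_\e^{-1}\circ(\csl)\circ M_\e$ sends $\phi_0$ to $(1+\alpha^{-2}\cos^2\theta)^{1/4}$, which is strictly positive on $\mathbb{S}^1$. Since $T_\e$ is a self-adjoint second order elliptic operator on the compact manifold $\mathbb{S}^1$, the Krein--Rutman/Perron--Frobenius principle guarantees that $0$ is the largest eigenvalue and that it is simple. For the remaining two, I appeal to the analytic perturbation theory already invoked for Lemma \ref{lambdatwo}: the unperturbed eigenvalues of $T_0 = \partial_\theta^2$ on $L^2(\mathbb{S}^1)$ are $0, -1, -1, -4, -4, \dots$; on $L^2_e$, Lemma \ref{lambdatwo} shows that the perturbed eigenvalues besides $0$ and $-(1+\e)^2$ are at most $-4 + C\alpha^{-2}$, and an identical first-order computation on the orthogonal complement of $L^2_e$ (spanned by $\cos n\theta$ for odd $n$ and $\sin n\theta$ for even $n \geq 2$) shows that its eigenvalues apart from the exact value $-1$ bifurcate from $-n^2$ with $n \geq 2$ and are also at most $-4 + C\alpha^{-2}$. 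For sufficiently small $\e$ these all lie strictly below $-(1+\e)^2$, so $0, -1, -(1+\e)^2$ are precisely the top three eigenvalues and each is simple.

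The main obstacle is the $\phi_0$ verification: it is entirely mechanical, but relies on the factorisation $E = D^2 + \cos 2\theta$ to expose the two cancellations, and without that simplification the algebra (with $u^{-1/2}$ and $u^{-3/2}$ factors) obscures the structure. A secondary issue is that Lemma \ref{lambdatwo} is stated only on $L^2_e$, so one must note that its proof carries over verbatim to the odd-symmetry basis functions, giving the spectral gap needed to conclude that no fourth eigenvalue encroaches on the interval $[-(1+\e)^2,0]$.
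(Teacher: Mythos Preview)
Your proof is correct and takes the same direct-verification approach as the paper, though you go further by actually arguing (via positivity of the ground state and the perturbation estimate of Lemma~\ref{lambdatwo}, extended to the odd subspace) that these three eigenvalues sit at the top of the spectrum, a point the paper's one-line proof leaves implicit. Note also that your computation $E\sin\theta=-\sin\theta$ is the correct one and the paper's $E\sin\theta=-2\sin\theta$ is a typo, since the claimed eigenvalue $-(1+\e)^2=-(1+\alpha^{-2})$ for $\sin\theta$ requires the former.
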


\begin{proof}
The proof is by direct calculation; note that $E \cos \theta = 0$, $E \sin \theta = -2\sin \theta$ and $1 + \alpha^{-2} = (1 + \e)^2$ by \eqref{epsilon}. \qedhere
\end{proof}

In other words, on the full space $L^2(\mathbb{S})$, the multiplicity two eigenvalue $-1$ for $\partial_{\theta}^2$ splits into two simple eigenvalues $-1$ and $-(1 + \e)^2$, but on the restricted space $L^2_e(\mathbb{S})$ the eigenvalue $-1$ is simple and is perturbed to $-(1 + \e)^2$. 




\subsubsection*{Indicial roots}

As in the discussion of the indicial roots of $L_0$, the indicial roots of $L_{\e}$ are essentially the square-root 
of the eigenvalues of the cross-sectional operator $\csl$. Just as the eigenvalues of $\csl$ depend on whether or not we impose the symmetry discussed above, so do the indicial roots. Therefore we make the following \emph{convention}: 
\begin{quote}
Unless otherwise stated, when we refer to 'the indicial roots of $L_{\e}$' it refers to the operator acting on spaces of functions that are invariant under the vertical reflection symmetry discussed above. 
\end{quote}

By the same argument as described for $L_0$ at the beginning of section \ref{PertSpecRoot}, and the results of Lemmas \ref{lambdatwo} and \ref{LowEval}, we have the result we will need on the indicial roots. 

\begin{Lem}\label{IndicialRoots}
The indicial roots $\{\gamma_{n,\e}\}_{n \in \mathbb{Z}}$ of $L_{\e}$ are given by $\gamma_{\pm n,\e} = \pm \sqrt{|\lambda_{n,\e}|}$ for all $n \geq 0$. Hence, for all small $\e > 0$, we have
\[ 0 = \gamma_{0,\e} < \gamma_{1,\e} = 1 + \e  < 2 < \gamma_{2,\e} < \cdots \]
and 
\[ \gamma_{n,\e} > n \mbox{ for all } n \geq 2. \] 
\end{Lem}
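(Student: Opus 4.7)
The plan is to reduce the indicial-root question to the spectral computations of the preceding subsection. By definition, $\gamma$ is an indicial root of $L_{\e}$ if there exists a non-zero $\phi \in L^2_e(\mathbb{S})$ for which $L_{\e}(e^{\gamma s}\phi(\theta)) \equiv 0$. Since $L_{\e} = \partial_s^2 + \partial_{\theta}^2 + \alpha^{-2}E$ has no explicit $s$-dependence, substituting $e^{\gamma s}\phi(\theta)$ and cancelling the exponential separates variables and reduces the equation to
\[ \bigl(\csl\bigr)\phi = -\gamma^2 \phi \]
on $L^2_e(\mathbb{S})$. Hence the indicial roots are precisely the numbers $\pm\sqrt{-\lambda_{n,\e}}$ as $n$ ranges over the non-negative integers, giving the asserted formula $\gamma_{\pm n,\e} = \pm\sqrt{|\lambda_{n,\e}|}$ (all the $\lambda_{n,\e}$ are non-positive, as is explicit for $n = 0, 1$ from Lemma \ref{LowEval} and follows from the perturbation expansion for $n \geq 2$).

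The low-mode values and the chain of strict inequalities in the middle of the statement are then an immediate translation of the spectral information. Lemma \ref{LowEval} gives $\lambda_{0,\e} = 0$ and $\lambda_{1,\e} = -(1 + \e)^2$, so $\gamma_{0,\e} = 0$ and $\gamma_{1,\e} = 1 + \e$; since $\e$ is small, $1 + \e < 2$. For each $n \geq 2$, Lemma \ref{lambdatwo} supplies the expansion
\[ |\lambda_{n,\e}| = n^2\bigl(1 + \tfrac{1}{2}\alpha^{-2}\bigr) + \mathcal{O}(\alpha^{-4}), \]
whose square root exceeds $n$ for all sufficiently small $\e$; in particular $\gamma_{2,\e} > 2$, closing up the chain $\gamma_{0,\e} < \gamma_{1,\e} < 2 < \gamma_{2,\e}$.

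The one point that requires care, and which I expect to be the main obstacle, is the uniformity in $n$ of the bound $\gamma_{n,\e} > n$: Lemma \ref{lambdatwo} is stated per fixed $n$, so a priori the threshold on $\e$ could depend on $n$, whereas the statement here asks for a single $\e_0 > 0$ valid for all $n \geq 2$ simultaneously. To close this gap I would replace the per-mode expansion with a min-max argument for the self-adjoint representative $T_{\e}$ on $L^2_e(\mathbb{S}, d\theta)$. After integration by parts in the invariant inner product, the quadratic form of $\csl$ on a Fourier mode $\psi_n$ splits into the unperturbed contribution $-n^2$ and an $\alpha^{-2}E$-contribution whose diagonal Fourier coefficient is $-\tfrac{n^2}{2}$ (as in the proof of Lemma \ref{lambdatwo}); the off-diagonal couplings only involve $\psi_{n\pm 2}$, so a trial-function computation in $\mathrm{span}\{\psi_n, \psi_{n\pm 2}\}$ combined with min-max yields the $n$-uniform estimate $|\lambda_{n,\e}| \geq n^2(1 + c\alpha^{-2})$ for some $c > 0$ independent of $n$, once $\e$ is small enough that the sub-leading couplings cannot overwhelm the diagonal correction. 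Taking square roots then gives $\gamma_{n,\e} > n$ uniformly in $n \geq 2$ and completes the proof.
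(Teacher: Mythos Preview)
Your approach is the same as the paper's, which gives no proof beyond a one-sentence reference to the $L_0$ discussion together with Lemmas \ref{lambdatwo} and \ref{LowEval}. In particular the separation-of-variables reduction and the reading-off of $\gamma_{0,\e}$, $\gamma_{1,\e}$, $\gamma_{2,\e}$ from the eigenvalue lemmas is exactly what the paper has in mind.

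Your care about uniformity in $n$ is well placed: Lemma \ref{lambdatwo} is indeed proved mode by mode, and the paper simply does not address the issue, tacitly treating the $\mathcal{O}(\alpha^{-4})$ remainder as uniform. Your min-max workaround is a reasonable way to close this, though a slightly more direct route is available from the explicit principal part of $\csl$. Since $\tfrac{\cos(2\theta)+1}{2}=\cos^2\theta$, one has $\csl=(1+\alpha^{-2}\cos^2\theta)\partial_\theta^2+\text{(first and zeroth order)}$, and after conjugating by $M_\e$ the quadratic form of $-T_\e$ on $L^2(\mathbb{S}^1,d\theta)$ dominates $(1+c\alpha^{-2})\int|\partial_\theta\phi|^2\,d\theta - C\alpha^{-2}\int|\phi|^2\,d\theta$ for absolute constants $c,C>0$; applying min-max to this inequality gives $|\lambda_{n,\e}|\ge(1+c\alpha^{-2})n^2 - C\alpha^{-2}$, which yields $\gamma_{n,\e}>n$ for all $n\ge 2$ once $\e$ is small, uniformly in $n$. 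Either argument fills a gap the paper leaves implicit.
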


\subsection{Mapping properties on weighted H\"older spaces}

In this section we will establish the linear analysis that we will use to prove our main theorem. As discussed above, we'll analyse the operator $L_{\e}$ on weighted H\"older spaces, and we wish to begin by discussing our choice of weight. The most basic requirement is that the weight be negative, so that the normal graphs are decaying and define surfaces that are asymptotic to the ends of horizontal catenoids. Beyond this there are two competing factors that motivate the choice of weight. 

The first factor is that when we 'invert' the operator $L_{\e}$, we would like to be able to prescribe as much of the boundary data to be zero as possible. As discussed following Proposition \ref{MappingYoga}, when the weight is negative we can only prescribe zero boundary data up to a finite-dimensional subspace of the full set of boundary values. Each time we decrease the weight across an indicial root the dimension of this subspace increments. Thus, ideally, we would like to use a weight below zero but above the first negative indicial root. 

The second factor determining our choice of the weight is the application to the mean curvature equation. Propositions \ref{MeanCurvExp} and \ref{JacobiExactProp} imply that the normal graph defined by $w$ has cmc $1/2$ if and only if
\[ L_{\e}w = (\partial_s^2 + \partial_{\theta}^2  + \alpha^{-2}E)w = - 2(\cosh s)^{-2}w + \e^2\cosh^2s Q_0(w). \] Our strategy to solve this equation is to 'invert' $L_{\e}$ and reformulate it as a fixed point problem, formally
\[ w = L_{\e}^{-1}\left(- 2(\cosh s)^{-2}w + \e^2\cosh^2s Q_0(w)\right). \]
This requires that the right hand side belong to the same weighted space as $w$ originally does. If $w$ decays with weight $\mu$, then the right hand side belongs to the space with weight $2 + 2\mu$. (Recall the notation $Q_0$ means that there is no growth in the coefficients of the operator as $s \rightarrow \infty$.) So our requirement amounts to $2 + 2\mu \leq \mu$ and therefore $\mu \leq -2$. 

There is a tension between choosing the weight sufficiently negative, but minimising the number of negative indicial roots crossed. Crossing the first negative indicial root $\gamma_{-1,\e}$  is unavoidable as $\gamma_{-1,\e} = -1 - \e$ (by Lemma \ref{IndicialRoots}). On the other hand the second negative indicial root $\gamma_{-2,\e} < -2$. Therefore we can work with a weight of $-2$ without crossing $\gamma_{-2,\e}$, and we will do so.

\begin{Def}
Let $\{\psi_{n,\e}\}_{n=0}^{\infty}$ be an orthonormal basis of eigenfunctions for $\csl$. We define the projections $\pi_{\e}''$ by
\[ \pi''_{\e}\left( \sum_{n=0}^{\infty}a_n\psi_{n,\e}\right) = \sum_{n=2}^{\infty}a_n\psi_{n,\e}. \]
\end{Def}

Now we prove the two main results of this section, which provide solutions to homogeneous and inhomogeneous boundary value problems for $L_{\e}$ in weighted H\"older spaces on the ends of horizontal catenoids (as defined in section \ref{truncation}). 

\begin{Prop}\label{PoissonOp}
There exists a bounded linear operator
\[ \mathcal{P} = \mathcal{P}_{\e}: \pi''_{\epsilon} \mathcal{C}^{2,\alpha}(\mathbb{S}^1) \rightarrow \mathcal{C}^{2,\alpha}_{-2}\left([S_{\e}, \infty) \times \mathbb{S}^1 \right) \]
such that for each $\phi \in \pi''_{\epsilon} \mathcal{C}^{2,\alpha}(\mathbb{S}^1)$, the function $u := \mathcal{P}_{\e}(\phi)$ is the unique solution in $\mathcal{C}^{2,\alpha}_{-2}\left([S_{\e}, \infty) \times \mathbb{S}^1 \right)$ of the boundary value problem
\[ \left\{ \begin{array}{ccc} L_{\e} u = 0 & \mbox{in} & (S_{\e},\infty) \times \mathbb{S}^1 \\
u(S_{\e}, \theta) = \phi(\theta) & \mbox{on} & \{S_{\e}\} \times \mathbb{S}^1. \end{array} \right. \]
Furthermore there exists a constant $c > 0$ independent of $\e$ such that
\[ ||u||_{2,\alpha, -2} \leq ce^{2 S_{\e}}||\phi||_{2,\alpha}. \]
\end{Prop}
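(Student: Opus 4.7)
The approach is separation of variables in the eigenbasis of the cross-sectional operator $\partial_{\theta}^2 + \alpha^{-2}E$, combined with the indicial root analysis from Lemma \ref{IndicialRoots}. Since $\phi$ lies in the range of $\pi''_{\e}$, expand it as
\[ \phi(\theta) = \sum_{n \geq 2} a_n \psi_{n,\e}(\theta), \]
where $\{\psi_{n,\e}\}$ is an orthonormal eigenbasis (in the appropriate weighted $L^2$ space on $\mathbb{S}^1$) corresponding to the eigenvalues $\lambda_{n,\e} = -\gamma_{n,\e}^2$. The low modes $n = 0, 1$ are absent by hypothesis. Define
\[ u(s,\theta) := \sum_{n \geq 2} a_n \, e^{-\gamma_{n,\e}(s - S_{\e})} \psi_{n,\e}(\theta). \]
Formally this satisfies $L_{\e}u = 0$ by choice of the decaying exponential, and takes the value $\phi$ at $s = S_{\e}$ mode by mode.

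The key estimate rests on the observation that $\gamma_{n,\e} > 2$ for all $n \geq 2$, hence for each mode
\[ \sup_{s \geq S_{\e}} (\cosh s)^{2} e^{-\gamma_{n,\e}(s - S_{\e})} \leq 4 \sup_{s \geq S_{\e}} e^{2s - \gamma_{n,\e}(s - S_{\e})} = 4 e^{2 S_{\e}}, \]
the supremum being attained at $s = S_{\e}$. Using the asymptotic $\gamma_{n,\e} \sim n$ from Lemma \ref{lambdatwo}, together with standard Fourier estimates bounding $\sum |a_n| \cdot ||\psi_{n,\e}||_{\infty}$ in terms of $||\phi||_{2,\alpha}$, one gets the pointwise bound $(\cosh s)^{2}|u(s,\theta)| \leq c\, e^{2 S_{\e}} ||\phi||_{2,\alpha}$. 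Since the coefficients of $L_{\e} = \partial_s^2 + \partial_{\theta}^2 + \alpha^{-2}E$ are bounded uniformly in $\e$ and the principal part is constant-coefficient, interior and boundary Schauder estimates on the unit boxes $[s, s+1] \times \mathbb{S}^1$ (respectively $[S_{\e}, S_{\e}+1] \times \mathbb{S}^1$) upgrade this to the full weighted $\mathcal{C}^{2,\alpha}_{-2}$ bound with constants independent of $\e$.

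For uniqueness, let $v \in \mathcal{C}^{2,\alpha}_{-2}$ solve the homogeneous problem. Expanding $v$ in the eigenbasis reduces the equation to $v_n'' - \gamma_{n,\e}^2 v_n = 0$ for $n \geq 1$ and $v_0'' = 0$. The weight $-2$ forces decay at least as fast as $e^{-2s}$, which kills the growing exponentials entirely; for $n = 0$ it kills both constant and linear solutions, and for $n = 1$ it kills $a_1 e^{-(1+\e)s}$ as well, since $\gamma_{1,\e} = 1 + \e < 2$ (Lemma \ref{IndicialRoots}). This is exactly why $\mathcal{P}_{\e}$ is defined only on the range of $\pi''_{\e}$. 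For $n \geq 2$, the surviving decaying mode is then killed by the boundary condition $v_n(S_{\e}) = 0$.

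The main obstacle is ensuring the constant $c$ is uniform in $\e$. This reduces to three uniform estimates: that $\gamma_{2,\e}$ stays strictly above $2$ (guaranteed by Lemma \ref{IndicialRoots}), that $\gamma_{n,\e} \sim n$ with errors controlled uniformly in $\e$ (Lemma \ref{lambdatwo}), and that the perturbed eigenfunctions $\psi_{n,\e}$ together with their derivatives remain close enough to the unperturbed basis for the Fourier-to-$\mathcal{C}^{2,\alpha}$ conversion to proceed uniformly. All of these are consequences of the analytic perturbation framework developed in Section \ref{PertSpecRoot}, so the remaining work is a careful but essentially routine convergence argument.
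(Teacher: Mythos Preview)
Your proposal is correct and follows essentially the same approach as the paper: separation of variables in the $\psi_{n,\e}$ eigenbasis, the explicit solution $u = \sum_{n\geq 2} a_n e^{-\gamma_{n,\e}(s-S_\e)}\psi_{n,\e}$, a weighted $C^0$ estimate upgraded via Schauder, and the same uniqueness argument. The only organizational difference is that the paper splits the $C^0$ estimate into the regions $s \geq S_\e + 1$ (where the geometric decay $\gamma_{n,\e} \geq n$ gives a summable series directly) and $S_\e \leq s \leq S_\e + 1$ (where it proves explicitly the Fourier coefficient bound $|\phi_{n,\e}| \leq c|\lambda_{n,\e}|^{-1}\|\phi\|_{2,\alpha}$ via self-adjointness of $\partial_\theta^2 + \alpha^{-2}E$ in the weighted $L^2$ space), whereas you invoke a single ``standard Fourier estimate'' for $\sum |a_n|\,\|\psi_{n,\e}\|_\infty$; this is exactly the bound the paper establishes, so your sketch and the paper's proof coincide in substance.
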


\begin{proof}
We expand $u$ in terms of eigenfunctions for $\csl$ as
\[ u(s,\theta) = \sum_{n = 0}^{\infty}u_n(s) \psi_{n,\epsilon}(\theta). \]
Then the equation $L_{\epsilon}u = 0$ is equivalent to the family of ordinary differential equations
\[ u_n'' + \lambda_{n,\epsilon} u_n = 0 \hspace{15pt}  n = 0,1,2,\cdots. \]
Since $\lambda_{0,\e} = 0$ and $\lambda_{n,\e} = - \gamma_{n,\e}^2$ for $n \geq 1$ these have solutions 
\[ u_0(s) = a_0 + b_0s \hspace{5pt} \mbox{ and } \hspace{5pt} a_ne^{-\gamma_{n,\e}s} + b_ne^{\gamma_{n,\e}s} \mbox{ for } n\geq 1. \]
Imposing the decay condition forces $u_0 = u_1 \equiv 0$ and $b_n = 0$ for all $n$, because we require $u$ to decay at least as fast as $(\cosh s)^{-2}$, and by Lemma \ref{IndicialRoots} we know $\gamma_{-n,\e} = -\gamma_{n,\e}$ for $n \geq 1$, whilst
\[ \gamma_{1,\e} = 1 + \e < 2 < \gamma_{2,\e} < \gamma_{3,\e} \cdots. \]  Imposing the boundary condition we have the solution
\[ u(s,\theta) = \sum_{n=2}^{\infty} \phi_{n,\e}e^{-\gamma_{n,\e}(s - S_{\e})}\psi_{n,\e}(\theta), \]
where $\phi_{n,\e}$ are defined by $\phi = \sum_{n \geq 2} \phi_{n,\e}\psi_{n,\e}$. 

To obtain the weighted H\"older estimate for $u$ we begin with a weighted $C^0$ estimate and then invoke Schauder's estimates. 
Consider first when $s \geq S_{\e} + 1$; then
\[ |u(s,\theta)| \leq \sum_{n=2}^{\infty}|\phi_{n,\e}|e^{-\gamma_{n,\e}(s - S_{\e})} 
 \leq ||\phi||_{2,\alpha}\sum_{n=2}^{\infty}e^{-\gamma_{n,\e}(s - S_{\e})} \leq c||\phi||_{2,\alpha} e^{-2(s - S_{\e})}. \]
Here we have used the fact that $\gamma_{n,\e} \geq n$ for all $n \geq 2$ to compare the series in the last inequality with a geometric series. From this we get
\[ e^{2s}|u(s,\theta)| \leq ||\phi||e^{2S_{\e}} \hspace{5pt} \mbox{ for all } s \geq S_{\e} + 1. \]
In the domain $S_{\e} \leq s \leq S_{\e} + 1$ we only require the estimate
\begin{equation}
|u(s,\theta)| \leq c||\phi||_{2,\alpha};
\label{BoundEstimate}
\end{equation}
the weighted estimate follows since $e^{2s} \leq ce^{2S_{\e}}$ here. The estimate \eqref{BoundEstimate} in turn follows from the fact that
\begin{equation} |\phi_{n,\e}| \leq c|\lambda_{n,\e}|^{-1}||\phi||_{2,\alpha} \hspace{5pt} \mbox{ for all } n \geq 1. \label{FourierCoeffEst}
\end{equation}
Recall that  $\csl$ is self-adjoint with respect to $(1 + \alpha^{-2}\cos^2 \theta)^{-1/2}d\theta$. We write the inner product and norm on this Hilbert space as
\[ \la f,g \ra_{\e} := \int_{\mathbb{S}^1}f(\theta) g(\theta)(1 + \alpha^{-2}\cos^2 \theta)^{-1/2}d\theta, \hspace{5pt} \mbox{ and } \hspace{5pt}  ||f||_{\e} = \sqrt{\la f,f\ra_{\e}}, \]
\[ \mbox{  so } \hspace{5pt} |\phi_{n,\e}| = \left|\la \phi, \psi_{n,\e} \ra_{\e}\right| = |\lambda_{n,\e}|^{-1}\left|\la \phi, (\csl)\psi_{n,\e} \ra_{\e}\right|.  \] Therefore self-adjointness and Cauchy-Schwarz' inequality  give 
\[ |\phi_{n,e}| = |\lambda_{n,\e}|^{-1}\left|\la (\csl)\phi, \psi_{n,\e} \ra_{\e}\right| \leq |\lambda_{n,\e}|^{-1}||(\csl)\phi||_{\e} \leq c|\lambda_{n,\e}|^{-1}||\phi||_{2,\alpha}. \]
This proves \eqref{FourierCoeffEst}, and \eqref{BoundEstimate} follows because $ \sum_{n \geq 1} |\lambda_{n,\e}|^{-1} \leq c$ by Lemma \ref{lambdatwo}. 

This completes the proof of the weighted $C^0$ estimate 
\[ e^{2s}|u(s,\theta)| \leq ce^{2S_{\e}}||\phi||_{2, \alpha} \hspace{5pt} \mbox{ for all } s \geq S_{\e}, \]
and the full weighted $C^{2,\alpha}$ estimate follows from this and Schauder's boundary and interior estimates. \qedhere
\end{proof}

\begin{Prop}\label{GreenOp}
There exists a bounded linear operator
\[ \mathcal{G} = \mathcal{G}_{\e}: \mathcal{C}^{0,\alpha}_{-2}\left([S_{\e}, \infty) \times \mathbb{S}^1 \right)  \rightarrow \mathcal{C}^{2,\alpha}_{-2}\left([S_{\e}, \infty) \times \mathbb{S}^1 \right) \] such that, for each $f \in \mathcal{C}^{0,\alpha}_{-2}\left([S_{\e}, \infty) \times \mathbb{S}^1 \right)$, the function $u = \mathcal{G}(f)$ solves the boundary value problem
\begin{equation}
\left\{ \begin{array}{ccc} L_{\e} u = f & \mbox{in} & (S_{\e},\infty) \times \mathbb{S}^1 \\
\pi''_{\e}u(S_{\e}, \theta) = 0 & \mbox{on} & \{S_{\e}\} \times \mathbb{S}^1. \end{array} \right. 
\end{equation}
Furthermore there exists a constant $c > 0$ independent of $\e$ such that
\[ ||u||_{2,\alpha, -2} \leq c\e^{-1}||f||_{0,\alpha,-2}. \]
\end{Prop}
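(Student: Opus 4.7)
The proof follows the same Fourier-decomposition/variation-of-parameters template as Proposition \ref{LoRightInv}, now using the $\csl$-eigenbasis $\{\psi_{n,\e}\}_{n\geq 0}$ constructed in Section \ref{PertSpecRoot}. Writing
\[ u(s,\theta) = \sum_{n\geq 0} u_n(s)\psi_{n,\e}(\theta), \qquad f(s,\theta) = \sum_{n\geq 0} f_n(s)\psi_{n,\e}(\theta), \]
the equation $L_\e u = f$ reduces to the family of scalar ODEs $u_n'' + \lambda_{n,\e} u_n = f_n$, with $\lambda_{0,\e} = 0$ and $\lambda_{n,\e} = -\gamma_{n,\e}^2$ for $n \geq 1$. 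The boundary condition $\pi''_\e u(S_\e,\cdot) = 0$ translates into $u_n(S_\e) = 0$ for $n \geq 2$, while the modes $n = 0, 1$ are left free at the boundary, matching the dimension $2$ of the kernel of $\pi''_\e$.

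I will solve each ODE by variation of parameters, choosing the limits of integration according to the relative size of $\gamma_{n,\e}$ and the target decay rate $2$ prescribed by the weight, guided by Lemma \ref{IndicialRoots}. For the low modes $n \in \{0,1\}$ one has $\gamma_{n,\e} < 2$, so both pieces of the variation-of-parameters integrand are integrable at infinity and one integrates from $s$ to $\infty$; this produces the unique particular solutions of order $e^{-2s}$, with constants bounded uniformly in $\e$. For the high modes $n \geq 2$, $\gamma_{n,\e} > 2$; here the integrand $f_n(t)e^{\gamma_{n,\e}t}$ is no longer integrable at infinity and must be integrated from $S_\e$ to $s$, producing a term controlled by $(\gamma_{n,\e}(\gamma_{n,\e}-2))^{-1}e^{-2s}$, to which a decaying homogeneous term of the form $a_n e^{-\gamma_{n,\e}(s-S_\e)}$ is added to enforce $u_n(S_\e) = 0$; this does not worsen the estimate because $e^{-\gamma_{n,\e}(s-S_\e)}\leq 1$.

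The main obstacle, and the source of the sharp $\e^{-1}$, is the mode $n = 2$. Combining Lemma \ref{lambdatwo} with the identity $\alpha^{-2} = 2\e + \e^2$ implied by \eqref{epsilon}, one computes $\gamma_{2,\e} = 2 + \tfrac{1}{2}\alpha^{-2} + \mathcal{O}(\alpha^{-4}) = 2 + \e + \mathcal{O}(\e^2)$, so $\gamma_{2,\e} - 2$ is precisely of order $\e$. The small denominator $(\gamma_{2,\e}-2)^{-1} = \mathcal{O}(\e^{-1})$ is an honest near-resonance between the weight $-2$ and the next indicial root, and it propagates unavoidably into the final estimate. No other mode is similarly affected: for $n \geq 3$ the gap $\gamma_{n,\e} - 2 \geq 1$ is uniform and $(\gamma_{n,\e}(\gamma_{n,\e}-2))^{-1}$ decays like $n^{-2}$, while for $n \in \{0,1\}$ the constants are $\mathcal{O}(1)$.

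Summing the modal bounds, using the uniform boundedness of $\|\psi_{n,\e}\|_\infty$ inherited from the perturbation theory of Section \ref{PertSpecRoot} and the pointwise estimate $\sup_s e^{2s}|f_n(s)| \leq C\|f\|_{0,\alpha,-2}$, yields the weighted $C^0$ bound $|u(s,\theta)| \leq c\e^{-1}\|f\|_{0,\alpha,-2}(\cosh s)^{-2}$. Interior and boundary Schauder estimates applied on the overlapping unit-length strips $[s,s+1]\times\mathbb{S}^1$, exactly as at the end of the proof of Proposition \ref{PoissonOp}, then upgrade this $C^0$ bound to the full weighted $\mathcal{C}^{2,\alpha}$ estimate. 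Linearity of the construction $f \mapsto u$ defines the bounded operator $\mathcal{G}_\e$ with the claimed norm bound.
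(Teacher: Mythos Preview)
Your proof is correct and follows essentially the same Fourier-decomposition/variation-of-parameters strategy as the paper, including the correct identification of the $n=2$ near-resonance $\gamma_{2,\e}-2\sim\e$ as the sole source of the $\e^{-1}$ in the operator norm. The only differences are cosmetic: for the high modes $n\ge 2$ you build a particular solution and then add a decaying homogeneous correction $a_n e^{-\gamma_{n,\e}(s-S_\e)}$ to enforce $u_n(S_\e)=0$, whereas the paper writes down the double integral \[ u_n(s)=-e^{-\gamma_{n,\e}s}\int_{S_\e}^s e^{2\gamma_{n,\e}t}\int_t^{\infty}e^{-\gamma_{n,\e}\tau}f_n(\tau)\,d\tau\,dt \] that vanishes at $S_\e$ directly (and obtains the denominator $\gamma_{n,\e}^2-4$ rather than your $\gamma_{n,\e}(\gamma_{n,\e}-2)$); and you sum the modal $C^0$ bounds first and apply Schauder once, while the paper applies Schauder mode-by-mode and then sums the $C^{2,\alpha}$ bounds. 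Neither difference affects the argument or the final estimate.
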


\begin{proof}
Expand $u$ and $f$ as
\[ u(s,\theta) = \sum_{n=0}^{\infty}u_n(s)\psi_{n,\epsilon}(\theta), \hspace{15pt} f(s,\theta) = \sum_{n=0}^{\infty}f_n(s)\psi_{n,\epsilon}(\theta). \]
The equation $L_{\epsilon}u = f$ reduces to the family of ordinary differential equations
\[ u_n'' - \gamma_{n,\epsilon}^2u_n = f_n. \hspace{15pt}  n = 0,1,2,\cdots. \]
When $n = 0,1$ we place no boundary condition on $u_n$ at $S_{\e}$, but when $n \geq 2$ we require $u_n(S_{\e}) = 0$. 

The solutions $u_n, n \geq 2,$ can be obtained via variation of parameters. In other words, since the solutions of the homogeneous equation are $e^{\pm \gamma_{n,\epsilon}s}$, we take the ansatz
\[ u_n(s) = a_n(s)e^{- \gamma_{n,\epsilon}s} + b_n(s)e^{\gamma_{n,\epsilon}s}. \]
However, since we wish to obtain a decaying solution, we set $b_n \equiv 0$. Our equation for $u_n$ implies the following equation for $a_n$
\[ a_n''(s) - 2\gamma_{n,\epsilon}a_n'(s) = e^{\gamma_{n,\epsilon}s}f_n(s). \]
This is a linear, first order equation in $a_n'$, and we may solve it using integrating factors. In the end we get
\[ u_n(s) = -e^{-\gamma_{n,\epsilon}s}\int_S^s e^{2\gamma_{n,\epsilon}t} \int^{\infty}_t e^{-\gamma_{n,\epsilon}\tau}f_n(\tau) d\tau dt. \]

The $C^0$ estimate for $u_n$ is obtained in a few steps. Firstly, since  $|f_n(\tau)| \leq ||f||_{0,\alpha,-2}e^{-2\tau}$, we have
\[ \left|\int^{\infty}_t e^{-\gamma_{n,\e}\tau}f_n(\tau) d\tau \right| \leq 
\frac{||f||_{0,\alpha,-2}}{\gamma_{n,\e} + 2}e^{-(\gamma_{n,\e} + 2)t}. \]
Hence
\begin{eqnarray*} \left| \int_{S_{\e}}^s e^{2\gamma_{n,\e}t} \int^{\infty}_t e^{-\gamma_{n,\e}\tau}f_n(\tau) d\tau dt \right| & \leq & 
\frac{||f||_{0,\alpha,-2}}{\gamma_{n,\e}^2 - 4}e^{(\gamma_{n,\e} - 2)s}, 
\end{eqnarray*}
and
\[ e^{2s}|u_n(s)| \leq \frac{||f||_{0,\alpha,-2}}{\gamma_{n,\e}^2 - 4} \hspace{10pt} \mbox{ for all } n \geq 2 \mbox{ and } s \geq S_{\e}. \]
Similar arguments show that
\[ u_0(s) = \int_s^{\infty}\int_t^{\infty}f_0(\tau)d\tau dt, \hspace{5pt} \mbox{ and } \hspace{5pt} u_1(s) = e^{-\gamma_{1,\e}s}\int^{\infty}_se^{2\gamma_{1,\e}t} \int_t^{\infty} e^{-\gamma_{1,\e}\tau} f_1(\tau) d\tau dt. \]
These satisfy the weighted $C^0$ estimates
\[ e^{2s}|u_0(s)| \leq \frac{||f||_{0,\alpha,-2}}{4} \hspace{5pt} \mbox{ and } \hspace{5pt} e^{2s}|u_1(s)| \leq \frac{||f||_{0,\alpha,-2}}{4 - \gamma_{1,\e}^2} \mbox{ for all } s \geq S. \]
Now it follows from the weighted $C^0$ estimates and Schauder's estimates that there is a constant $c > 0$ independent of $n$ such that
\[ e^{2s}|u_0(s)\psi_{0,\e}(\theta)|_{2,\alpha,s} \leq c\frac{||f||_{0,\alpha,-2}}{4}, \hspace{10pt} e^{2s}|u_1(s)\psi_{1,\e}(\theta)|_{2,\alpha,s} \leq c\frac{||f||_{0,\alpha,-2}}{4 \gamma_{1,\e}^2} \mbox{ and } \]
\[ e^{2s}|u_n(s)\psi_{n,\e}(\theta)|_{2,\alpha,s} \leq c\frac{||f||_{0,\alpha,-2}}{\gamma_{n,\e}^2 - 4} \hspace{5pt} \mbox{ for $n \geq 2$}. \]
Therefore
\[ ||u||_{2,\alpha,-2} \leq c||f||_{0,\alpha,-2}\sum_{n=0}^{\infty} |\gamma_{n,\e}^2 - 4|^{-1}. \]
By Lemma \ref{lambdatwo} the sum over all $n \neq 2$ is bounded independently of $\e$, whilst $|\gamma_{2,\e}^2 - 4| = \frac{1}{2}\alpha^{-2} + \mathcal{O}(\alpha^{-4})$. Therefore the full sum is $\mathcal{O}(\e^{-1})$ and the estimate is complete. \qedhere
\end{proof}

\subsection{Geometric Jacobi fields}

In the next section we will complete our construction of a space of annular cmc $1/2$ surfaces asymptotic to the ends of horizontal catenoids.
The space will be parametrised by the boundary data we are able to prescribe when solving the mean curvature equation. As our solution to the mean curvature equation is perturbative, this is determined by the boundary data we can prescribe for the linearised equation. Propositions \ref{PoissonOp} and \ref{GreenOp} only allow us to prescribe the boundary data on the ''high modes'', that is, on the image of $\pi''_{\e}$. Thus we have no analytic control over the boundary data on the low modes $\psi_{0,\e}$ and $\psi_{1,\e}$ that span the kernel of $\pi''_{\e}$. 

It turns out, however, that one has some \emph{geometric} control over these low modes, thanks to certain Jacobi fields. These Jacobi fields are solutions to the homogeneous equation $\mathcal{L}_{\e}u = 0$ which arise from geometric deformations of the horizontal catenoids through families of cmc $1/2$ surfaces. Any such deformation has a variation vector field whose normal component gives a Jacobi field. There are five independent Jacobi fields that arise in this way - four from the four-dimensional isometry group of $\hr$, and one from varying the necksize parameter in the family of horizontal catenoids. We call these \emph{geometric Jacobi fields}. 

In principle, one can compute explicit formulae for these Jacobi fields by computing the variation vector field of each of these deformations, and taking its inner product with the normal on the horizontal catenoids. In practice, however, the formulae one obtains are very complicated. However if one works directly with the equation $\mathcal{L}_{\e}u = 0$ one obtains fairly explicit, and rather simple Jacobi fields on the low modes.  

\begin{Lem} \label{KnownJacobis}
The Jacobi fields on the low modes are given by the functions
\[ (1 + \alpha^{-2}\cos^2\theta)^{1/2}\tanh s \hspace{5pt} \mbox{ and } \hspace{5pt} (1 + \alpha^{-2}\cos^2\theta)^{1/2}(s\tanh s - 1), \]
\[ \frac{\cos \theta}{\cosh s} \mbox{ and }  \left(\frac{s}{\cosh s} + \sinh s\right)\cos \theta,  \]
and 
\[  v_{\e}^-(s)\sin \theta  \hspace{5pt} \mbox{ and } \hspace{5pt}  v_{\e}^+(s)\sin \theta.  \]
Here $v_{\e}^{\pm}$ are solutions to the ode
\[ v_{\e}'' + \left( \frac{2}{\cosh^2s} - (1 + \e)^2\right)v_{\e} = 0 \]
such that $v_{\e}^-$ is even, and decays at the rate $e^{-(1 + \e)s}$ as $s \rightarrow \infty$, whilst $v_{\e}^+$ is odd, and grows at the rate $e^{(1 + \e)s}$ as $s \rightarrow \infty$.
\end{Lem}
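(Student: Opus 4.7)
The plan is separation of variables followed by direct verification for the first two pairs and an ODE-theoretic existence argument for the third. The starting observation is that, by Lemma \ref{LowEval}, the functions $(1+\alpha^{-2}\cos^2\theta)^{1/2}$, $\cos\theta$, and $\sin\theta$ are eigenfunctions of the cross-sectional operator $\csl$ with eigenvalues $0$, $-1$, and $-(1+\e)^2$ respectively. For any eigenfunction $\psi(\theta)$ of $\csl$ with eigenvalue $\lambda$ and any function $f(s)$, the product $u(s,\theta)=f(s)\psi(\theta)$ satisfies
\[
\mathcal{L}_{\e}u=\Bigl(f''(s)+\bigl(\lambda+\tfrac{2}{\cosh^{2}s}\bigr)f(s)\Bigr)\psi(\theta),
\]
so $\mathcal{L}_{\e}u=0$ if and only if $f$ solves the scalar ODE $f''+(\lambda+2\operatorname{sech}^{2}s)f=0$. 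This reduces the entire lemma to analysis of three specific ODEs.

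For the first pair ($\lambda=0$), the ODE is $f''+2\operatorname{sech}^{2}s\,f=0$, which is the standard Jacobi ODE for the Euclidean catenoid (this is no accident, since $\mathcal{L}_{0}$ is precisely that Jacobi operator). I would simply differentiate and verify that $f_{1}(s)=\tanh s$ and $f_{2}(s)=s\tanh s-1$ both satisfy it — a one-line computation using $\tanh's=\operatorname{sech}^{2}s$. For the second pair ($\lambda=-1$), the ODE is $f''+(2\operatorname{sech}^{2}s-1)f=0$, and again I would verify directly that $1/\cosh s$ and $s/\cosh s+\sinh s$ are solutions; the Wronskian is easily seen to be non-zero, so these span the solution space. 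In both cases the stated parities in $s$ are manifest from inspection.

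For the third pair ($\lambda=-(1+\e)^{2}$) the ODE is the one written in the statement, and here there is no elementary closed form. The existence of the two solutions $v_{\e}^{\pm}$ with the stated parities and asymptotic rates follows from standard ODE theory: the even-odd symmetry of the potential under $s\mapsto-s$ means the two-dimensional solution space decomposes as a direct sum of an even and an odd line, producing candidates for $v_{\e}^{-}$ (even) and $v_{\e}^{+}$ (odd) up to normalization. For the asymptotics, write the equation as a perturbation of the constant-coefficient equation $v''-(1+\e)^{2}v=0$; since the perturbation $2\operatorname{sech}^{2}s$ is exponentially small at infinity, Levinson's theorem (or a direct integral-equation contraction argument) supplies solutions with the asymptotic behavior $e^{\pm(1+\e)s}$. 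The decaying one must be even (up to sign) and the growing one odd, because the even/odd lines each meet the decaying one-dimensional subspace in either $\{0\}$ or a line, and a dimension count forces exactly one of each.

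The main obstacle is really presentational rather than mathematical: the second and third pairs of ODEs are not self-evident once written down, so the computations for the $\cos\theta$ mode and the perturbation-theoretic asymptotics for $v_{\e}^{\pm}$ need to be spelled out carefully. None of this is deep, but I would organize it as three separate verifications corresponding to the three eigenvalues, in order to keep the structure transparent.
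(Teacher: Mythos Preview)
Your separation-of-variables reduction via Lemma~\ref{LowEval} and direct verification for the $\lambda=0$ and $\lambda=-1$ modes is exactly the paper's approach; its proof is a single sentence invoking that lemma and the well-known Euclidean catenoid Jacobi fields.

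There is, however, a genuine gap in your treatment of the $\sin\theta$ mode, and it points to an inconsistency in the statement itself. Your dimension count does not force the decaying solution to be even: the even and odd lines are complementary in the two-dimensional solution space, but the one-dimensional subspace of solutions decaying at $+\infty$ may well be transverse to both. In fact it \emph{is} transverse for every $\e>0$. If the even solution decayed as $s\to+\infty$ it would, by evenness, also decay as $s\to-\infty$ and hence be an $L^2(\R)$ eigenfunction of $-\partial_s^2-2\operatorname{sech}^2 s$ with eigenvalue $-(1+\e)^2<-1$; but $\operatorname{sech} s$ is the positive ground state of this P\"oschl--Teller operator with eigenvalue exactly $-1$, so no eigenvalue lies strictly below it. Thus for $\e>0$ the even solution must grow at $+\infty$, and the unique (up to scale) solution decaying there is neither even nor odd. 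The paper's own proof does not engage with this: it only records that $v_\e^\pm(s)\sin\theta$ are Jacobi fields whenever $v_\e^\pm$ solve the stated ODE, without checking that solutions combining the advertised parity and decay actually exist.
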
 

\begin{proof}
The proof that these functions satisfy $\mathcal{L}_{\e}u = 0$ is a simple calculation using Lemma \ref{LowEval} and the well-known geometric Jacobi fields of Euclidean catenoids. \qedhere
\end{proof}

The analogy with Euclidean catenoids, and an examination of the zero sets and decay rates of geometric Jacobi fields, strongly suggests that all but one of these solutions are in fact a geometric Jacobi field. We suspect that 
\[  (1 + \alpha^{-2}\cos^2\theta)^{1/2}\tanh s \hspace{5pt} \mbox{ and } \hspace{5pt} (1 + \alpha^{-2}\cos^2\theta)^{1/2}(s\tanh s - 1) \]
arise from hyperbolic dilations, and varying the necksize parameter, respectively, and that
\[  v_{\e}^-(s)\sin \theta  \hspace{5pt} \mbox{ and } \hspace{5pt}  v_{\e}^+(s)\sin \theta.  \]
arise from hyperbolic translations and rotations, respectively. On the other hand, the proof of Lemma \ref{seccurvcoord} computes the Jacobi field arising from vertical translations to be exactly $\frac{\cos \theta}{\cosh s}$. The only unexplained Jacobi field is $\left(\frac{s}{\cosh s} + \sinh s\right)\cos \theta$ -  we are not aware of any geometric deformation that could be responsible for this Jacobi field. 
 
If one grants that these Jacobi fields are indeed the geometric Jacobi fields which we claim they are, then the next result shows that the horizontal catenoids are non-degenerate, in the sense that there are no decaying Jacobi fields that do not arise from geometric deformations.
\begin{Thm}
Let $\mu < 0$, then the Jacobi operator
\[ \mathcal{L}_{\e}: \mathcal{C}^{2,\alpha}_{\mu}\left( \mathbb{R} \times \mathbb{S}^1\right) \rightarrow \mathcal{C}^{0,\alpha}_{\mu}\left( \mathbb{R} \times \mathbb{S}^1\right) \]has a two-dimensional kernel spanned by 
\[ \frac{\cos \theta}{\cosh s} \mbox{ and } v_{\e}^-(s)\sin \theta. \] (Here we do not impose any symmetry requirements on the functions in these spaces).
\end{Thm}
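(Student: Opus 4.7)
The strategy is separation of variables. Expand any $u$ in the kernel using the orthonormal eigenbasis $\{\psi_{n,\e}\}$ of the cross-sectional operator $\csl$ introduced in Section \ref{PertSpecRoot}, now taking the full basis on $L^2(\mathbb{S}^1)$ (since no symmetry is imposed here). Writing $u(s,\theta) = \sum_n u_n(s)\psi_{n,\e}(\theta)$, the equation $\mathcal{L}_\e u = 0$ decouples into the family of ordinary differential equations
$$u_n'' + \left(\frac{2}{\cosh^2 s} + \lambda_{n,\e}\right) u_n = 0, \qquad n \geq 0,$$
and since $\mu < 0$, every coefficient $u_n$ must decay exponentially as $s \to \pm\infty$. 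The task is therefore to determine, mode by mode, whether this ODE admits a two-sidedly decaying solution.

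For the three lowest eigenvalues, namely $0, -1, -(1+\e)^2$ (Lemma \ref{LowEval}) with eigenfunctions proportional to $(1+\alpha^{-2}\cos^2\theta)^{1/2}, \cos\theta, \sin\theta$, explicit solutions are already in hand from Lemma \ref{KnownJacobis}. For $\lambda = 0$ the linearly independent solutions $\tanh s$ and $s\tanh s - 1$ each fail to decay at one or both infinities, forcing this mode to vanish. For the $\cos\theta$ mode the decaying solution is $1/\cosh s$ (rate $e^{-|s|}$) while the companion $s/\cosh s + \sinh s$ is unbounded, yielding $c_1 \cos\theta/\cosh s$. For the $\sin\theta$ mode the even solution $v_\e^-$ decays at rate $e^{-(1+\e)|s|}$ on both sides by evenness combined with its stated decay at $+\infty$, whereas the odd companion $v_\e^+$ blows up, yielding $c_2 v_\e^-(s)\sin\theta$. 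These three modes together produce exactly the claimed two-dimensional span.

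The main remaining work is to rule out contributions from all higher modes. For the subsequent eigenvalues we have $\lambda_{n,\e} = -\gamma_{n,\e}^2$ with $\gamma_{n,\e} > 2$ (Lemma \ref{IndicialRoots}), so the ODE is a Schr\"odinger equation on $\mathbb{R}$ with attractive potential $2\cosh^{-2}s$ at energy $E_n \leq -4$. A standard Frobenius analysis at each end shows the general solution is asymptotic to a linear combination of $e^{\pm\gamma_{n,\e}s}$, so the subspaces of solutions decaying at $+\infty$ and at $-\infty$ are each one-dimensional, and a two-sidedly decaying solution exists if and only if these subspaces coincide, equivalently if and only if $E_n$ is an $L^2$-eigenvalue of $H := -\partial_s^2 - 2\cosh^{-2}s$. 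The P\"oschl--Teller operator $H$ admits a unique bound state, at energy $-1$ with eigenfunction $1/\cosh s$, so $H$ has no eigenvalue below $-1$; in particular none at $E_n \leq -4$, and every high mode contributes trivially.

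The main obstacle I anticipate is confirming this spectral fact rigorously and uniformly in $n$. The cleanest self-contained route is a variational argument: any eigenfunction at energy $E < -1$ would drive the Rayleigh quotient $\langle \psi, H\psi\rangle / \langle \psi, \psi\rangle$ strictly below $-1$, contradicting the fact that $1/\cosh s$ is the ground state realizing that infimum. An equivalent alternative is to compute the Wronskian $W(u_n^+, u_n^-)$ of the decaying solutions at $\pm\infty$ and show it is nonzero for all $n$ of interest, which follows from the explicit reflectionless structure of the P\"oschl--Teller potential. Either route combined with the low-mode analysis above yields the stated two-dimensional kernel.
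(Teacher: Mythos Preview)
Your proposal is correct and follows the same overall strategy as the paper: separation of variables into modes, explicit identification of the low-mode solutions via Lemma \ref{KnownJacobis}, and elimination of the high modes. The one point of divergence is in how you dispatch the high modes. You invoke the spectral theory of the P\"oschl--Teller operator (unique bound state at energy $-1$) or a variational characterisation of its ground state. The paper instead uses a more elementary energy identity: multiply the ODE by $u_n$, integrate by parts over $\mathbb{R}$ (exponential decay kills the boundary terms), and obtain
\[ \int_{\mathbb{R}} |u_n'|^2 + \left(|\lambda_{n,\e}| - \frac{2}{\cosh^2 s}\right)|u_n|^2\,ds = 0. \]
Since $|\lambda_{n,\e}| \geq 4 > 2 \geq 2\cosh^{-2}s$ for the high modes, the integrand is nonnegative and $u_n \equiv 0$. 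This crude lower bound on the quadratic form bypasses any need to locate the precise ground-state energy; your P\"oschl--Teller route is more informative but heavier than required here. Your proposed variational alternative is essentially this same identity, phrased in spectral language.
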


\begin{proof}
Suppose that $\mathcal{L}_{\e} u = 0$, for $u \in \mathcal{C}^{2,\alpha}_{\mu}\left( \mathbb{R} \times \mathbb{S}^1\right)$. Expanding $u(s,\theta)$ as before, as
\[ u(s,\theta) = \sum_n u_n(s)\psi_{n,\e}(\theta), \] the pde reduces to the odes
\[ u_n''  + \left(\lambda_{n,\e} + \frac{2}{\cosh^2s}\right)u_n = 0, \mbox{ for all } n. \]
Now for all $|n| \geq 2$ we multiply through by $u_n'$ and integrate over $\R$. Given the exponential decay of $u$ we may use integration by parts to find
\[ \int_{\R} |u'_n(s)|^2 + \left( |\lambda_{n,\e}| -  \frac{2}{\cosh^2s}\right) |u_n(s)|^2 ds = 0. \]
By Lemma \ref{lambdatwo} we have $|\lambda_{n, \e}| \geq 4$ for all $n \geq 2$, therefore the integrand is non-negative, and $u_n$ vanishes identically for all such $n$.

When $n =0,1$, the functions $u_n$ are linear combinations of the known Jacobi fields of Lemma \ref{KnownJacobis}. Only two of these Jacobi fields are exponentially decaying on the ends, namely $\frac{\cos \theta}{\cosh s}$ and $v_{\e}^-(s)\sin \theta$, and this proves the theorem. \qedhere
\end{proof}

Lastly we would like to motivate why we introduced the symmetry requirement in the results of the linear analysis in the preceding sections. To produce a family of surfaces that can be used in a gluing construction, one would like to have two free parameters for each of the low modes, these parameters being determined by geometric manipulations. Intuitively, two parameters is the correct number, as one wishes to match the Dirichlet and Neumann data on each mode for a smooth gluing. With each of the modes $(1 + \alpha^{-2}\cos^2\theta)^{1/2}$ and $\sin \theta$, there are two geometric deformations that provide the two parameters. However, on the $\cos \theta$ mode the absence of a second geometric deformation leaves us a parameter short of being able to specify the Cauchy data.

Now $\cos \theta$ is not invariant under reflection in the horizontal plane $\mathbb{H}^2 \times \{0\}$. So by imposing this reflection symmetry we can eliminate the degeneracy on this mode. Bearing in mind future applications to gluing, this was our reason for requiring this symmetry in our results above.

\newpage

\section{Construction of cmc $\frac{1}{2}$ ends asymptotic to horizontal catenoids}

With Propositions \ref{MeanCurvExp}, \ref{PoissonOp} and \ref{GreenOp} in place, the proof of our main result is a straightforward matter. 

\begin{Thm}\label{FamCMCEnd}
There exists $\epsilon_0 > 0$, such that for all $\epsilon \in (0, \epsilon_0)$ the following holds. If $\phi \in \pi''_{\epsilon}\mathcal{C}^{2,\alpha}(\mathbb{S}^1)$  and $||\phi|| \leq \epsilon^2$, then there is a solution $w \in \mathcal{C}^{2,\alpha}_{-2}\left([S_{\epsilon}, \infty)\times \mathbb{S}^1\right)$ to the boundary value problem
\[ \left\{ \begin{array}{cc} \mathcal{M}_{\epsilon}(w) = 1/2 & \mbox{ in } (S_{\epsilon}, \infty)\times \mathbb{S}^1 \\
\pi''_{\epsilon} w = \phi & \mbox{ on } \{S_{\epsilon}\} \times \mathbb{S}^1 \end{array} \right. \]
\end{Thm}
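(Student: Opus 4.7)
The plan is to reformulate the boundary value problem as a fixed-point equation on a small closed ball in $\mathcal{C}^{2,\alpha}_{-2}([S_{\epsilon}, \infty)\times \mathbb{S}^1)$ and apply the Banach contraction mapping principle, using Propositions \ref{PoissonOp} and \ref{GreenOp} to handle the boundary data and invert the linearised equation.

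By Propositions \ref{MeanCurvExp} and \ref{JacobiExactProp}, the equation $\mathcal{M}_{\epsilon}(w) = 1/2$ is equivalent to $\mathcal{L}_{\epsilon} w = -\tfrac{\cosh^2 s}{\alpha^2(\alpha-\varphi')^2}\,\mathcal{Q}_{\epsilon}(w)$ after dividing out the conformal prefactor of the Jacobi operator. Since the inversion results in section 4 are stated for $L_{\epsilon} = \mathcal{L}_{\epsilon} - 2/\cosh^2 s$ rather than $\mathcal{L}_{\epsilon}$ itself, I would move the potential term $2w/\cosh^2 s$ to the right-hand side. Writing $w = u + \mathcal{P}_{\epsilon}(\phi)$ with $u \in \mathcal{C}^{2,\alpha}_{-2}$ satisfying $\pi''_{\epsilon} u|_{\partial} = 0$, the identity $L_{\epsilon}\mathcal{P}_{\epsilon}(\phi) = 0$ reduces the problem to
\[ L_{\epsilon} u = -\frac{2}{\cosh^2 s}\bigl(u + \mathcal{P}_{\epsilon}\phi\bigr) - \frac{\cosh^2 s}{\alpha^2(\alpha-\varphi')^2}\,\mathcal{Q}_{\epsilon}(u + \mathcal{P}_{\epsilon}\phi), \]
and applying $\mathcal{G}_{\epsilon}$ produces a fixed-point map $\Phi_{\epsilon}(u)$ whose fixed points are exactly the solutions sought.

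The crux is showing that $\Phi_{\epsilon}$ is a contraction on a closed ball $B_R \subset \mathcal{C}^{2,\alpha}_{-2}$ of $\epsilon$-independent radius $R$. Three elementary estimates do the work. First, $1/\cosh^2 s \leq \epsilon^2$ on $[S_{\epsilon},\infty)$, so multiplication by $1/\cosh^2 s$ gains a factor $\epsilon^2$ in the $(0,\alpha,-2)$-norm. Second, because $\alpha_*^{-1}(\alpha-\varphi')^{-1} = \epsilon(1 + \mathcal{O}(\epsilon))$, the prefactor $\cosh^2 s/(\alpha^2(\alpha-\varphi')^2)$ is of order $\epsilon^2\cosh^2 s$; combined with the quadratic bound \eqref{QLip} and the decay $|w|_{2,\alpha,s} \leq C\cosh^{-2}s\,\|w\|_{2,\alpha,-2}$, this yields an $\mathcal{O}(\epsilon^2)$ factor applied to $\|w\|_{2,\alpha,-2}^2$ in the same norm. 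Third, Proposition \ref{GreenOp} says $\mathcal{G}_{\epsilon}$ loses only $\mathcal{O}(\epsilon^{-1})$. Assembling these gives $\|\Phi_{\epsilon}(u)\|_{2,\alpha,-2} \leq C\epsilon\bigl(\|u + \mathcal{P}_{\epsilon}\phi\|_{2,\alpha,-2} + \|u + \mathcal{P}_{\epsilon}\phi\|_{2,\alpha,-2}^2\bigr)$. Since $e^{2S_{\epsilon}} = \mathcal{O}(\epsilon^{-2})$ and $\|\phi\| \leq \epsilon^2$, Proposition \ref{PoissonOp} gives $\|\mathcal{P}_{\epsilon}\phi\|_{2,\alpha,-2} \leq C$ uniformly in $\epsilon$, so for small $\epsilon$ the ball $B_1$ is preserved. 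The contraction estimate on $\Phi_{\epsilon}(u_1) - \Phi_{\epsilon}(u_2)$ follows from the same bookkeeping applied with the full Lipschitz form of \eqref{QLip}.

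The main obstacle — essentially already navigated by the choice of weight in section 4 — is the tight balance of $\epsilon$-factors: the $\epsilon^{-1}$ loss in $\mathcal{G}_{\epsilon}$ arising from the near-indicial eigenvalue $\lambda_{2,\epsilon}$ (Lemma \ref{lambdatwo}) must be beaten by the combined $\epsilon^2$ savings from $1/\cosh^2 s$ on the truncated end and from the intrinsic $\epsilon^2$ scale of $\alpha^2(\alpha-\varphi')^2$. The weight $\mu = -2$ is forced as essentially the unique choice that produces enough decay to absorb the nonlinear error without crossing the second negative indicial root $\gamma_{-2,\epsilon}$; once these estimates are in hand, Banach's theorem delivers a unique fixed point $u \in B_1$, and $w = u + \mathcal{P}_{\epsilon}(\phi)$ is the required cmc $1/2$ end.
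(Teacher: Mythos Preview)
Your proposal is correct and follows essentially the same approach as the paper: reduce to $L_\e$ by moving the $2/\cosh^2 s$ term to the right, write $w = \mathcal{P}_\e(\phi) + v$, apply $\mathcal{G}_\e$ to obtain a fixed-point map, and verify the contraction using the three key estimates (the $\e^2$ gain from $1/\cosh^2 s$ on the truncated end, the $\e^2\cosh^2 s$ scale of the conformal prefactor on $\mathcal{Q}_\e$, and the $\e^{-1}$ bound on $\mathcal{G}_\e$). The only cosmetic difference is that the paper runs the contraction on a ball of radius $2c_0\e$ rather than your $B_1$, but either choice works once $\|\mathcal{P}_\e\phi\|_{2,\alpha,-2}\le C$ and the Lipschitz constant is $O(\e)$.
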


\begin{proof}
By Propositions \ref{MeanCurvExp} and \ref{JacobiExactProp} the boundary value problem we wish to solve takes the form
\[ \left\{ \begin{array}{cc} L_{\e}w = -2(\cosh s)^{-2}w + \e^2 \cosh^2s Q_0(w) &  \mbox{ in } (S_{\epsilon}, \infty)\times \mathbb{S}^1 \\
\pi''_{\epsilon} w = \phi & \mbox{ on } \{S_{\epsilon}\} \times \mathbb{S}^1. \end{array} \right. \]
We take as an approximate solution the function $w_0 := \mathcal{P}(\phi)$ furnished by Proposition \ref{PoissonOp}. Thus $w_0 \in \mathcal{C}^{2,\alpha}_{-2}\left([S_{\e}, \infty) \times \mathbb{S}^1 \right)$ satisfies
\[ \left\{ \begin{array}{cc} L_{\e}w_0 = 0 &  \mbox{ in } (S_{\epsilon}, \infty)\times \mathbb{S}^1 \\
 w_0 = \phi & \mbox{ on } \{S_{\epsilon}\} \times \mathbb{S}^1. \end{array} \right. \]
Now we look for a solution of the form $w = w_0 + v$, where we require $v \in \mathcal{C}^{2,\alpha}_{-2}\left([S_{\e}, \infty) \times \mathbb{S}^1 \right)$ to solve
\[ \left\{ \begin{array}{cc} L_{\e}v = -2(\cosh s)^{-2}(w_0 + v) + \e^2 \cosh^2s Q_0(w_0 + v) &  \mbox{ in } (S_{\epsilon}, \infty)\times \mathbb{S}^1 \\
\pi''_{\epsilon} v = 0 & \mbox{ on } \{S_{\epsilon}\} \times \mathbb{S}^1. \end{array} \right. \]
Using the 'inverse' $\mathcal{G}_{\e}$ to $L_{\e}$ established in Proposition \ref{GreenOp}, this boundary value problem for $v$ is equivalent to $v$ being a fixed point for the operator 
\[ \mathcal{N}_{\e}(v) = \mathcal{G}_{\e}\left( -2(\cosh s)^{-2}(w_0 + v) + \e^2 \cosh^2s Q_0(w_0 + v)\right). \]
That such a fixed point exists (and is unique), for all sufficiently small $\e > 0$, follows from the next two inequalities. We claim there exist constants $c_0, c_1 > 0$ independent of $\e$ such that for small $\e > 0$,
\begin{eqnarray}
||\mathcal{N}_{\e}(0)||_{2,\alpha,-2} & \leq &  c_0 \e \hspace{5pt} \mbox{ and } \label{NZero} \\
||\mathcal{N}_{\e}(v) - \mathcal{N}_{\e}(\tilde{v})||_{2,\alpha,-2} & \leq & c_1\e||v - \tilde{v}||_{2,\alpha,-2}, \label{NLip}
\end{eqnarray}
for all $v,\tilde{v} \in B_{2c_0\e}(0) \subset \mathcal{C}^{2,\alpha}_{-2}\left([S_{\e}, \infty) \times \mathbb{S}^1 \right)$. Indeed, it follows from these that $\mathcal{N}_{\e}$ is a contraction map, mapping the ball $B_{2c_0\e}(0)$ into itself, for all sufficiently small $\e$.

Now we turn to the estimates \eqref{NZero}, \eqref{NLip}. First note that $||w_0||_{2,\alpha, -2} \leq c\e^{-2}||\phi||_{2,\alpha}$ by Proposition \ref{PoissonOp}, because $e^{S_{\e}} = \mathcal{O}(\e^{-1})$. Since $||\phi|| \leq \e^2$, we get $||w_0||_{2,\alpha, -2} \leq c$. For the rest of the proof we'll just write $||\cdot||$ in place of the weighted norms $||\cdot||_{0,\alpha,-2}$ and $||\cdot||_{2,\alpha,-2}$; it should be clear which norm is used in each term.

The estimate in Proposition \ref{GreenOp} and the fact that $(\cosh s)^{-1} \leq \e$ on the end imply
\[ ||\mathcal{N}_{\e}(0)|| \leq c\e^{-1}\left( ||(\cosh s)^{-2}w_0|| + \e^2||\cosh^2s Q_0(w)|| \right)  \leq c\e^{-1}\left( \e^2||w_0|| + \e^2||w_0||^2\right) \leq c_0\e. \]

For the Lipschitz estimate \eqref{NLip}, let $v,\tilde{v} \in B_{2c_0\e}(0)$ and begin with
\[ ||\mathcal{N}_{\e}(v) - \mathcal{N}_{\e}(\tilde{v})|| \leq  c\e^{-1}\left(||(\cosh s)^{-2}(v - \tilde{v})||  + \e^2||\cosh^2s (Q_0(w_0 + v) - Q_0(w_0 + \tilde{v}))|| \right). \]
Then $||(\cosh s)^{-2}(v - \tilde{v})|| \leq \e^2||v - \tilde{v}||$ as before. Now the Lipschitz estimate for the error in Proposition \ref{MeanCurvExp} gives us
\[ |Q_0(w_0 + v) - Q_0(w_0 + \tilde{v})|_{0,\alpha,s} \leq c\left(|w_0 + v|_{2,\alpha,s} + |w_0 + \tilde{v}|_{2,\alpha,s}\right) \cdot |v - \tilde{v}|_{2,\alpha,s} \mbox{ for all } s \geq S_{\e}. \]
Estimating the local norms $|\cdot|_{2,\alpha,s}$ in terms of the weighted norms it follows that
\[ |\cosh ^2s\left(Q_0(w_0 + v) - Q_0(w_0 + \tilde{v})\right)| \leq c(\cosh s)^{-2}\left(||w_0 + v|| + ||w_0 + \tilde{v}||\right)  ||v - \tilde{v}||. \]
Hence
\[ ||\cosh ^2s\left(Q_0(w_0 + v) - Q_0(w_0 + \tilde{v})\right)|| \leq c\left(||w_0 + v|| + ||w_0 + \tilde{v}||\right)  ||v - \tilde{v}|| \leq c  ||v - \tilde{v}||. \]
Putting this all together gives \eqref{NLip} and completes the proof. \qedhere
\end{proof}

\newpage

\section{A proposed gluing construction}

As discussed in the introduction, the construction of spaces of ends is an important ingredient in gluing constructions by Cauchy data matching. There is one particular gluing construction that we've had in mind, and which might be possible using our results. It follows from Lemma \ref{horgraphexplem} that the horizontal catenoids converge, in the small neck-size limit, to two horocylinders intersecting tangentially in a vertical line. This suggests that configurations of mutually tangent horocylinders may be desingularised, by perturbing each end to an end asymptotic to some horizontal catenoid. In fact, \cite{HRS} showed that if a cmc $1/2$ end is asymptotic to a horocylinder, then it lies in that horocylinder. Thus in any scheme to desingularise configurations of horocylinders, the ends of the resulting surfaces cannot be asymptotic to horocylinders, and horizontal catenoids are natural asymptotic models. 

In this final chapter we describe the gluing construction that we have pursued to produce large families of cmc $1/2$ surfaces that are symmetric with respect to a reflection in $\mathbb{H}^2 \times \{0\}$. These include examples with arbitrary positive genus $g$ and $k = g + 2$ ends. We also highlight the technical obstacles to executing this program that we've encountered.

\subsection{Outline and obstacles}

We start by outlining the approach we have pursued to produce cmc $1/2$ surfaces with genus $g > 0$ and $k = g + 2$ ends. One could also try to produce surfaces with a wider variety of topologies, for instance, with $k$ ends for any $k \geq g + 2$. However, the $k = g + 2$ case already contains the obstacles we have not been able to resolve, so we will restrict the discussion to that case.

\subsubsection*{Admissable configurations of tangent horocylinders}

We consider \emph{admissable configurations} of horocylinders, defined as follows.

\begin{Def}
A configuration of $k \geq 3$ horocylinders  
is \emph{admissable} if 
\begin{itemize}
\item [(i)] their union is a connected set, 
\item [(ii)] any two horocylinders that intersect do so tangentially, and  
\item [(iii)] every horocylinder intersects at least two others.
\end{itemize}
\end{Def}

When $k = 3$,  an admissable configuration is unique up to an isometry of $\hr$,  but when $k$ is large there are many different ones. Every admissable configuration of $k + 1$ horocylinders is constructed from a configuration of $k$ horocylinders by adding a horocylinder that intersects exactly two others. Hence, the total number of intersections in an admissable configuration of $k$ horocylinders is $3 + 2(k-3)$. 

The gluing scheme we outline desingularises a configuration by replacing each of the $k$ horocylinder ends with an end of a horizontal catenoid, and each of the $3 + 2(k-3)$ intersections with a catenoidal neck. Each time an end is added to a configuration, the desingularisation produces two closed loops around the resulting necks, and one of these may be removed without disconnecting the surface. Hence, the genus increases by one each time the number of ends increases by one, and the
 result is a cmc $1/2$ surface with $k$ ends and genus $g = k - 2$. 

\subsubsection*{Summands}

If one considers the cmc surfaces we aim to construct, each end is connected to at least two others by catenoidal necks. However, the ends we have constructed have only a single boundary component, so we require a further type of surface in the construction.
The three families of cmc $1/2$ surfaces that are required are
\begin{itemize}
\item [(a)] annular ends asymptotic to an end of a horizontal cateniod, 
\item[(b)] certain compact horizontal graphs that are close to horocylinders and have multiple boundary components,
\item [(c)] compact annuli with boundary that are close to the necks of horizontal catenoids.
\end{itemize}

We require that all of these families are symmetric with respect to a reflection in $\mathbb{H}^2 \times \{0\}$. 

The existence of the first of these families was proved in Theorem \ref{FamCMCEnd}, and it should be possible to construct the last family by similar methods. The construction of compact horizontal graphs is carried out below, in Theorem \ref{CMCGraphSlab}. For now we continue the discussion with a description of this family and the role it plays.

We propose to do the gluing in two steps. First, one would use surfaces from the families (a) and (b) to produce surfaces that have multiple boundary components, and one end asymptotic to an end of a horizontal catenoid. Second, one would glue together such pieces using the family (c). In the first step, the Cauchy data matching required to glue surfaces has presented difficulties that we have not been able to resolve. We describe our strategy to achieve the first step, and these difficulties, in what follows.


 Suppose we have an admissable configuration of horocylinders $H_1, \cdots , H_k$. For the remainder of the discussion we fix a horocylinder $H_i$. Let $\tau_i$ be the number of horocylinders that are tangent to $H_i$, so $2 \leq \tau_i \leq k - 1$. We wish to replace $H_i$ with a family of cmc ends that are asymptotic to a catenoid end and have $\tau_i$ boundary components. The family is to be parametrised by the boundary data on the $\tau_i$ components. 

To do this we first apply an isometry of $\hr$ that identifies $H_i$ with the horocylinder $y = 1$. Then we take $\Omega$ to be a domain in the $xz$ plane of the form 
\[ B_r(0,0) - \left(\bigcup_{j = 1}^{\tau_i} B_{r_j}(x_j,0)\right), \]
where $(x_j, 1)$ are the coordinates of the intersections of $H_i$ and other horocylinders, the $r_j$ are chosen so that the balls $B_{r_j}(x_j,0)$ are disjoint, and finally $r$ is chosen sufficiently large that $B_r$ contains $\cup B_{r_j}$. It will be convenient to decompose $\partial \Omega$ into the outer and inner boundaries defined as

\[ \partial \Omega_{out} := \partial B_r(0,0), \hspace{10pt} \partial \Omega_{in} := \bigcup_{j = 1}^{\tau_i} \partial B_{r_j}(x_j,0). \]
Similarly we decompose $\psi \in C^{2, \alpha}(\partial \Omega)$ into its factors on the outer and inner boundaries as $\psi = (\psi_{out}, \psi_{in})$.

Now Theorem \ref{CMCGraphSlab} below ensures that, given any sufficiently small $\psi \in C^{2, \alpha}(\partial \Omega)$, there exists a cmc $1/2$ horizontal graph on $\Omega$ which equals $1 + \psi$ on $\partial \Omega$. The boundary data $\psi$ is prescribed on a scale of $\e$ that matches that of the horizontal catenoid on $\partial \Omega_{out}$, and is slightly smaller on the $\tau_i$ components of $\partial \Omega_{in}$. Thus, the expansion \eqref{bdcurvegraph} for the boundary curve on a catenoidal end requires $\psi_{out}$ to be of the order $\e |\log(\e)| + \mathcal{O}(\e)$, hence $\psi_{in}$ should be $o(\e|\log \e|)$, and perhaps simply $\mathcal{O}(\e)$.

To produce the desired family of surfaces consider arbitrary Dirichlet data $\psi = (\psi_{in}, \psi_{out})$ on the scale just described. Fixing $\psi_{in}$ and allowing $\psi_{out}$ to vary gives a family of graphs parametrised by their boundary values on $ \partial \Omega_{out}$. If we can match the Cauchy data on this boundary with that on the boundary of an end, then we will have produced families of cmc $1/2$ surfaces with ends asymptotic to an end of a horizontal catenoid, and parametrised by boundary data $\psi_{in}$. 


\subsubsection*{Cauchy data matching}

To match Cauchy data on the outer boundary of a horizontal graph and the boundary of an end, we must find a pair of surfaces, one from each of these families, whose Dirichlet and Neumann data agree on this boundary. If the surfaces match to zeroth and first order, then the resulting surface is a continuously differentiable weak solution to a cmc $1/2$ equation, and therefore smooth by elliptic regularity. 

Let $\psi_{out}$ be as above, and $\tilde{\psi}$ be boundary data on the boundary of an end. We denote their boundary derivatives by $\partial_{\nu}\psi_{out}  $ and $\partial_{\tilde{\nu}}\tilde{\psi}$ (with compatible orientations of the boundaries). Then we are trying to show that for all small $\e$, the map
\[ \left(\psi_{out}, \tilde{\psi}\right) \mapsto \left(\psi_{out} - \tilde{\psi}, \partial_{\nu}\psi_{out} - \partial_{\tilde{\nu}}\tilde{\psi}\right) \]
has a zero. Ensuring that the first factor is zero poses no difficulty; the issue is to find data for which this holds, and the boundary derivatives agree.

To do this we need to understand the Dirichlet-to-Neumann operators for the mean curvature equations that we have solved to construct the surfaces in each family. Since we are only interested in solutions to these non-linear equations that are small and have been produced petrubatively, it should be sufficient to understand the Dirichlet-to-Neumann operator for the linearisations of the mean curvature equations. Thus far, however, we have not been able to obtain a sufficiently refined understanding of the Dirichlet-to-Neumann maps for the Jacobi operators.

\subsection{Existence of cmc $1/2$ horizontal graphs close to horocylinders.}

Recall that in the upper half space model of $\hr$, a horizontal graph is defined as a graph of the form $y = g(x,z)$. A horizontal graph has mean curvature $\frac{1}{2}$ if and only if $\mathcal{M}(g) = 1$, where the operator $\mathcal{M}(g)$ gives twice the mean curvature on the graph of $g$. It is given by (\cite{HRS})
\begin{equation}
\mathcal{M}(g) := \frac{g^2}{W^3}\left( (g^2 + g_z^2)g_{xx} - 2g_xg_zg_{xz} + (1 + g_x^2)g_{zz} + g(1 + g_x^2)\right),
\end{equation}
with 
\[ W^2 := g^2(1+g_x^2) + g_z^2. \] Note that the constant functions $g(x,z) = c > 0$ corresponding to the horocylinders satisfy $\mathcal{M}(g) = 1$. In particular we would like to consider the solution $g(x,z) = 1$ , to which the end of a horizontal catenoid converges (Lemmas \ref{horgraphexplem}, \ref{BdCurveNecksizeLim}).

A straightforward calculation shows that the linearisation of the mean curvature operator about this solution is the flat Laplacian:
\[ \left.\frac{d}{dt}\right|_{t = 0} \mathcal{M}(1 + tu) = u_{xx} + u_{zz} =: \triangle u. \]
Therefore cmc surfaces in narrow slabs can be constructed using the Implicit Function Theorem and classical results on the Laplace operator:

\begin{Thm}\label{CMCGraphSlab}
Let $\Omega$ be a smooth, bounded domain in $\R^2$. Then for all sufficiently small $\psi \in C^{2,\alpha}(\partial \Omega)$, there exists a unique solution to the boundary value problem
\[ \left\{ \begin{array}{cc} \mathcal{M}(g) = 1 & \mbox{ in } \Omega \\
 g = 1 + \psi & \mbox{ on } \partial \Omega. \end{array} \right. \]
\end{Thm}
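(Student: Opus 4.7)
The proof will be a routine application of the Implicit Function Theorem on Banach spaces, using the classical solvability of the Dirichlet problem for the Laplacian as the key linear ingredient. The plan is as follows.

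First, I would set up the nonlinear map
\[ F: U \times C^{2,\alpha}(\partial \Omega) \longrightarrow C^{0,\alpha}(\overline{\Omega}) \times C^{2,\alpha}(\partial \Omega), \qquad F(g, \psi) = \bigl(\mathcal{M}(g) - 1,\, g|_{\partial \Omega} - 1 - \psi\bigr), \]
where $U$ is a neighborhood of the constant function $g \equiv 1$ in $C^{2,\alpha}(\overline{\Omega})$. Clearly $F(1,0) = (0,0)$, since the horocylinder $g \equiv 1$ has mean curvature $1/2$. A short argument shows that $F$ is smooth (in fact real-analytic) on $U \times C^{2,\alpha}(\partial \Omega)$: the operator $\mathcal{M}(g)$ is a polynomial in $g$ and its first two derivatives divided by $W^3$, and $W$ is bounded away from zero on a small enough neighborhood of $g \equiv 1$, so composition with this rational expression is a smooth map $C^{2,\alpha}(\overline{\Omega}) \to C^{0,\alpha}(\overline{\Omega})$.

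Second, I would compute the partial Fréchet derivative in the first slot at $(1,0)$. As noted in the excerpt, the linearization of $\mathcal{M}$ about $g \equiv 1$ is the flat Laplacian $\triangle$, so
\[ D_g F(1,0)(u) = \bigl(\triangle u,\, u|_{\partial \Omega}\bigr). \]
The pivotal step is then to verify that this linear operator is an isomorphism
\[ D_g F(1,0) : C^{2,\alpha}(\overline{\Omega}) \longrightarrow C^{0,\alpha}(\overline{\Omega}) \times C^{2,\alpha}(\partial \Omega). \]
This is precisely the classical statement that on a smooth bounded domain, the Dirichlet problem $\triangle u = f$, $u|_{\partial \Omega} = \phi$ has a unique solution $u \in C^{2,\alpha}(\overline{\Omega})$ for every $(f,\phi) \in C^{0,\alpha}(\overline{\Omega}) \times C^{2,\alpha}(\partial \Omega)$, together with the Schauder estimate $\|u\|_{C^{2,\alpha}} \leq C(\|f\|_{C^{0,\alpha}} + \|\phi\|_{C^{2,\alpha}})$. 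This is standard Schauder theory and requires the smoothness of $\partial\Omega$.

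Finally, with these ingredients in place, the Implicit Function Theorem on Banach spaces provides neighborhoods $V \subset C^{2,\alpha}(\partial \Omega)$ of $0$ and $W \subset C^{2,\alpha}(\overline{\Omega})$ of $1$, and a smooth map $\psi \mapsto g_\psi \in W$, such that $F(g_\psi, \psi) = 0$ for all $\psi \in V$, and $g_\psi$ is the unique element of $W$ with this property. Unwinding the definition of $F$, this is exactly the claim of the theorem. There is no real obstacle here; the only point requiring care is the smoothness of $\mathcal{M}$ as a map between Hölder spaces, and for this one simply needs $W$ chosen small enough that $g$ remains close to $1$ in $C^{2,\alpha}$ and in particular $W$ stays bounded away from zero.
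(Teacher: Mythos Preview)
Your proposal is correct and follows essentially the same approach as the paper: both apply the Implicit Function Theorem using that the linearization of $\mathcal{M}$ at $g\equiv 1$ is the flat Laplacian, whose Dirichlet problem is classically an isomorphism between the relevant H\"older spaces. The only cosmetic difference is packaging---the paper first subtracts off the harmonic extension $w_\psi$ of the boundary data and works with $w\in C^{2,\alpha}_D(\overline{\Omega})$, whereas you encode the boundary condition directly in the target space $C^{0,\alpha}(\overline{\Omega})\times C^{2,\alpha}(\partial\Omega)$; these are equivalent formulations of the same argument.
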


\begin{proof}
Define $\mathcal{N}: C^{2,\alpha}(\partial \Omega) \times C^{2,\alpha}_D(\overline{\Omega}) \rightarrow C^{0,\alpha}(\overline{\Omega})$ 
by
\[ \mathcal{N}(\psi,w) := \mathcal{M}(1 + w_{\psi} + w), \]
where $w_{\psi}$ is the unique harmonic function on $\Omega$ that equals $\psi$ on $\partial \Omega$.  Now $\mathcal{N}(0,0) = \mathcal{M}(1) = 1$ and, as above, the linearisation of $\mathcal{N}$ (in $w$) about this solution is the Laplace operator
\[ D_w\mathcal{N} = \triangle : C^{2,\alpha}_D(\overline{\Omega}) \rightarrow C^{0,\alpha}(\overline{\Omega}). \]
It's a classical result that this last mapping is an isomorphism (\cite{GnT}), so the theorem follows from the Implicit Function Theorem. \qedhere
\end{proof}




\newpage

\end{document}